\newtheorem{theorem}{Theorem}[section]
\newtheorem{lemma}[theorem]{Lemma}
\newtheorem{sublemma}{}[theorem]
\newtheorem{corollary}[theorem]{Corollary}
\newtheorem{prop}[theorem]{Proposition}
\theoremstyle{definition}
\newtheorem{dfn}[theorem]{Definition}
\newcommand{\mcal}[1]{\ensuremath{\mathcal{#1}}}
\newcommand{\del}{\backslash}
\newcommand{\iso}{\cong}
\newcommand{\la}{\lambda}
\newcommand{\dash}{\nobreakdash-\hspace{0pt}}
\DeclareMathOperator{\si}{si}
\DeclareMathOperator{\co}{co}
\DeclareMathOperator{\cl}{cl}
\DeclareMathOperator{\ra}{r}
\begin{document}

\title[Contracting cocircuit elements]
{Contracting an element from a cocircuit.}

\author{Rhiannon Hall}

\address{School of Information Systems, Computing and Mathematics,
Brunel University, Uxbridge UB8 3PH, United Kingdom}

\email{rhiannon.hall@brunel.ac.uk}

\thanks{The research of the first author was supported by a
Nuffield Foundation Award for Newly Appointed Lecturers in Science,
Engineering and Mathematics}

\author{Dillon Mayhew}
\address{School of Mathematics, Statistics and Computer Science,
Victoria University of Wellington, P.O. BOX 600,
Wellington, New Zealand}

\email{dillon.mayhew@mcs.vuw.ac.nz}

\thanks{The research of the second author was supported by
a NZ Science \& Technology Postdoctoral Fellowship.}

\date{\today}

\subjclass{05B35}

\keywords{matroid, 3-connected, cocircuit, minor, splitter}

\begin{abstract}
We consider the situation that $M$ and $N$ are $3$\dash connected
matroids such that $|E(N)| \geq 4$ and $C^{*}$ is a cocircuit of $M$
with the property that $M / x_{0}$ has an $N$\dash minor for some
$x_{0} \in C^{*}$.
We show that either there is an element $x \in C^{*}$ such that
$\si(M / x)$ or $\co(\si(M / x))$ is $3$\dash connected
with an $N$\dash minor, or there is a four-element fan of $M$
that contains two elements of $C^{*}$ and an element $x$ such that
$\si(M / x)$ is $3$\dash connected with an $N$\dash minor.
\end{abstract}

\maketitle

\section{Introduction}
\label{intro}

There are a number of tools in matroid theory that tell us when
we can remove an element or elements from a matroid, while
maintaining both the presence of a minor and a certain type
of connectivity.
Some recent results are of this type, but have the
additional restriction that the element(s) must have a certain
relation to a given substructure in the matroid.
For example, Oxley, Semple, and Whittle~\cite{basis}, consider
a given basis of a matroid and consider either contracting elements
that are in the basis, or deleting elements that are not in
the basis.
Hall~\cite{hyperplane} has investigated when it is possible to
contract an element from a given hyperplane in a $3$\dash connected
matroid and remain $3$\dash connected (up to parallel pairs).

We make a contribution to this collection of tools by investigating
the circumstances under which we can contract an element from
a cocircuit while maintaining both the presence of a minor
and $3$\dash connectivity (up to parallel pairs), and the
structures which prevent us from doing so.
Our result has been employed by Geelen, Gerards, and
Whittle~\cite{GGW} in their characterization of when three
elements in a matroid lie in a common circuit.

\begin{theorem}
\label{thm1}
Suppose that $M$ and $N$ are $3$\dash connected matroids such that
$|E(N)| \geq 4$ and $C^{*}$ is a cocircuit of $M$ with the property
that $M / x_{0}$ has an $N$\dash minor for some $x_{0} \in C^{*}$.
Then either:
\begin{enumerate}[(i)]
\item there is an element $x \in C^{*}$ such that $\si(M / x)$ is
$3$\dash connected and has an $N$\dash minor;
\item there is an element $x \in C^{*}$ such that $\co(\si(M / x))$ is
$3$\dash connected and has an $N$\dash minor; or,
\item there is a sequence of elements $(x_{1},\, x_{2},\, x_{3},\, x_{4})$
from $E(M)$ such that $\{x_{1},\, x_{2},\, x_{3}\}$ is a
circuit, $\{x_{2},\, x_{3},\, x_{4}\}$ is a cocircuit,
$x_{1},\, x_{3} \in C^{*}$, and $\si(M / x_{2})$ is $3$\dash connected
with an $N$\dash minor.
\end{enumerate}
\end{theorem}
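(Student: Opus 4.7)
The plan is to fix an element $x_0 \in C^{*}$ for which $M / x_0$ has an $N$\dash minor and, among all such candidates, to choose $x \in C^{*}$ extremising an appropriate complexity measure (for instance, minimising the size or complexity of the 2\dash separations of $\si(M / x)$). If $\si(M / x)$ is 3\dash connected then conclusion (i) holds immediately, so we may assume this fails and must analyse the obstructions systematically.

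For such an $x$, a 2\dash separation $(A, B)$ of $\si(M / x)$ pulls back to a partition $(A', B')$ of $E(M) \setminus \{x\}$ satisfying $\lambda_{M}(A' \cup \{x\}) \geq 2$ but $\lambda_{M/x}(A') \leq 1$, so the separation is ``blocked'' by $x$ together with its parallel mates in $M / x$. This forces $x$ to lie in a triangle $T$ of $M$, and orthogonality of $T$ with the cocircuit $C^{*}$ constrains which elements of $T$ can belong to $C^{*}$. We then take cosimplifications: if $\co(\si(M / x))$ is 3\dash connected with an $N$\dash minor, conclusion (ii) holds. Otherwise $M / x$ has a nontrivial series class, yielding a triad $T^{*}$ of $M$ that meets $T$, and cocircuit elimination between $C^{*}$ and $T^{*}$ propagates membership information about $C^{*}$ to the new triad.

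The main obstacle is assembling this triangle-triad data into the desired four\dash element fan $(x_1, x_2, x_3, x_4)$ with $x_1, x_3 \in C^{*}$ and $\si(M / x_2)$ being 3\dash connected with an $N$\dash minor. Two points are delicate. First, one must show that orthogonality and cocircuit elimination place the correct pair of fan elements inside $C^{*}$, namely the triangle end $x_1$ and the shared element $x_3$, rather than the triad end $x_4$; this is where the fine structure of the interaction between $T$, $T^{*}$, and $C^{*}$ is used. Second, one must verify that contracting the \emph{middle} element $x_2$ of the fan, rather than the original $x$, actually yields 3\dash connectivity together with an $N$\dash minor. The extremal choice of $x$ is what forces this last step to succeed: if $\si(M / x_2)$ were still not 3\dash connected or lost the $N$\dash minor, the fan structure would allow us to exchange $x$ for a genuinely simpler candidate in $C^{*}$, contradicting minimality and so closing the argument.
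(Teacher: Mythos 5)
Your sketch diverges from the paper's route (the paper deduces Theorem~\ref{thm1} from the much more detailed Theorem~\ref{main}, whose proof runs through vertical $3$\dash partitions and a carefully defined notion of ``minimal partition''), so the question is whether your alternative is sound. It is not, for two main reasons.

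First, the claim that a $2$\dash separation of $\si(M/x)$ ``forces $x$ to lie in a triangle $T$ of $M$'' is false. If $\si(M/x)$ is not $3$\dash connected, what one gets is a vertical $3$\dash partition $(X_{1},\, X_{2},\, x)$ of $M$ (the paper's Proposition~\ref{contr2}); both sides have rank at least $3$, and there is no reason for $M/x$ to have any nontrivial parallel class at all. So $x$ need not lie in any triangle, and the whole triangle/triad bootstrap that follows has no foundation. The genuine structure one must analyse is the vertical $3$\dash partition, not a triangle.

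Second, even where triangles and triads do appear, your proposal collapses two quite different obstructions into one. The paper identifies, in addition to the four-element fan of outcome~(iii), the segment-cosegment pair structure, which is precisely what makes outcome~(ii) unavoidable (see the $M(K_{5}\del e)$ example and the $\Theta_{r}$ family in the Introduction). In a segment-cosegment pair with segment $L\subseteq C^{*}$, $\si(M/x)$ fails to be $3$\dash connected for \emph{every} $x\in L$, and there is no fan whose middle element contraction rescues things; rather $\co(\si(M/x))\iso M/\cl(L)$ is what is $3$\dash connected (Proposition~\ref{prop4}). Your extremal choice of $x$ cannot make this structure disappear; a different argument is needed to show that here outcome~(ii) holds. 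Relatedly, the assertion that minimality of $x$ forces $\si(M/x_{2})$ to be $3$\dash connected with an $N$\dash minor in the fan case is unsupported: you would at least need an analogue of Lemma~\ref{smallside} to control where the $N$\dash minor lives after moving from $x$ to $x_{2}$, and you would need to rule out the segment-cosegment case where that contraction still fails. As written, the complexity measure is never specified and the inductive exchange is not actually carried out, so the argument as proposed does not close.
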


The next example shows that statement~(ii) of
Theorem~\ref{thm1} is necessary.

\begin{figure}[htb]

\centering

\begin{tikzpicture}

\path (0,0) node(v0){};
\path (25:2.5cm) node(v1){};

\draw[thick] (v0.center) -- (v1.center) node[pos=0.5](v2){};

\path (70:2.5cm) node(v3){};

\draw[thick] (v0.center) -- (v3.center) node[pos=0.5](v4){};

\draw[thick] (v3.center) -- (v2.center);
\draw[thick] (v4.center) -- (v1.center);

\path (intersection of v3--v2 and v4--v1) node(v5){};

\path (0:2.5cm) node(v6){};

\draw[thick] (v0.center) -- (v6.center) node[pos=0.5](v7){};

\draw[thick] (v6.center) -- (v2.center);
\draw[thick] (v7.center) -- (v1.center);

\path (intersection of v6--v2 and v7--v1) node(v8){};

\foreach \x in {0,...,8}
\filldraw (v\x) circle (0.06cm);

\path (v3) node[left]{$a$};
\path (v4) node[left]{$b$};
\path (v7) +(0,-0.3) node[anchor=mid]{$c$};
\path (v6) +(0,-0.3) node[anchor=mid]{$d$};

\end{tikzpicture}
  
\caption{The graphic matroid $M(K_{5} \del e)$.}

\label{fig2}

\end{figure}

Consider the rank\dash $4$ matroid $M$ whose geometric
representation is shown in Figure~\ref{fig2}.
Note that $M \iso M(K_{5} \del e)$.
The set $C = \{a,\, b,\, c,\, d\}$ is a circuit of $M$, and
hence a cocircuit of $M^{*}$.
Moreover $M^{*} / x$ has a minor isomorphic to $M(K_{4})$
for any element $x \in C$.
However $\co(M \del x)$ is not $3$\dash connected,
as it contains a parallel pair, so $\si(M^{*} / x)$ is
not $3$\dash connected.
On the other hand $\co(\si(M^{*} / x))$ is $3$\dash connected,
and has a minor isomorphic to $M(K_{4})$.

More generally we suppose that $r$ is an integer greater than two.
Consider a basis $A = \{a_{1},\ldots, a_{r}\}$ in the projective space
$\mathrm{PG}(r - 1,\, \mathbb{R})$.
Let $l$ be a line of $\mathrm{PG}(r - 1,\, \mathbb{R})$ that is
freely placed relative to $A$, and for all $i \in \{1,\ldots, r\}$
let $b_{i}$ be the point that is in both $l$ and the hyperplane of
$\mathrm{PG}(r - 1,\, \mathbb{R})$ spanned by
$A - a_{i}$.
Let $B = \{b_{1},\ldots, b_{r}\}$.
We will use $\Theta_{r}$ to denote the restriction of
$\mathrm{PG}(r - 1,\, \mathbb{R})$ to $A \cup B$.

Suppose that $\Theta_{r}'$ is an isomorphic copy of $\Theta_{r}$
with $\{a_{1}',\ldots, a_{r}'\} \cup B$ as its ground set.
Assume also that the isomorphism from $\Theta_{r}$ to
$\Theta_{r}'$ acts as the identity on $B$ and takes $a_{i}$ to
$a_{i}'$ for all $i \in \{1,\ldots, r\}$.
Let $M$ be the generalized parallel connection of
$\Theta_{r}$ and $\Theta_{r}'$.
That is, $M$ is a matroid on the ground set $A \cup A' \cup B$
and the flats of $M$ are exactly the sets $F$ such that
$F \cap (A \cup B)$ is a flat of $\Theta_{r}$ and
$F \cap (A' \cup B)$ is a flat of $\Theta_{r}'$.
Note that if $r = 3$ then $M$ is isomorphic to $M(K_{5} \del e)$,
the matroid illustrated in Figure~\ref{fig2}.

It is easy to see that $\Theta_{r}$ is self-dual and that
$C = (A - a_{1}) \cup (A' - a_{1}')$ is a circuit of $M$, and hence a
cocircuit of $M^{*}$.
Moreover $M^{*} / x$ has an isomorphic copy of $\Theta_{r}$ as a
minor for every element $x \in C$.
We note that every three-element subset of $A$ is a circuit
of $M^{*}$.
Thus $A - x$ is a parallel class of $M^{*} / x$ for every $x \in C \cap A$.
However the simplification of $M^{*} / x$ contains a unique series pair,
and is therefore not $3$\dash connected.
On the other hand $\co(\si(M^{*} / x))$ is $3$\dash connected, and
has a minor isomorphic to $\Theta_{r}$.

The structure described in the last example has been discovered before.
The matroid $\Theta_{r}$ is a fundamental object in the
generalized $\Delta\textrm{-}Y$ operation of Oxley, Semple, and
Vertigan~\cite{oxley6}.
Furthermore this construction is an example of a `crocodile', as
described by Hall, Oxley, and Semple~\cite{sequences}.

To see that statement~(iii) of Theorem~\ref{thm1} is necessary
consider the graph $G$ shown in Figure~\ref{fig1}.
Let $C^{*}$ be the cocircuit of $M = M(G)$ comprising the
edges incident with the vertex $a$.
It is easy to see that if $x$ is any edge between $a$ and a vertex
in $\{b,\, c,\, d,\, e,\, f\}$ then $M / x$ has a minor isomorphic to
$M(K_{6})$, and that these are the only edges in $C^{*}$ with this property.
But in this case neither $\si(M / x)$ nor $\co(\si(M / x))$
is $3$\dash connected.
On the other hand, if we let $x_{1}$ be the edge $ad$, $x_{2}$
be $cd$, $x_{3}$ be $ac$, and $x_{4}$ be $bc$, then
$(x_{1},\, x_{2},\, x_{3},\, x_{4})$ is a sequence of the type described
in statement~(iii) of Theorem~\ref{thm1}.

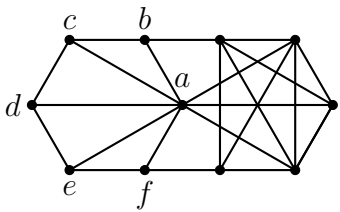
\begin{figure}[htb]

\centering

\begin{tikzpicture}

\begin{scope}
\path (0,0) node(v1){};
\foreach \x/\xtext in {60/2,120/3,180/11,240/4,300/5}
\path ++(-1,0) +(\x:1cm) node(v\xtext){};
\foreach \x/\xtext in {0/8,60/9,120/10,240/6,300/7}
\path ++(1,0) +(\x:1cm) node(v\xtext){};
\end{scope}

\foreach \x in {2,3,4,5,7,8,9,11}
\draw[thick] (v1.center) -- (v\x.center);

\foreach \x in {6,7,8}
\draw[thick] (v10.center) -- (v\x.center);

\foreach \x in {6,7}
\draw[thick] (v9.center) -- (v\x.center);

\draw[thick] (v8.center) -- (v7.center);

\draw[thick] (v2.center) -- (v3.center) -- (v11.center) --
(v4.center) -- (v5.center) -- (v6.center) -- (v7.center) --
(v8.center) -- (v9.center) -- (v10.center) -- cycle;

\foreach \x in {1,...,11}
\filldraw (v\x.center) circle (0.06cm);

\path (v1) node[above=0.1cm]{$a$};
\path (v2) node[above]{$b$};
\path (v3) node[above]{$c$};
\path (v11) node[left]{$d$};
\path (v4) node[below]{$e$};
\path (v5) node[below]{$f$};

\end{tikzpicture}

\caption{The graph $G$.}

\label{fig1}

\end{figure}

Our main result shows that there are essentially only two
structures that prevent us from finding an element $x \in C^{*}$
such that $\si(M / x)$ is $3$\dash connected with an $N$\dash minor.
These structures are named `segment-cosegment pairs' and
`four-element fans'.
The dual of the matroid in Figure~\ref{fig2} contains a
segment-cosegment pair, and the graph in Figure~\ref{fig1}
contains a four-element fan.
Before describing our result in detail we fix some terminology.
Suppose that $M$ is a matroid.
Recall that a \emph{triangle} of $M$ is a three-element circuit,
and a \emph{triad} is a three-element cocircuit.
A \emph{four-element fan} of $M$ is a sequence
$(x_{1},\, x_{2},\, x_{3},\, x_{4})$ of distinct elements from
$E(M)$ such that $\{x_{1},\, x_{2},\, x_{3}\}$ is a triangle and
$\{x_{2},\, x_{3},\, x_{4}\}$ is a triad.
A \emph{segment} of $M$ is a set $L$ such that $|L| \geq 3$ and
every three-element subset of $M$ is a triangle, and a \emph{cosegment}
of $M$ is a segment of $M^{*}$.
We say that $(L,\, L^{*})$ is a \emph{segment-cosegment pair} if
$L = \{x_{1},\ldots, x_{t}\}$ is a segment of $M$, and
$L^{*} = \{y_{1},\ldots, y_{t}\}$ is a set such that
$L \cap L^{*} = \emptyset$ and for every $x_{i} \in L$ the set
$(\cl(L) - x_{i}) \cup y_{i}$ is a cocircuit.
Segment-cosegment pairs will be considered in detail in
Section~\ref{segments}.
A \emph{spore} is a pair $(P,\, s)$ such that
$P$ is a rank\dash one flat, and $P \cup s$ is a cocircuit.
A matroid $M$ is \emph{$3$\dash connected up to a unique spore}
if $M$ contains a single spore $(P,\, s)$, and whenever
$(X,\, Y)$ is a $k$\dash separation of $M$ for some $k < 3$
then either $X \subseteq P \cup s$ or $Y \subseteq P \cup s$.
Theorem~\ref{thm1} follows from the next result.
It gives a more detailed analysis of the structures
we encounter.

\begin{theorem}
\label{main}
Suppose that $M$ and $N$ are $3$\dash connected matroids such that
$|E(N)| \geq 4$ and $C^{*}$ is a cocircuit of $M$ with the property
that $M / x_{0}$ has an $N$\dash minor for some $x_{0} \in C^{*}$.
Then either:
\begin{enumerate}[(i)]
\item there is an element $x \in C^{*}$ such that $\si(M / x)$ is
$3$\dash connected and has an $N$\dash minor;
\item there is a four-element fan $(x_{1},\, x_{2},\, x_{3},\, x_{4})$
of $M$ such that $x_{1},\, x_{3} \in C^{*}$, and $\si(M / x_{2})$ is
$3$\dash connected with an $N$\dash minor;
\item there is a segment-cosegment pair $(L,\, L^{*})$ such that
$L \subseteq C^{*}$, and $\cl(L) - L$ contains a single element $e$.
In this case $e \notin C^{*}$ and $\si(M / e)$ is $3$\dash connected
with an $N$\dash minor.
Moreover $M / \cl(L)$ is $3$\dash connected with an $N$\dash minor,
and if $x_{i} \in L$ then $M / x_{i}$ is $3$\dash connected up
to a unique spore $(\cl(L) - x_{i},\, y_{i})$; or,
\item there is a segment-cosegment pair $(L,\, L^{*})$ such that
$L$ is a flat and $|L - C^{*}| \leq 1$.
In this case $M / L$ is $3$\dash connected with an $N$\dash minor,
and if $x_{i} \in L$ then $M / x_{i}$ is $3$\dash connected up to
a unique spore $(L - x_{i},\, y_{i})$.
\end{enumerate}
\end{theorem}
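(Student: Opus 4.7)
\emph{Proof proposal.} The strategy is to assume that outcome (i) fails, extract a triangle--triad pair through a well-chosen contraction element, and then either close off with a four-element fan (giving outcome (ii)) or grow a full segment-cosegment pair (giving outcomes (iii) or (iv)). To begin, pick $x \in C^{*}$ with $M/x$ having an $N$-minor; since (i) fails, $\si(M/x)$ is not $3$-connected, so $M/x$ either contains a triad of $M$ through $x$ (from a series pair of $\si(M/x)$) or admits a vertical $2$-separation. In either subcase, standard $3$-connectivity arguments together with orthogonality between $C^{*}$ and any circuit through $x$ produce a triangle $T$ of $M$ meeting $C^{*}$ in at least two elements and a triad $T^{*}$ of $M$ with $|T \cap T^{*}| = 2$; after relabelling this is a four-element fan $(x_{1},\, x_{2},\, x_{3},\, x_{4})$ with $x_{1},\, x_{3} \in C^{*}$.

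With the fan in hand, I would try to contract the middle element $x_{2}$. The triangle $\{x_{1},\, x_{2},\, x_{3}\}$ provides a parallel-exchange relation that transfers the $N$-minor from $M/x_{1}$ to $M/x_{2}$, so $M/x_{2}$ also has an $N$-minor. If $\si(M/x_{2})$ is $3$-connected we obtain outcome (ii). Otherwise, the same analysis applied now to $x_{2}$ produces a further triad of $M$ which, combined with orthogonality and the existing triangle, yields a new cocircuit of the segment-cosegment form $(\cl(T) - x_{i}) \cup y_{i}$. Iterating, and using the basic properties of segment-cosegment pairs developed in Section~\ref{segments}, I would arrive at a maximal segment $L \subseteq C^{*}$ with a matching cosegment $L^{*} = \{y_{1},\, \ldots,\, y_{t}\}$ so that $(L,\, L^{*})$ is a segment-cosegment pair.

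To finish, I would split on whether $\cl(L) = L$ or not. If $\cl(L) = L$, orthogonality between $C^{*}$ and the cocircuits $(L - x_{i}) \cup y_{i}$ forces $|L - C^{*}| \leq 1$, so outcome (iv) applies; the claim that $M/L$ is $3$-connected with an $N$-minor follows from the spore characterisation of the matroids $M/x_{i}$ via a standard contraction argument. If $\cl(L) \neq L$, then every element of $\cl(L) - L$ lies in each cocircuit $(\cl(L) - x_{i}) \cup y_{i}$, and an orthogonality count forces $\cl(L) - L$ to be a single element $e$; moreover $e \notin C^{*}$ by orthogonality with any triad of $(L,\, L^{*})$, and both $\si(M/e)$ and $M/\cl(L)$ inherit $3$-connectivity and the $N$-minor from the same exchange relations, giving outcome (iii).

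The main obstacle I anticipate is the inductive growth step: at each stage one must verify that the failure of case (ii) for the current ``middle'' element produces a new triangle and a new triad that match the segment-cosegment pattern, rather than some unrelated $3$-separation. The structural results on segment-cosegment pairs in Section~\ref{segments} should do the heavy lifting here. A second delicate point is the conclusion that each $M/x_{i}$ is $3$-connected up to a unique spore: ruling out additional $2$-separations of $M/x_{i}$ beyond the obvious parallel class $\cl(L) - x_{i}$ (respectively $L - x_{i}$) requires exploiting the maximality of $L$ together with the $3$-connectivity of $M$, and this is likely where most of the technical bookkeeping will lie.
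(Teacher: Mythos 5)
Your high-level dichotomy (escape into a four-element fan, otherwise grow a segment-cosegment pair) matches the shape of the paper's argument, and you have correctly flagged where the hard work lies. However, there are two genuine gaps in the route you propose, and they are not merely bookkeeping.

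First, the step from ``$\si(M/x)$ is not $3$-connected'' to ``a triangle $T$ meeting $C^{*}$ in two elements together with a triad $T^{*}$ with $|T\cap T^{*}|=2$'' is not a standard orthogonality argument, and in fact it is false as stated: a vertical $3$-partition $(Z_{1},Z_{2},x)$ can have both sides of large rank with no triangle or triad anywhere near $x$. What the paper actually does is take such a $(Z_{1},Z_{2},x_{0})$, pass to a \emph{minimal} partition $(X_{1},X_{2},x)$ with respect to $C^{*}$ (Lemma~\ref{minimal}), and then cross this with a second vertical $3$-partition $(S_{1},S_{2},s)$ centred at another element $s\in C^{*}\cap X_{1}$. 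The crossing analysis (Lemma~\ref{crossetc} and Lemma~\ref{bigstep}) is what forces $|X_{1}\cap S_{2}|=1$ and hence that $(X_{1}\cap S_{1})\cup\{s,x\}$ is a segment; the fan only appears in the degenerate sub-case $|X_{1}\cap S_{1}|=1$. Without minimality and the crossing lemma there is nothing to pin the structure down to a low-rank side, and your ``either subcase'' dichotomy does not go through.

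Second, your mechanism for transferring the $N$-minor from $M/x_{1}$ to $M/x_{2}$ via a ``parallel-exchange relation'' on the triangle is not how the paper propagates the minor and, on its own, it is not justified: a triangle does not in general give $M/x_{1}\cong M/x_{2}$ on overlapping ground sets, and an $N$-minor of $M/x_{1}$ may use $x_{2}$ and $x_{3}$ in a way that obstructs a naive exchange. The paper instead invokes Lemma~\ref{smallside}, which decomposes $M/x_{0}$ as a $2$-sum across the vertical $3$-partition and shows that contracting \emph{any} element of the small side $Z_{1}-\cl(Z_{2})$ preserves the $N$-minor; since the minimal partition is arranged to satisfy $X_{1}\subseteq Z_{1}-\cl(Z_{2})$, every candidate contraction element automatically keeps $N$. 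You would need some substitute for this $2$-sum argument for your proposal to close.

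The later parts of your sketch -- the split on $\cl(L)=L$ versus $\cl(L)\neq L$, the orthogonality count forcing $|\cl(L)-L|\leq 1$, the use of the spore lemma -- are consistent with what the paper does, but they all sit downstream of the segment construction, which is precisely the part you have not established.
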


We note that if $(L,\, L^{*})$ is a segment-cosegment
pair of the matroid $M$, and $M / \cl(L)$ has an $N$\dash minor,
then $|E(M) - \cl(L)| \geq 4$.
Under these hypotheses Proposition~\ref{prop4} tells us that
$M / \cl(L)$ is isomorphic to $\co(\si(M / x_{i}))$ for any element
$x_{i} \in L$.
Therefore Theorem~\ref{thm1} does indeed follow from
Theorem~\ref{main}.

By dualizing we immediately obtain the following corollary of
Theorem~\ref{thm1}.

\begin{theorem}
\label{thm2}
Suppose that $M$ and $N$ are $3$\dash connected matroids such that
$|E(N)| \geq 4$ and $C$ is a circuit of $M$ with the property
that $M \del x_{0}$ has an $N$\dash minor for some $x_{0} \in C$.
Then either:
\begin{enumerate}[(i)]
\item there is an element $x \in C$ such that $\co(M \del x)$ is
$3$\dash connected and has an $N$\dash minor;
\item there is an element $x \in C$ such that $\si(\co(M \del x))$ is
$3$\dash connected and has an $N$\dash minor; or,
\item there is a four-element fan $(x_{1},\, x_{2},\, x_{3},\, x_{4})$
in $M$ such that $x_{2},\, x_{4} \in C$, and $\co(M \del x_{3})$ is
$3$\dash connected with an $N$\dash minor.
\end{enumerate}
\end{theorem}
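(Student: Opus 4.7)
The plan is to obtain Theorem~\ref{thm2} as a direct corollary of Theorem~\ref{thm1} by passing to duals. I would begin by verifying that $M^{*}$, $N^{*}$, and $C$ satisfy the hypotheses of Theorem~\ref{thm1}: the matroids $M^{*}$ and $N^{*}$ are $3$\dash connected because $3$\dash connectivity is self-dual, $|E(N^{*})| = |E(N)| \geq 4$, the circuit $C$ of $M$ is by definition a cocircuit of $M^{*}$, and the hypothesis that $M \del x_{0}$ has an $N$\dash minor is equivalent to $(M \del x_{0})^{*} = M^{*} / x_{0}$ having an $N^{*}$\dash minor.

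Having satisfied these hypotheses, Theorem~\ref{thm1} applied to $M^{*}$, $N^{*}$, and $C$ returns one of its three outcomes in $M^{*}$, which I would translate back using the standard identifications $\si(M^{*} / x) \iso (\co(M \del x))^{*}$ and $\co(\si(M^{*} / x)) \iso (\si(\co(M \del x)))^{*}$. Since $3$\dash connectivity is self-dual and an $N^{*}$\dash minor of the dual corresponds to an $N$\dash minor of the primal, outcomes (i) and (ii) of Theorem~\ref{thm1} applied to $M^{*}$ become outcomes (i) and (ii) of Theorem~\ref{thm2} applied to $M$.

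The only bookkeeping that requires any care is the four-element fan in outcome (iii). In $M^{*}$ one obtains a sequence $(x_{1},\, x_{2},\, x_{3},\, x_{4})$ for which $\{x_{1},\, x_{2},\, x_{3}\}$ is a triangle and $\{x_{2},\, x_{3},\, x_{4}\}$ is a triad, so in $M$ the set $\{x_{1},\, x_{2},\, x_{3}\}$ is a triad and $\{x_{2},\, x_{3},\, x_{4}\}$ is a triangle. Reversing, $(x_{4},\, x_{3},\, x_{2},\, x_{1})$ is a four-element fan of $M$ in the sense of the paper; relabelling as $(y_{1},\, y_{2},\, y_{3},\, y_{4})$, the conditions $x_{1},\, x_{3} \in C$ and that $\si(M^{*} / x_{2})$ is $3$\dash connected with an $N^{*}$\dash minor become $y_{4},\, y_{2} \in C$ and that $\co(M \del y_{3})$ is $3$\dash connected with an $N$\dash minor, which is precisely outcome (iii) of Theorem~\ref{thm2}. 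There is no genuine obstacle here; the argument is a routine dualization, and indeed the excerpt itself announces Theorem~\ref{thm2} as an immediate corollary obtained by dualizing.
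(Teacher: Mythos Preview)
Your proposal is correct and matches the paper's approach exactly: the paper simply states that Theorem~\ref{thm2} is obtained from Theorem~\ref{thm1} by dualizing, and you have carried out that dualization carefully, including the relabelling of the four-element fan.
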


We note that Lemos~\cite{lemos2} has considered the situation that a
$3$\dash connected matroid $M$ contains a circuit $C$ with
the property that $M \del x$ is not $3$\dash connected
for any element $x \in C$.
He shows that in this case $C$ meets at least two triads of
$M$.

In Section~\ref{essentials} we introduce essential notions of
matroid connectivity.
Section~\ref{segments} contains a detailed discussion of one
of the structures we uncover: segment-cosegment pairs.
In Section~\ref{prelim} we collect some preliminary lemmas, and in
Section~\ref{mainproof} we complete the proof of Theorem~\ref{main}.
Notation and terminology generally follows that of Oxley~\cite{oxley},
except that the simple (respectively cosimple) matroid associated with
the matroid $M$ is denoted $\si(M)$ (respectively $\co(M)$).
We consistently write $z$ instead of $\{z\}$ for the set containing
the single element $z$.

\section{Essentials}
\label{essentials}

This section collects some elementary results on matroid
connectivity.
Let $M$ be a matroid on the ground set $E$.
The \emph{connectivity function} of $M$, denoted by
$\la_{M}$ (or $\la$ when there is no ambiguity), takes subsets of
$E$ to $\mathbb{Z}^{+} \cup \{0\}$.
It is defined so that
\begin{displaymath}
\la_{M}(X)= \ra_{M}(X) + \ra_{M}(E - X) - \ra(M)
\end{displaymath}
for any subset $X \subseteq E$.
Note that $\la(X) = \la(E - X)$ and
$\la_{M^{*}}(X) = \la_{M}(X)$ for any subset $X \subseteq E$.
It is well known, and easy to verify, that the connectivity
function of $M$ is submodular.
That is, for all $X,\, Y \subseteq E$, the inequality
\begin{displaymath}
\la(X \cap Y) + \la(X \cup Y) \leq \la(X) + \la(Y)
\end{displaymath}
is satisfied.

We say that a subset $X \subseteq E$ is
\emph{$k$\dash separating} or a \emph{$k$\dash separator} of $M$ if
$\la(X) < k$, and we say that a partition
$(X,\, E - X)$ is a \emph{$k$\dash separation} of $M$ if $X$ is
$k$\dash separating and $|X|,\, |E - X| \geq k$.
A $k$\dash separator $X$ or a $k$\dash separation
$(X,\, E - X)$ is \emph{exact} if $\la(X) = k-1$.
A matroid $M$ is \emph{$n$\dash connected} if $M$ has no
$k$\dash separation for any $k < n$.
We define a \emph{$k$\dash partition} of $M$ to be a
partition $(X_{1},\, X_{2},\dots, X_{n})$ of $E$ such that
$X_{i}$ is $k$\dash separating for all $1 \leq i \leq n$.
We say that the $k$\dash partition $(X_{1},\, X_{2},\dots, X_{n})$
is \emph{exact} if each $k$\dash separator $X_{i}$ is exact.

The next result is easy.

\begin{prop}
\label{prop1}
Let $N$ be a minor of the matroid $M$ and let $X$ be a
subset of $E(M)$.
Then $\la_{N}(E(N) \cap X) \leq \la_{M}(X)$.
\end{prop}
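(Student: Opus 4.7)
The plan is to induct on $|E(M) - E(N)|$, reducing to the case where $N$ is obtained from $M$ by a single deletion or contraction. The inductive step is immediate: if $N'$ is an intermediate minor obtained from $M$ by removing one element and containing $N$ as a minor, then the inductive hypothesis gives $\la_N(E(N) \cap X) \leq \la_{N'}(E(N') \cap X)$, and the remaining inequality $\la_{N'}(E(N') \cap X) \leq \la_M(X)$ is exactly the proposition for a single-element minor.

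Since $\la_M = \la_{M^{*}}$ and $M / e = (M^{*} \del e)^{*}$, the single-contraction case follows from the single-deletion case applied to $M^{*}$. So the whole argument reduces to showing that
\begin{displaymath}
\la_{M \del e}((E(M) - e) \cap X) \leq \la_M(X)
\end{displaymath}
for every element $e \in E(M)$ and every $X \subseteq E(M)$.

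Set $Y = (E(M) - e) \cap X$ and split into the cases $e \in X$ and $e \notin X$. In either case, after expanding the definition of $\la_{M \del e}(Y)$ and writing $r_{M \del e}$ in terms of $r_M$, the inequality reduces to the single observation that for any set $Z \subseteq E(M)$ with $e \in Z$, the difference $r_M(Z) - r_M(Z - e)$ is at least $r(M) - r(M \del e)$. This is trivial unless $e$ is a coloop of $M$, and when $e$ is a coloop both differences are forced to equal $1$.

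No real obstacle arises; the proof is a routine case analysis of how a single deletion affects the rank function, combined with matroid duality to handle contraction. I would expect the whole argument to fit in a few lines.
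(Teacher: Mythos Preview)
Your argument is correct. The induction reduces cleanly to a single deletion, and the key inequality $r_M(Z) - r_M(Z - e) \geq r(M) - r(M \del e)$ for $e \in Z$ is exactly what is needed in both cases; the duality step handling contraction is also sound, since $\la_{M/e} = \la_{(M^{*} \del e)^{*}} = \la_{M^{*} \del e}$ and $\la_{M} = \la_{M^{*}}$.

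As for comparison with the paper: the paper does not actually give a proof of this proposition. It is stated with the remark ``The next result is easy'' and left to the reader. Your write-up is a perfectly standard way to fill in the omitted details, and its length is appropriate for what the authors evidently regarded as routine.
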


\begin{prop}
\label{guts}
Suppose that $M$ is a matroid and that $(X,\, Y,\, z)$ is a partition
of $E(M)$.
If $\la(X) = \la(Y)$ then $z$ is in $\cl(X) \cap \cl(Y)$ or in
$\cl^{*}(X) \cap \cl^{*}(Y)$, but not both.
\end{prop}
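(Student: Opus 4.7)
The plan is to expand the definition of $\la$ to turn the hypothesis $\la(X) = \la(Y)$ into a single rank identity, and then read off the closure/coclosure conditions from it. Since $E(M) = X \cup Y \cup z$ is a partition, we have $E - X = Y \cup z$ and $E - Y = X \cup z$, so the identity $\la(X) = \la(Y)$ reduces immediately, after canceling $\ra(M)$, to
\begin{displaymath}
\ra(X) + \ra(Y \cup z) = \ra(Y) + \ra(X \cup z).
\end{displaymath}

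Next I would introduce the two rank increments $a = \ra(X \cup z) - \ra(X)$ and $b = \ra(Y \cup z) - \ra(Y)$, each of which lies in $\{0,\, 1\}$. The displayed equation becomes simply $a = b$, so either both increments vanish or both equal one. Since $z \in \cl(X)$ is equivalent to $a = 0$ and $z \in \cl(Y)$ is equivalent to $b = 0$, this already tells us that $z$ is either in both closures or in neither.

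The third step is to translate the coclosure conditions into the same language. Using the formula $\ra^{*}(A) = |A| + \ra(E - A) - \ra(M)$ for the dual rank function, a short calculation gives $\ra^{*}(X \cup z) - \ra^{*}(X) = 1 - b$, so $z \in \cl^{*}(X)$ if and only if $b = 1$; symmetrically $z \in \cl^{*}(Y)$ if and only if $a = 1$. Combining this with $a = b$ produces exactly two cases: when $a = b = 0$ we have $z \in \cl(X) \cap \cl(Y)$ and $z \notin \cl^{*}(X) \cup \cl^{*}(Y)$, and when $a = b = 1$ we have $z \in \cl^{*}(X) \cap \cl^{*}(Y)$ and $z \notin \cl(X) \cup \cl(Y)$. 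In both cases exactly one of the two alternatives of the proposition holds.

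There is no real obstacle here; the statement is essentially a bookkeeping exercise in the definitions of $\la$, $\cl$, and $\cl^{*}$, and the whole argument is driven by the single rank identity above together with the dual-rank formula. The only thing to be careful about is verifying that $a$ and $b$ are genuinely $\{0,\,1\}$-valued (which follows because adjoining a single element to a set can raise the rank by at most one), so that the dichotomy $a = b = 0$ versus $a = b = 1$ is exhaustive.
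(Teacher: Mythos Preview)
Your proof is correct and follows essentially the same approach as the paper: both derive the rank identity $\ra(X) + \ra(Y \cup z) = \ra(Y) + \ra(X \cup z)$ from $\la(X) = \la(Y)$ and then use the dual-rank formula to pass between closure and coclosure conditions. Your introduction of the increments $a$ and $b$ is a slightly more systematic bookkeeping device, but the underlying computations are identical.
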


\begin{proof}
Since
\begin{displaymath}
\la(X) = \ra(X) +\ra(Y \cup z) - \ra(M)
= \ra(X \cup z) + \ra(Y) - \ra(M) = \la(Y)
\end{displaymath}
it follows that $\ra(Y \cup z) - \ra(Y) = \ra(X \cup z) - \ra(X)$.
Therefore, $z \in \cl(X)$ if and only if $z \in \cl(Y)$.
In the case that $z \notin \cl(X)$ and $z \notin \cl(Y)$ then
\begin{multline*}
\ra^{*}(Y \cup z) - \ra^{*}(Y) = (|Y \cup z| + \ra(X) - \ra(M))\\
- (|Y| + \ra(X \cup z) - \ra(M)) = 1 + \ra(X) - \ra(X \cup z) = 0.
\end{multline*}
Thus $z \in \cl^{*}(Y)$.
The same argument shows that $z \in \cl^{*}(X)$.

Finally we note that $z \in \cl^{*}(X)$ if and only if
$z \notin \cl(Y)$.
Thus $\cl(X) \cap \cl(Y)$ and $\cl^{*}(X) \cap \cl^{*}(Y)$
are disjoint.
\end{proof}

The next result is well known, and follows without
difficulty from the dual of~\cite[Lemma~2.5]{flowers}.

\begin{prop}
\label{cosegment}
Suppose that $X$ is an exactly $3$\dash separating set of
the $3$\dash connected matroid $M$.
Suppose also that $A \subseteq E(M) - X$.
If $|A| \geq 3$ and $A \subseteq \cl^{*}(X)$ then $A$ is a
cosegment of $M$.
\end{prop}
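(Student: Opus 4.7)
The plan is to reduce immediately to the case $|A| = 3$ and then exhibit $A$ as a cocircuit of $M$. The reduction is free: being a cosegment is defined by the condition that every three-element subset is a triad, and any three-element $A_{0} \subseteq A$ still lies in $\cl^{*}(X) - X$, so a proof for the three-element case handles everything.

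So assume $A = \{a_{1}, a_{2}, a_{3}\}$. The first step is to translate the coclosure hypothesis into primal rank language. Using $\ra^{*}(S) = |S| + \ra(E(M) - S) - \ra(M)$, the equality $\ra^{*}(X \cup A) = \ra^{*}(X)$ becomes $\ra(E(M) - X - A) = \ra(E(M) - X) - 3$. Combining with the hypothesis $\la(X) = \ra(X) + \ra(E(M) - X) - \ra(M) = 2$ yields $\ra(X) + \ra(E(M) - X - A) = \ra(M) - 1$.

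The set $E(M) - A$ is the disjoint union of $X$ and $E(M) - X - A$, so submodularity of the rank function gives
\begin{equation*}
\ra(E(M) - A) \leq \ra(X) + \ra(E(M) - X - A) = \ra(M) - 1.
\end{equation*}
Hence $E(M) - A$ fails to span $M$, and $A$ must contain a cocircuit. Since $\la(X) = 2$ forces $X \neq \emptyset$, one has $|E(M)| \geq |X| + |A| \geq 4$, and in a $3$-connected matroid with at least four elements every cocircuit has size at least three (a smaller cocircuit would produce a $1$- or $2$-separation). The cocircuit contained in $A$ must therefore equal $A$ itself, so $A$ is a triad.

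No individual step presents a serious obstacle; the whole argument is a short submodularity calculation once one recognizes that $A \subseteq \cl^{*}(X)$ pins down the rank of $E(M) - X - A$.
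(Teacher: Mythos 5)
Your argument is correct. Both the reduction to the $|A|=3$ case and the rank computation check out: the identity $\ra^{*}(S) = |S| + \ra(E(M) - S) - \ra(M)$ turns $\ra^{*}(X \cup A) = \ra^{*}(X)$ into $\ra(E(M) - X - A) = \ra(E(M) - X) - 3$, and combining this with $\la(X) = 2$ and subadditivity of rank gives $\ra(E(M) - A) \leq \ra(M) - 1$, so $E(M) - A$ lies in a hyperplane and its complement $A$ contains a cocircuit; $3$\dash connectivity together with $|E(M)| \geq 4$ (which holds because $X \neq \emptyset$) then forces that cocircuit to be all of $A$.

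The paper does not supply its own proof of this proposition; it merely records it as well known and refers the reader to the dual of Lemma~2.5 of Oxley, Semple, and Whittle's paper on the structure of $3$\dash separations. So there is no internal argument to compare against step by step. Your version is a short, self-contained first-principles argument, which is arguably preferable to a pointer into another paper: it exposes exactly which facts are being used (the rank translation of coclosure, submodularity, and the absence of small cocircuits in a $3$\dash connected matroid of size at least four) and requires nothing from the cited reference. The only thing worth tightening in the write-up is the phrase ``submodularity of the rank function'' for what is really subadditivity on disjoint sets (a special case), and perhaps to spell out that $\ra(E(M)-A) < \ra(M)$ puts $E(M)-A$ inside a hyperplane whose complementary cocircuit sits inside $A$; both are cosmetic.
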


\begin{dfn}
\label{dfn1}
Suppose that $M$ is a matroid and that $x \in E(M)$.
Let $(X_{1},\, X_{2})$ be a partition of $E(M) - x$ such that
there is a positive integer $k$ with the property that:
\begin{enumerate}[(i)]
\item $\la(X_{1}) = \la(X_{2}) = k - 1$;
\item $\ra(X_{1}),\, \ra(X_{2}) \geq k$; and,
\item $x \in \cl(X_{1}) \cap \cl(X_{2})$.
\end{enumerate}
In this case $(X_{1},\, X_{2},\, x)$ is a
\emph{vertical $k$\dash partition} of $M$.
\end{dfn}

The next result is well known and easy to prove.

\begin{prop}
\label{contr2}
Let $M$ be a $3$\dash connected matroid and suppose that
$\si(M / x)$ is not $3$\dash connected for some $x \in E(M)$.
Then there exists a vertical $3$\dash partition
$(X_{1},\, X_{2},\, x)$ of $M$.
\end{prop}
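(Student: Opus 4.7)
My plan is to start from a low-order separation of $\si(M/x)$ and lift it through contraction to produce the vertical $3$-partition of $M$. First I would check that $\si(M/x)$ has neither loops nor coloops: since $M$ is $3$-connected with $|E(M)|\geq 4$, no element of $M$ is parallel to $x$ (so $M/x$ is loopless), and a coloop of $M/x$ would give a $1$- or $2$-element cocircuit of $M$, contrary to $3$-connectivity. Consequently, any $k$-separation of $\si(M/x)$ with $k\leq 2$ must have both sides of size at least $2$: a $1$-separation with a singleton side would force the singleton to be a loop or coloop. So we may pick a partition $(Y_{1},Y_{2})$ of $E(\si(M/x))$ with $|Y_{1}|,|Y_{2}|\geq 2$ and $\la_{\si(M/x)}(Y_{1})\leq 1$.

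Next I would pull this back to $M$. Let $Z_{i}$ be the union of the $M/x$-parallel classes of the elements of $Y_{i}$, so $(Z_{1},Z_{2})$ partitions $E(M)-x$, $|Z_{i}|\geq 2$, and $\la_{M/x}(Z_{i})=\la_{\si(M/x)}(Y_{i})\leq 1$ since simplification preserves the connectivity function. A direct rank computation gives
\begin{displaymath}
\la_{M}(Z_{1}) - \la_{M/x}(Z_{1}) = 1 - \bigl(\ra_{M}(Z_{1}\cup x) - \ra_{M}(Z_{1})\bigr),
\end{displaymath}
so $\la_{M}(Z_{1}) = \la_{M/x}(Z_{1}) + 1$ if $x\in\cl_{M}(Z_{1})$, and $\la_{M}(Z_{1}) = \la_{M/x}(Z_{1})$ otherwise; likewise for $Z_{2}$. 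Since $M$ is $3$-connected and $|Z_{1}|,|Z_{2}|\geq 2$, we have $\la_{M}(Z_{i})\geq 2$, while $\la_{M/x}(Z_{i})\leq 1$; combined with the symmetry $\la_{M}(Z_{1})=\la_{M}(Z_{2})$ this forces $\la_{M}(Z_{1})=\la_{M}(Z_{2})=2$, $\la_{M/x}(Z_{1})=\la_{M/x}(Z_{2})=1$, and $x\in\cl_{M}(Z_{1})\cap\cl_{M}(Z_{2})$, establishing conditions (i) and (iii) of Definition~\ref{dfn1} with $k=3$.

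It remains to verify the rank condition $\ra_{M}(Z_{1}),\ra_{M}(Z_{2})\geq 3$. Because $\si(M/x)$ is simple with no parallel pairs, $|Y_{i}|\geq 2$ implies $\ra_{\si(M/x)}(Y_{i})\geq 2$, hence $\ra_{M/x}(Z_{i})\geq 2$, and so $\ra_{M}(Z_{i}\cup x)\geq 3$. Using $x\in\cl_{M}(Z_{i})$ gives $\ra_{M}(Z_{i})=\ra_{M}(Z_{i}\cup x)\geq 3$, as required. Thus $(Z_{1},Z_{2},x)$ is a vertical $3$-partition of $M$. The only real subtlety is the initial reduction to a $2$-separation of $\si(M/x)$ with both sides of size at least $2$; once that is in place, the rest is a mechanical comparison of $\la_{M}$ and $\la_{M/x}$ together with the symmetry of the connectivity function.
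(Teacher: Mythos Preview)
Your argument is the standard one and is essentially correct; the paper itself omits the proof, noting only that the result is ``well known and easy to prove.'' Two small points are worth tightening. First, to exclude a singleton side $\{e\}$ in a $1$-separation of $\si(M/x)$ you argue that $M/x$ has no coloops, but what you actually need is that $\si(M/x)$ has no coloops, and simplification can in principle create them. The fix is easy: if the parallel class $P_{e}$ of $e$ in $M/x$ has $|P_{e}|\geq 2$, then $P_{e}$ contains a cocircuit $D$ of $M$ with $\ra_{M}(D)\leq 2$, so $\la_{M}(D)\leq 1$, contradicting $3$-connectivity of $M$; if $|P_{e}|=1$ your original argument applies. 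Second, the asserted ``symmetry $\la_{M}(Z_{1})=\la_{M}(Z_{2})$'' is neither justified at that point (the sets $Z_{1}$ and $Z_{2}$ are not complementary in $E(M)$) nor needed: you already have $2\leq\la_{M}(Z_{i})\leq\la_{M/x}(Z_{i})+1\leq 2$ for each $i$ separately, which forces $\la_{M}(Z_{i})=2$, $\la_{M/x}(Z_{i})=1$, and $x\in\cl_{M}(Z_{i})$.
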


\begin{prop}
\label{vertcl}
Suppose that $(X_{1},\, X_{2},\, x)$ is vertical $k$\dash partition
of the $k$\dash connected matroid $M$.
Let $A$ be a subset of $\cl(X_{2} \cup x)$.
Then $(X_{1} - A,\, (X_{2} \cup A) - x,\, x)$ is also
a vertical $k$\dash partition of $M$.
\end{prop}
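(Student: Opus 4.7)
The plan is to reduce first to the case $A \subseteq X_{1}$, observing that elements of $A \cap (X_{2} \cup \{x\})$ do not affect either piece of the new partition. Condition~(iii) of Definition~\ref{dfn1} gives $x \in \cl(X_{2})$, so $\cl(X_{2} \cup x) = \cl(X_{2})$, and the hypothesis becomes $A \subseteq X_{1} \cap \cl(X_{2})$.

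The crux is to show that $\ra(X_{1} - A) = \ra(X_{1})$, which I would establish by induction on $|A|$. For the inductive step, suppose $\ra(X_{1} - B) = \ra(X_{1})$ for some $B \subsetneq A$ and pick $a \in A - B$. Because $B \cup \{a\} \subseteq \cl(X_{2} \cup x)$ and $x \in \cl(X_{2})$,
\[
\la(X_{1} - B - a) = \ra(X_{1} - B - a) + \ra((X_{2} \cup x) \cup B \cup a) - \ra(M) = \ra(X_{1} - B - a) + \ra(X_{2}) - \ra(M).
\]
If $a \notin \cl((X_{1} - B) - a)$, then $\ra(X_{1} - B - a) = \ra(X_{1}) - 1$, forcing $\la(X_{1} - B - a) = k - 2$. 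The bounds $|X_{1} - B - a| \geq \ra(X_{1}) - 1 \geq k - 1$ and $|E(M) - (X_{1} - B - a)| \geq |X_{2} \cup x| \geq k + 1$ then exhibit a $(k-1)$-separation of $M$, contradicting the $k$-connectivity of $M$. Hence $a \in \cl((X_{1} - B) - a)$, so $\ra(X_{1} - B - a) = \ra(X_{1} - B) = \ra(X_{1})$, which closes the induction.

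Writing $Y_{1} = X_{1} - A$ and $Y_{2} = (X_{2} \cup A) - x$, the three conditions of Definition~\ref{dfn1} follow routinely. Condition~(ii) holds because $\ra(Y_{1}) = \ra(X_{1}) \geq k$ and $X_{2} \subseteq Y_{2}$ gives $\ra(Y_{2}) \geq \ra(X_{2}) \geq k$. For condition~(iii), the equality $\ra(X_{1} - A) = \ra(X_{1})$ together with $X_{1} - A \subseteq X_{1}$ forces $\cl(X_{1} - A) = \cl(X_{1}) \ni x$, while $x \in \cl(X_{2}) \subseteq \cl(Y_{2})$. Condition~(i) follows from a direct rank computation that, using $A \subseteq \cl(X_{2})$ and $x \in \cl(X_{1} - A)$, yields $\la(Y_{1}) = \la(Y_{2}) = \ra(X_{1}) + \ra(X_{2}) - \ra(M) = k - 1$.

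The main obstacle is the rank-preservation step; the remainder is bookkeeping. Its essential content is that ``guts'' elements of an exact $k$-separation of a $k$-connected matroid can be transferred across the separation without altering the rank of either side, an observation whose proof requires the full strength of the $k$-connectivity hypothesis via the forbidden $(k-1)$-separation above.
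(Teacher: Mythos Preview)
Your proof is correct and follows essentially the same approach as the paper: transfer the elements of $A \cap X_{1}$ one at a time, arguing at each step that removing the element cannot drop the rank of the $X_{1}$-side because that would create a $(k-1)$-separation contradicting $k$-connectivity. Your version is more explicit than the paper's---you carefully reduce to $A \subseteq X_{1}$, verify the size bounds needed for the $(k-1)$-separation, and check all three conditions of Definition~\ref{dfn1} at the end---but the core idea and structure are the same.
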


\begin{proof}
Suppose that $z$ is some element in $X_{1} \cap A$.
Then $\la(X_{1} - z)$ is either $k - 2$ or $k - 1$.
If $\la(X_{1} - z) = k - 2$ then $(X_{1} - z,\, X_{2} \cup \{x,\,z\})$
is a $(k - 1)$\dash separation of $M$, a contradiction.
Hence $\la(X_{1} - z) = k - 1$ which implies that
$\ra(X_{1} - z) = \ra(X_{1})$.
Thus $\cl(X_{1} - z) = \cl(X_{1})$, and hence $x \in \cl(X_{1} - z)$.
It follows that $(X_{1} - z,\, X_{2} \cup z,\, x)$ is a vertical
$k$\dash partition of $M$.
By continuing to transfer elements in $X_{1} \cap A$
from $X_{1}$ into $X_{2}$ we eventually conclude that
$(X_{2} - A,\, (X_{2} \cup A) - x,\, x)$ is a
vertical $k$\dash partition of $M$, as desired.
\end{proof}

Suppose that $M_{1}$ and $M_{2}$ are matroids such that
$E(M_{1}) \cap E(M_{2}) = \{p\}$.
Then we can define the \emph{parallel connection} of $M_{1}$ and $M_{2}$,
denoted by $P(M_{1},\, M_{2})$.
The ground set of $P(M_{1},\, M_{2})$ is $E(M_{1}) \cup E(M_{2})$.
If $p$ is a loop in neither $M_{1}$ nor $M_{2}$ then
the circuits of $P(M_{1},\, M_{2})$ are exactly the circuits
of $M_{1}$, the circuits of $M_{2}$, and sets of the form
$(C_{1} - p) \cup (C_{2} - p)$, where $C_{i}$ is a circuit of
$M_{i}$ such that $p \in C_{i}$ for $i = 1,\, 2$.
If $p$ is a loop in $M_{1}$ then $P(M_{1},\, M_{2})$
is defined to be the direct sum of $M_{1}$ and $M_{2} / p$.
Similarly, if $p$ is a loop in $M_{2}$ then $P(M_{1},\, M_{2})$
is defined to be the direct sum of $M_{1} / p$ and $M_{2}$.
We say that $p$ is the \emph{basepoint} of the parallel connection.
It is clear that $P(M_{1},\, M_{2}) = P(M_{2},\, M_{1})$.

The next result follows from~\cite[Proposition~7.1.15~(v)]{oxley}.

\begin{prop}
\label{prop6}
Suppose that $M_{1}$ and $M_{2}$ are matroids such that
$E(M_{1}) \cap E(M_{2}) = \{p\}$.
If $e \in E(M_{1}) - p$ then
$P(M_{1},\, M_{2}) \del e = P(M_{1} \del e,\, M_{2})$ and
$P(M_{1},\, M_{2}) / e = P(M_{1} / e,\, M_{2})$.
\end{prop}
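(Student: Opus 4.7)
The plan is to verify each identity by checking that both matroids have the same ground set and the same family of circuits. The ground set equalities are immediate in both cases, so the work lies in matching circuits. Since the definition of $P(M_{1},\, M_{2})$ bifurcates according to the loop status of the basepoint $p$ in $M_{1}$ and $M_{2}$, I would split the argument into cases along these lines.

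For the deletion identity, the key preliminary observation is that since $e \neq p$, the element $p$ is a loop of $M_{1} \del e$ if and only if it is a loop of $M_{1}$. If $p$ is a loop of $M_{1}$, then by definition $P(M_{1},\, M_{2}) = M_{1} \oplus (M_{2} / p)$, so $P(M_{1},\, M_{2}) \del e = (M_{1} \del e) \oplus (M_{2} / p) = P(M_{1} \del e,\, M_{2})$. The subcase where $p$ is a loop of $M_{2}$ but not of $M_{1}$ is analogous, using that deletion and contraction commute on disjoint elements. In the remaining case that $p$ is a loop of neither matroid, I would compare directly the three types of circuits listed in the definition: the circuits of $M_{1}$ avoiding $e$ are exactly the circuits of $M_{1} \del e$ (including those through $p$); the circuits of $M_{2}$ match on both sides; and the mixed circuits $(C_{1} - p) \cup (C_{2} - p)$ occur on the left precisely when $e \notin C_{1}$, which is the same as requiring $C_{1}$ to be a circuit of $M_{1} \del e$ containing $p$.

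For the contraction identity, the same case split applies, with one extra wrinkle: $p$ can become a loop of $M_{1} / e$ even when $p$ is not a loop of $M_{1}$, namely when $\{p,\, e\}$ is a circuit of $M_{1}$. I would handle the genuine loop cases first, exactly as above, using that contraction commutes with direct sum on disjoint elements. For the new wrinkle, note that $e$ and $p$ are parallel in $P(M_{1},\, M_{2})$, so $p$ becomes a loop of $P(M_{1},\, M_{2}) / e$ and the matroid decomposes as the direct sum $(M_{1} / e) \oplus (M_{2} / p)$, which agrees with $P(M_{1} / e,\, M_{2})$ by definition. In the generic remaining case, I would describe the circuits of $P(M_{1},\, M_{2}) / e$ as the minimal non-empty sets of the form $C - e$ where $C$ ranges over circuits of $P(M_{1},\, M_{2})$, and verify by case analysis on the type of $C$ that these minimal members are exactly the three types of circuits of $P(M_{1} / e,\, M_{2})$.

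The main obstacle, such as it is, is bookkeeping: managing the case analysis cleanly and, in the contraction identity, checking that minimizing over $\{C - e : C \text{ is a circuit}\}$ produces each of the three types of circuits of $P(M_{1} / e,\, M_{2})$ with no spurious extras. The individual verifications are routine once the correspondence between circuits of $M_{1}$ meeting $e$ and circuits of $M_{1} / e$ is set up, so the real content of the proposition is captured once the circuit descriptions are lined up across the two sides.
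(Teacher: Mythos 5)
The paper does not actually prove Proposition~\ref{prop6}; it simply cites Proposition~7.1.15(v) of Oxley's \emph{Matroid Theory}. Your proposal supplies a direct first-principles argument by matching circuits, and its overall structure is sound: the case split on whether $p$ is a loop, the observation that deletion of $e \ne p$ cannot change the loop status of $p$ while contraction of $e$ can (precisely when $\{p,e\}$ is a circuit of $M_1$), and the minimal-nonempty-member description of circuits under contraction are all the right ingredients, and they track the way the cited result is established in Oxley.

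One point where the proposal is a bit glib is the parallel-pair subcase of the contraction identity. You assert that $P(M_1,M_2)/e$ ``decomposes as the direct sum $(M_1/e)\oplus(M_2/p)$'' essentially because $p$ becomes a loop, but this still requires a circuit check: one must see that the type~(c) circuits $(C_1-p)\cup(C_2-p)$ of $P(M_1,M_2)$, after contracting $e$, collapse exactly to the circuits of $M_2/p$ (using that $\{e\}\cup(C_2-p)$ is itself a type~(c) circuit whenever $C_2$ is a circuit of $M_2$ through $p$), while the type~(a) circuits contribute exactly the circuits of $M_1/e$. This is not by definition; it is the same kind of bookkeeping you flag as the main obstacle, and it needs to be done here too. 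Similarly, in the generic contraction case, the minimization of $(C_1-\{p,e\})\cup(C_2-p)$ has to be carried out against the type~(a) and type~(b) families simultaneously, not just within type~(c), and this is where the correspondence between circuits of $M_1$ meeting $e$ and circuits of $M_1/e$ genuinely earns its keep. None of this threatens the proof, but it is slightly more than routine relabelling, and the write-up should acknowledge that the direct-sum claim in the parallel subcase is itself an instance of the circuit comparison rather than a freebie.
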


Assume that $M_{1}$ and $M_{2}$ are matroids such that
$E(M_{1}) \cap E(M_{2}) = \{p\}$.
If $p$ is not a loop or a coloop in either $M_{1}$ or $M_{2}$
then $P(M_{1},\, M_{2}) \del p$ is the
\emph{$2$\dash sum} of $M_{1}$ and $M_{2}$, denoted by
$M_{1} \oplus_{2} M_{2}$.
We say that $p$ is the \emph{basepoint} of the
$2$\dash sum.

The next result follows from~\cite[(2.6)]{seymour3}.

\begin{prop}
\label{prop2}
If $(X_{1},\, X_{2})$ is an exact $2$\dash separation of a matroid
$M$ then there exist matroids $M_{1}$ and $M_{2}$ on the
ground sets $X_{1} \cup p$ and $X_{2} \cup p$ respectively, where
$p$ is in neither $X_{1}$ nor $X_{2}$, such that $M$ is equal to
$M_{1} \oplus_{2} M_{2}$.
\end{prop}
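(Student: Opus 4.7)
The plan is to construct $M_1$ and $M_2$ directly from the rank function of $M$, then verify that the resulting 2-sum recovers $M$. Exactness of the 2-separation supplies the key identity $\ra(X_1) + \ra(X_2) = \ra(M) + 1$, and this single unit of overlap is what the basepoint $p$ will absorb.

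First I would introduce a new element $p$ not in $E(M)$ and define a candidate rank function on subsets of $X_1 \cup p$ by
\[
\ra_{M_1}(A) = \begin{cases} \ra_M(A) & \text{if } p \notin A, \\ \ra_M((A - p) \cup X_2) - \ra_M(X_2) + 1 & \text{if } p \in A, \end{cases}
\]
with $\ra_{M_2}$ defined analogously on $X_2 \cup p$. I would then verify that $\ra_{M_1}$ is a matroid rank function: nonnegativity and the unit-increase property are immediate, while submodularity reduces by a four-way case analysis on which of the two compared sets contains $p$ to the submodularity of $\ra_M$. Next I would check that $p$ is neither a loop nor a coloop of $M_1$: the evaluation $\ra_{M_1}(p) = \ra_M(X_2) - \ra_M(X_2) + 1 = 1$ rules out loops, and $\ra_{M_1}(X_1 \cup p) = \ra_M(E(M)) - \ra_M(X_2) + 1 = \ra_M(X_1) = \ra_{M_1}(X_1)$, which uses exactness, rules out coloops. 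The analogous facts for $M_2$ follow by symmetry.

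The final task is to verify $M = M_1 \oplus_2 M_2$. Since $M_1 \del p = M | X_1$ and $M_2 \del p = M | X_2$ by construction, the within-side circuits of the 2-sum are exactly the within-side circuits of $M$. For a circuit $C = A_1 \cup A_2$ of $M$ meeting both $X_1$ and $X_2$, submodularity applied to the pair $A_1 \cup A_2$ and $X_2$ yields $\ra_M(A_1 \cup X_2) \leq |A_1| + \ra(X_2) - 1$, while extending a basis of $C - x$ (for any $x \in A_1$) across $X_2$ provides the matching lower bound. It follows that $\ra_{M_1}(A_1 \cup p) = |A_1|$, so $A_1 \cup p$ is dependent in $M_1$, and symmetrically $A_2 \cup p$ is dependent in $M_2$.

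The main obstacle is the minimality half of this correspondence: showing that $A_1 \cup p$ is actually a \emph{circuit} of $M_1$ rather than merely dependent, and that every circuit of $M_1$ through $p$ arises from a cross circuit of $M$. I would handle this by assuming toward contradiction that some $A_1' \subsetneq A_1$ also indexes a cross circuit of $M$, then applying strong circuit elimination between the two putative cross circuits to produce a third cross circuit whose $X_1$-part is strictly smaller than $A_1$; an induction on $|A_1|$ then forces the desired minimality. Once this is in place the circuit families of $M$ and $M_1 \oplus_2 M_2$ agree, and together with Proposition~\ref{prop6} we conclude that the two matroids coincide.
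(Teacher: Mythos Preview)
The paper does not give its own proof of this proposition; it simply records that the statement follows from Seymour's \cite[(2.6)]{seymour3}. So there is nothing in the paper to compare against, and what you are proposing is a direct proof where the authors were content to quote a classical result. Your rank-function construction of $M_{1}$ and $M_{2}$ is the standard one, and your verification that $p$ is neither a loop nor a coloop (using exactness) is correct.

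There are, however, two genuine gaps in the circuit-matching portion. First, your minimality argument does not do what you need. You want to show that for \emph{every} cross circuit $C$ with $A_{1} = C \cap X_{1}$, the set $A_{1} \cup p$ is a circuit of $M_{1}$. Assuming some $A_{1}' \subsetneq A_{1}$ is also the $X_{1}$-part of a cross circuit and then eliminating between the two cross circuits produces yet another cross circuit with $X_{1}$-part inside $A_{1}$, but that is not a contradiction to anything; no induction on $|A_{1}|$ closes the loop. A clean argument is this: if $A_{1}' \subsetneq A_{1}$ had $A_{1}' \cup p$ dependent, then $\sqcap(A_{1}',X_{2}) = 1 = \sqcap(A_{1},X_{2})$ (both bounded above by $\la(X_{1}) = 1$), so $\ra(A_{1} \cup X_{2}) - \ra(A_{1}' \cup X_{2}) = |A_{1} - A_{1}'|$. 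On the other hand, $A_{1}' \cup A_{2} \subsetneq C$ is independent while $C$ is a circuit, so $\ra(A_{1} \cup A_{2}) - \ra(A_{1}' \cup A_{2}) = |A_{1} - A_{1}'| - 1$; contracting the further elements of $X_{2} - A_{2}$ can only decrease this gap, contradicting the previous equality.

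Second, you never establish the converse direction: that every $2$\dash sum circuit $(C_{1} - p) \cup (C_{2} - p)$ is a circuit of $M$. This is not automatic, because $C_{1}$ and $C_{2}$ may arise from \emph{different} cross circuits of $M$, so you must still argue that mixing the $X_{1}$-part of one with the $X_{2}$-part of another yields a circuit. Finally, the closing appeal to Proposition~\ref{prop6} is misplaced: that proposition concerns deleting or contracting a non-basepoint element from a parallel connection and does not help identify $M$ with $M_{1} \oplus_{2} M_{2}$.
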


\begin{prop}
\label{prop5}
Suppose that $N$ is a $3$\dash connected matroid.
Let $M$ be a matroid with a vertical $3$\dash partition
$(X_{1},\, X_{2},\, x)$ such that $N$ is a minor of $M / x$.
Then either $|E(N) \cap X_{1}| \leq 1$, or $|E(N) \cap X_{2}| \leq 1$.
\end{prop}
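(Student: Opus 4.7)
The plan is to use the condition $x \in \cl(X_{1}) \cap \cl(X_{2})$ to convert the vertical $3$\dash partition of $M$ into an honest $2$\dash separation of $M / x$, and then invoke Proposition~\ref{prop1} together with the $3$\dash connectivity of $N$ to conclude.

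First I would compute $\la_{M/x}(X_{1})$ directly from the rank function. Since $x \in \cl(X_{1})$, we have $\ra_{M/x}(X_{1}) = \ra_{M}(X_{1} \cup x) - 1 = \ra_{M}(X_{1}) - 1$; similarly $\ra_{M/x}(X_{2}) = \ra_{M}(X_{2}) - 1$ because $x \in \cl(X_{2})$; and $\ra(M / x) = \ra(M) - 1$. Substituting these into the definition of the connectivity function yields $\la_{M/x}(X_{1}) = \la_{M}(X_{1}) - 1 = 1$.

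Next I would apply Proposition~\ref{prop1}: since $N$ is a minor of $M / x$ and $X_{1} \subseteq E(M / x)$, we obtain $\la_{N}(E(N) \cap X_{1}) \leq \la_{M/x}(X_{1}) = 1$. Since $N$ is $3$\dash connected, no subset $Y \subseteq E(N)$ with $|Y|,\, |E(N) - Y| \geq 2$ can satisfy $\la_{N}(Y) \leq 1$. Because $E(N) \cap X_{2} = E(N) - (E(N) \cap X_{1})$, this forces one of $|E(N) \cap X_{1}|$ and $|E(N) \cap X_{2}|$ to be at most $1$, which is the desired conclusion.

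The argument is essentially bookkeeping; the only step requiring any care is justifying the rank reductions under contraction, which depends precisely on the clause $x \in \cl(X_{1}) \cap \cl(X_{2})$ in the definition of a vertical $3$\dash partition. I do not anticipate any substantive obstacle.
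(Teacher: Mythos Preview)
Your proof is correct and is essentially the same as the paper's: the paper's proof is the single sentence ``Since $(X_{1},\, X_{2})$ is a $2$\dash separation of $M / x$ the result follows immediately from Proposition~\ref{prop1},'' and your argument simply unpacks why $(X_{1},\, X_{2})$ is a $2$\dash separation of $M/x$ by computing $\la_{M/x}(X_{1})=1$ from the clause $x \in \cl(X_{1}) \cap \cl(X_{2})$. The only implicit assumption in both versions is that $x$ is not a loop (so that $\ra(M/x)=\ra(M)-1$), which is harmless in every application in the paper since $M$ is $3$\dash connected there.
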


\begin{proof}
Since $(X_{1},\, X_{2})$ is a $2$\dash separation of $M / x$ the result
follows immediately from Proposition~\ref{prop1}.
\end{proof}

\begin{lemma}
\label{smallside}
Suppose that $N$ is a $3$\dash connected matroid such that
$|E(N)| \geq 2$.
Let $M$ be a matroid with a vertical $3$\dash partition
$(X_{1},\, X_{2},\, x)$ such that $N$ is a minor of $M / x$.
If $|E(N) \cap X_{1}| \leq 1$ then $M / x / e$ has an $N$\dash minor
for every element $e \in X_{1} - \cl_{M}(X_{2})$.
\end{lemma}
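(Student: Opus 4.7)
The plan is to exploit the $2$\dash sum decomposition of $M / x$ across the separation $(X_1,\, X_2)$. From the vertical partition conditions $\la_M(X_i) = 2$ and $x \in \cl(X_1) \cap \cl(X_2)$, a direct rank calculation gives $\la_{M/x}(X_1) = 1$, so $(X_1,\, X_2)$ is an exact $2$\dash separation of $M / x$. Proposition~\ref{prop2} then writes $M / x = M_1 \oplus_2 M_2 = P(M_1,\, M_2) \del p$ for matroids $M_1$ on $X_1 \cup p$ and $M_2$ on $X_2 \cup p$, with $p$ neither a loop nor a coloop of either factor.

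Next I would translate the hypothesis $e \in X_1 - \cl_M(X_2)$ into the language of $M_1$. Since $x \in \cl_M(X_2)$ one has $\cl_{M/x}(X_2) = \cl_M(X_2) - x$, and a short parallel\dash connection computation shows that an element $e \in X_1$ belongs to $\cl_M(X_2)$ exactly when $e$ is a loop of $M_1$ or $e$ is parallel to $p$ in $M_1$.  Thus $e$ is neither, which forces $p$ to remain both a non\dash loop and a non\dash coloop of $M_1 / e$.  Proposition~\ref{prop6} then gives $M / x / e = (M_1 / e) \oplus_2 M_2$, so contracting $e$ preserves the $2$\dash sum decomposition with the same $M_2$.

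To analyse $N$, I would write $N = (M / x) / C \del D$, set $C_i = C \cap X_i$, $D_i = D \cap X_i$, $M_i' = M_i / C_i \del D_i$, so that $N = P(M_1',\, M_2') \del p$. Since $|E(N) \cap X_1| \leq 1$, $M_1'$ lives on $\{p\}$ or on $\{f,\, p\}$ for some $f$, and the hypotheses that $N$ is $3$\dash connected with $|E(N)| \geq 2$ rule out any case where $M_1'$ would put a loop or a coloop into $N$. The surviving possibilities are: (A) $p$ is a loop of $M_1'$ and $N = M_2' / p$; (B) $p$ is a coloop of $M_1'$ and $N = M_2' \del p$; (C) $M_1' \iso U_{1,2}$ on $\{f,\, p\}$ and $N$ is isomorphic to $M_2'$ via the relabelling $p \leftrightarrow f$.

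Finally I would realise $N$ as a minor of $(M_1 / e) \oplus_2 M_2$ case by case. In (A), contract all of $X_1 - e$ in $M_1 / e$: because $p$ is not a coloop of $M_1$, we have $p \in \cl_{M_1}(X_1)$, so $p$ becomes a loop of $M_1 / X_1$, and the parallel\dash connection collapse together with the $C_2,\, D_2$ operations produces $M_2' / p = N$. In (B), delete $X_1 - e$ from $M_1 / e$: our translation of the hypothesis gives $p \notin \cl_{M_1}(\{e\})$, so $p$ is a coloop of the restriction to $\{p\}$, and the collapse with the $X_2$ operations produces $M_2' \del p = N$. The hard part is case (C): since $p$ is not a coloop of $M_1 / e$, some circuit $K$ of $M_1 / e$ contains $p$; choosing any $g \in K - p$, contracting $K - \{g,\, p\}$ and deleting $X_1 - e - K$ in $M_1 / e$ produces a minor on $\{g,\, p\}$ with $\{g,\, p\}$ a parallel pair, and the final collapse yields $M_2'$ with $p$ relabelled as $g$, isomorphic to $N$. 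The critical subcase is $e = f$: here the element $f$ is no longer available to play its old role, and the whole point of the hypothesis $e \notin \cl_M(X_2)$ is to guarantee that $p$ remains a non\dash coloop in $M_1 / e$, so a substitute $g$ always exists.
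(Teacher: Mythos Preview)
Your proof is correct and follows essentially the same route as the paper: decompose $M/x$ as a $2$\dash sum $P(M_{1},\,M_{2})\del p$ along the separation $(X_{1},\,X_{2})$, classify the small factor $M_{1}'$ into the three cases (loop, coloop, parallel pair at $p$), and in each case manufacture the matching minor of $M_{1}/e$ using that $e\notin\cl_{M}(X_{2})$ forces $e$ to be neither a loop nor parallel to $p$. The only organisational difference is that you first verify $p$ remains a non\dash loop and non\dash coloop of $M_{1}/e$ (so $M/x/e$ is again a $2$\dash sum with the same $M_{2}$) and then argue inside that decomposition, whereas the paper works case by case and invokes Proposition~\ref{prop6} directly each time; the content is the same.
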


\begin{proof}
Since $(X_{1},\, X_{2})$ is an exact $2$\dash separation of $M / x$,
it follows from Proposition~\ref{prop2} that $M / x$ is the
$2$\dash sum of matroids $M_{1}$ and $M_{2}$ along the basepoint $p$,
where $E(M_{1}) = X_{1} \cup p$ and $E(M_{2}) = X_{2} \cup p$.
Thus $M / x = P(M_{1},\, M_{2}) \del p$.

Suppose that $E(N) \cap X_{1} = \emptyset$.
Then there is a partition $(A,\, B)$ of $X_{1}$ such that
$N$ is a minor of $M / x / A \del B$.
Suppose that $p$ is a loop in $M_{1} / A \del B$.
Proposition~\ref{prop6} implies that
\begin{displaymath}
M / x / A \del B = P(M_{1} / A \del B,\, M_{2}) \del p.
\end{displaymath}
Now the definition of parallel connection implies that
$M / x / A \del B$ is isomorphic to $M_{2} / p$.
It is easily seen that if $e \in X_{1}$ then there is a minor $M'$
of $M_{1} / e$ such that $E(M') = \{p\}$ and $p$ is a loop of
$M'$.
Proposition~\ref{prop6} implies that $P(M',\, M_{2}) \del p$ is
a minor of $M / x / e$.
But $P(M',\, M_{2}) \del p$ is isomorphic to $M_{2} / p$, so
$M / x / e$ has an $N$\dash minor.

Next we suppose that $p$ is a coloop of $M_{1} / A \del B$.
Then, by definition of the parallel connection, $M / x / A \del B$
is isomorphic to $M_{2} \del p$.
Suppose that $e \in X_{1} - \cl(X_{2})$.
Since $p$ is not a coloop of $M_{2}$ it follows easily that
$p \in \cl_{M}(X_{2})$.
Thus $e$ is not parallel to $p$ in $M_{1}$.
Therefore there is a minor $M'$ of $M_{1} / e$
such that $E(M') = \{p\}$ and $p$ is a coloop of
$M'$.
Again using Proposition~\ref{prop6} we see that $P(M',\, M_{2}) \del p$
is a minor of $M / x / e$.
But since $P(M',\, M_{2}) \del p$ is isomorphic to $M_{2} \del p$ we
deduce that $M / x / e$ has an $N$\dash minor.

Now we assume that $|E(N) \cap X_{1}| = 1$ and that $z$ is the unique
element in $E(N) \cap X_{1}$.
There is a partition $(A,\, B)$ of $X_{1} - z$ such that
$N$ is a minor of $M / x / A \del B$.
It follows from Proposition~\ref{prop6} that
$P(M_{1} / A \del B,\, M_{2}) \del p$ has an $N$\dash minor.
Consider the matroid $M_{1} / A \del B$.
If $\{z,\, p\}$ is not a parallel pair in this matroid then
$z$ must be a loop or coloop in $P(M_{1} / A \del B,\, M_{2}) \del p$.
This implies that $z$ is a loop or coloop in $N$, a contradiction as
$N$ is $3$\dash connected and $|E(N)| \geq 2$.
Therefore $z$ and $p$ are parallel in
$M_{1} / A \del B$, and therefore
$P(M_{1} / A \del B,\, M_{2}) \del p$ is isomorphic to
$M_{2}$.
Thus $M_{2}$ has an $N$\dash minor.

Since $p$ is not a loop or coloop of $M_{1}$
there is a circuit of size
at least two in $M_{1}$ that contains $p$.
Suppose that $e \in X_{1} - \cl_{M}(X_{2})$.
Then $e$ cannot be parallel to $p$ in $M_{1}$, so
$M_{1} / e$ has a circuit of size at least two
that contains $p$.
Hence there is a minor $M'$ of $M_{1} / e$ such that $p \in E(M')$
and $M'$ consists of a parallel pair.
Proposition~\ref{prop6} implies that $P(M',\, M_{2}) \del p$
is a minor of $M / x / e$.
But $P(M',\, M_{2}) \del p$ is isomorphic to $M_{2}$, so
$M / x / e$ has an $N$\dash minor.
\end{proof}

\begin{dfn}
\label{dfn2}
Suppose that $M$ is a matroid and that $A$ and $B$ are
subsets of $E(M)$.
The \emph{local connectivity} between $A$ and $B$,
denoted by $\sqcap(A,\, B)$, is defined to be
$\ra(A) + \ra(B) - \ra(A\cup B)$.
Equivalently, $\sqcap(A,\, B)$ is equal to
$\la_{M|(A \cup B)}(A)$.
\end{dfn}

\begin{prop}
\textup{\cite[Lemma~2.4~(iv)]{flowers}}
\label{flowers3}
Let $M$ be a matroid and let $(A,\, B,\, C)$ be a partition
of $E(M)$.
Then $\sqcap(A,\, B) + \la(C) = \sqcap(A,\, C) + \la(B)$.
Hence $\sqcap(A,\, B) = \sqcap(A,\, C)$ if and only if
$\la(B) = \la(C)$.
\end{prop}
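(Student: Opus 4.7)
The plan is to prove the identity by direct expansion: both $\sqcap(A,B) + \la(C)$ and $\sqcap(A,C) + \la(B)$ collapse to the same symmetric expression in $\ra(A)$, $\ra(B)$, $\ra(C)$, and $\ra(M)$, after which the ``if and only if'' clause is immediate.

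The first step is to rewrite $\la(C)$ and $\la(B)$ in terms that expose the rank of the complement inside the partition. Because $(A,B,C)$ partitions $E(M)$, we have $E(M) - C = A \cup B$ and $E(M) - B = A \cup C$, so
\begin{displaymath}
\la(C) = \ra(C) + \ra(A \cup B) - \ra(M), \qquad
\la(B) = \ra(B) + \ra(A \cup C) - \ra(M).
\end{displaymath}
Applying the definition $\sqcap(A,B) = \ra(A) + \ra(B) - \ra(A \cup B)$, adding it to the formula for $\la(C)$, and noting that the $\ra(A \cup B)$ contributions cancel, we obtain $\sqcap(A,B) + \la(C) = \ra(A) + \ra(B) + \ra(C) - \ra(M)$. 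The same computation, with the roles of $B$ and $C$ swapped, gives $\sqcap(A,C) + \la(B) = \ra(A) + \ra(B) + \ra(C) - \ra(M)$.

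Comparing the two expressions yields the first assertion. For the second, rearranging the identity as $\sqcap(A,B) - \sqcap(A,C) = \la(B) - \la(C)$ shows that the two differences vanish simultaneously, which is precisely the ``if and only if'' statement. There is no genuine obstacle here; the only thing one must be careful about is correctly identifying complements inside the fixed partition $(A,B,C)$ when unfolding the definition of $\la$.
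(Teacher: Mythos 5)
Your proof is correct. The paper itself does not prove this proposition; it simply cites it as Lemma~2.4(iv) of Oxley, Semple, and Whittle's paper on $3$\dash separations, so there is no in-paper argument to compare against. Your direct expansion, showing that both sides collapse to the symmetric quantity $\ra(A) + \ra(B) + \ra(C) - \ra(M)$, is the standard and essentially only sensible way to establish the identity, and the ``if and only if'' clause follows immediately by rearrangement as you say.
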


\begin{corollary}
\label{flowers1}
Let $(X,\, Y,\, Z)$ be an exact $3$\dash partition of the
$3$\dash connected matroid $M$.
Then $\sqcap(X,\, Y) = \sqcap(X,\, Z) = \sqcap(Y,\, Z)$.
\end{corollary}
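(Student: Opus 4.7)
The plan is to deduce the three equalities by two applications of Proposition~\ref{flowers3}. Since $(X,Y,Z)$ is an exact $3$-partition, by definition we have
\begin{displaymath}
\la(X) = \la(Y) = \la(Z) = 2.
\end{displaymath}
No other property of the $3$-connected matroid $M$ is needed once this is in hand; the $3$-connectedness is only used implicitly to ensure the hypotheses of the definition make sense.

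First I would apply Proposition~\ref{flowers3} to the partition $(X,Y,Z)$ of $E(M)$, viewed as $(A,B,C) = (X,Y,Z)$. The proposition yields the identity
\begin{displaymath}
\sqcap(X,Y) + \la(Z) = \sqcap(X,Z) + \la(Y),
\end{displaymath}
and since $\la(Y) = \la(Z) = 2$ this collapses to $\sqcap(X,Y) = \sqcap(X,Z)$. Then I would permute the roles and apply the proposition again with $(A,B,C) = (Y,X,Z)$, which gives
\begin{displaymath}
\sqcap(Y,X) + \la(Z) = \sqcap(Y,Z) + \la(X),
\end{displaymath}
and since $\la(X) = \la(Z) = 2$ we obtain $\sqcap(Y,X) = \sqcap(Y,Z)$. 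Combined with the symmetry $\sqcap(X,Y) = \sqcap(Y,X)$ from Definition~\ref{dfn2}, this finishes the proof.

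There is no real obstacle here: the whole argument is a direct substitution into Proposition~\ref{flowers3}, exploiting the fact that exactness of the $3$-partition forces all three connectivity values $\la(X), \la(Y), \la(Z)$ to coincide. The only point worth noting is the symmetry of $\sqcap$, which is immediate from its definition as $\ra(A) + \ra(B) - \ra(A \cup B)$.
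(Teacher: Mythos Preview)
Your proof is correct and is precisely the intended argument: the paper states this as a corollary of Proposition~\ref{flowers3} without giving any explicit proof, and your two applications of that proposition together with the symmetry of $\sqcap$ are exactly what makes it immediate.
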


\begin{prop}
\label{prop7}
Suppose that $M$ is a matroid and that $X$ and $Y$ are disjoint
subsets of $E(M)$ such that $\sqcap(X,\, Y) = 1$.
If $x,\, y \in X \cap \cl(Y)$ then
$\ra(\{x,\, y\}) \leq 1$.
\end{prop}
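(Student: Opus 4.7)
The plan is to convert $\ra(\{x,y\})$ into a local connectivity using the hypothesis $\{x,y\} \subseteq \cl(Y)$, and then bound that local connectivity above by the given value $\sqcap(X,Y) = 1$ through a single application of submodularity of the rank function.

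First I would observe that, by Definition~\ref{dfn2},
\[
\sqcap(\{x,y\},\, Y) = \ra(\{x,y\}) + \ra(Y) - \ra(\{x,y\} \cup Y).
\]
The hypothesis $x,\, y \in \cl(Y)$ forces $\ra(\{x,y\} \cup Y) = \ra(Y)$, so this collapses to $\sqcap(\{x,y\},\, Y) = \ra(\{x,y\})$. It therefore suffices to prove that $\sqcap(\{x,y\},\, Y) \leq \sqcap(X,\, Y)$, i.e.\ that local connectivity with $Y$ is monotone under inclusion in the first argument, provided that argument is disjoint from $Y$.

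For the monotonicity step I would apply submodularity of $\ra$ to the pair $X$ and $\{x,y\} \cup Y$. Since $\{x,y\} \subseteq X$ and $X \cap Y = \emptyset$, the union is $X \cup Y$ and the intersection is $\{x,y\}$, and submodularity gives
\[
\ra(X) + \ra(\{x,y\} \cup Y) \geq \ra(X \cup Y) + \ra(\{x,y\}).
\]
Adding $\ra(Y)$ to both sides and rearranging yields $\sqcap(X,\, Y) \geq \sqcap(\{x,y\},\, Y)$. Combined with the previous paragraph, this gives $\ra(\{x,y\}) \leq 1$, as required. There is no real obstacle in this argument; the only thing to be careful about is choosing the correct pair of sets for the submodularity application so that the intersection is exactly $\{x,y\}$, which is where the disjointness of $X$ and $Y$ is used.
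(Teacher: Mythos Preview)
Your proof is correct. Both your argument and the paper's reduce to a single application of rank submodularity, though to different pairs of sets: you apply it to $X$ and $\{x,y\}\cup Y$ to obtain the monotonicity $\sqcap(\{x,y\},Y)\leq\sqcap(X,Y)$ directly, whereas the paper argues by contradiction, assuming $\ra(\{x,y\})=2$ and applying submodularity to $\cl(X)$ and $\cl(Y)$ (whose intersection then has rank at least~$2$) to force $\ra(X\cup Y)<\ra(X\cup Y)$. Your direct route has the mild advantage of isolating the reusable monotonicity fact for~$\sqcap$; otherwise the two arguments are equivalent in substance and length.
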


\begin{proof}
Assume that $\ra(\{x,\, y\}) = 2$.
Let $X' = \cl(X)$ and $Y' = \cl(Y)$.
It is easy to see that $\ra(X' \cup Y') = \ra(X \cup Y)$.
However
\begin{displaymath}
\ra(X' \cup  Y') \leq \ra(X') + \ra(Y') - \ra(X' \cap Y')
\leq \ra(X) + \ra(Y) - 2 = \ra(X \cup Y) - 1.
\end{displaymath}
This contradiction completes the proof.
\end{proof}

We conclude this section by stating a fundamental tool in the
study of $3$\dash connected matroids, due to Bixby~\cite{Bixby}.

\begin{theorem}[Bixby's Lemma]
\label{bixby}
Let $M$ be a $3$\dash connected matroid and suppose that
$x$ is an element of $E(M)$.
Then either $\si(M / x)$ or $\co(M \del x)$ is $3$\dash connected.
\end{theorem}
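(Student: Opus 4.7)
The plan is to argue by contradiction, using the two versions of Proposition~\ref{contr2} (primal and dual) and then exploiting submodularity of $\lambda$ on the intersections of the two partitions that result. Suppose neither $\si(M/x)$ nor $\co(M\del x)$ is $3$\dash connected. Proposition~\ref{contr2} applied to $M$ yields a vertical $3$\dash partition $(X_{1},X_{2},x)$ of $M$ with $\lambda(X_{1})=\lambda(X_{2})=2$, $\ra(X_{i})\ge 3$, and $x\in\cl(X_{1})\cap\cl(X_{2})$. Since $\si(M^{*}/x)$ and $\co(M\del x)$ are dual, applying Proposition~\ref{contr2} to $M^{*}$ produces a partition $(Y_{1},Y_{2},x)$ of $E(M)$ with $\lambda(Y_{1})=\lambda(Y_{2})=2$ (using $\lambda_{M^{*}}=\lambda_{M}$), $\ra^{*}(Y_{j})\ge 3$, and $x\in\cl^{*}(Y_{1})\cap\cl^{*}(Y_{2})$.

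Next I would look at the four intersection blocks $A_{ij}=X_{i}\cap Y_{j}$, which together with $\{x\}$ partition $E(M)$. Submodularity gives
\begin{displaymath}
\lambda(A_{11})+\lambda(A_{11}\cup x) \le \lambda(A_{11})+\lambda(A_{22}\cup x)+\lambda(\{x\}) \le \lambda(X_{1})+\lambda(Y_{1}) = 4,
\end{displaymath}
together with the analogous inequality on the pair $(A_{12},A_{21})$. Since $M$ is $3$\dash connected, any block $A_{ij}$ with $|A_{ij}|\ge 2$ and $|E(M)-A_{ij}|\ge 2$ satisfies $\lambda(A_{ij})\ge 2$, so these submodular bounds force at least one $A_{ij}$ to be small (of size $\le 1$), or one side of one of the partitions to be contained in a side of the other.

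The core step is then to combine this with Proposition~\ref{guts}: for each of the two partitions, $x$ lies in the primal closures of both sides of one partition but in the coclosures of both sides of the other, so $x$ cannot simultaneously be in $\cl(A_{ij})\cap\cl^{*}(A_{ij})$ for any single block. Using Proposition~\ref{vertcl} (and its dual) to slide elements of a small $A_{ij}$ between sides of $(X_{1},X_{2})$ or $(Y_{1},Y_{2})$, I would arrange that one of the two partitions has a side contained in a side of the other; then a direct rank calculation shows that either the hypothesis $\ra(X_{i})\ge 3$ (or $\ra^{*}(Y_{j})\ge 3$) fails, or else the refined partition yields an exact $2$\dash separation of $M$ not containing $x$, contradicting $3$\dash connectivity.

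The main obstacle is the case analysis that keeps track of which closure or coclosure $x$ belongs to once the partitions are refined; the four possible placements of $x$ (in $\cl(X_{i})$ versus $\cl^{*}(Y_{j})$) interact nontrivially with the four intersection blocks, and Proposition~\ref{guts} is the tool that rules out the otherwise symmetric cases. Once that bookkeeping is handled, the contradiction with $3$\dash connectivity is immediate from submodularity.
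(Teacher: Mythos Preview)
The paper does not prove Bixby's Lemma; it merely states it with a citation to~\cite{Bixby}. So there is no ``paper's own proof'' to compare against.

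Your outline is the standard approach, and the right one: cross a vertical $3$\dash partition $(X_{1},X_{2},x)$ of $M$ with a covertical $3$\dash partition $(Y_{1},Y_{2},x)$ coming from $M^{*}$, and use submodularity on the four blocks $A_{ij}=X_{i}\cap Y_{j}$ together with the incompatible closure/coclosure behaviour of $x$. However, the displayed chain of inequalities is garbled. Submodularity applied to $X_{1}$ and $Y_{1}$ gives
\[
\la(A_{11}) + \la(A_{22}\cup x) \le \la(X_{1}) + \la(Y_{1}) = 4,
\]
and applied to $X_{1}\cup x$ and $Y_{1}\cup x$ gives
\[
\la(A_{11}\cup x) + \la(A_{22}) \le 4.
\]
Your inequality instead asserts $\la(A_{11}\cup x)\le \la(A_{22}\cup x)+\la(\{x\})$, which is not a consequence of submodularity, and your chain would then force $\la(A_{11})+\la(A_{22}\cup x)\le 3$, which is stronger than what is actually true. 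You need the two correct inequalities above (and their analogues on the other diagonal) to proceed.

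Beyond that, what you have written is a plan rather than a proof: you acknowledge that the key case analysis --- ruling out each possible placement of $x$ relative to the closures and coclosures of the blocks --- is not carried out. That analysis is where the argument actually lives, and Proposition~\ref{guts} alone does not finish it; one typically shows that some $A_{ij}$ with $|A_{ij}|\ge 2$ has $\la(A_{ij})\le 1$ or that adding $x$ to a suitable block produces a genuine $2$\dash separation of $M$, and then checks that the rank and corank constraints on the $X_{i}$ and $Y_{j}$ prevent the degenerate escapes. Until that is written out, the proposal is an outline with a correct strategy but an incorrect key inequality and an unfinished core.
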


\section{Segment-cosegment pairs}
\label{segments}

Suppose that $M$ is a matroid.
Recall that $L$ is a segment of $M$ if $|L| \geq 3$ and
every three-element subset of $L$ is a circuit of $M$, and that
$L^{*}$ is a cosegment of $M$ if $|L^{*}| \geq 3$ and every
three-element subset of $L^{*}$ is a cocircuit.
We restate the definition of segment-cosegment pairs
given in Section~1.

\begin{dfn}
\label{dfn3}
Suppose that $L = \{x_{1},\ldots, x_{t}\}$ is a segment of the matroid
$M$ and there is a set $L^{*} = \{y_{1},\ldots, y_{t}\}$ with the
property that $L \cap L^{*} = \emptyset$ and
$(\cl(L) - x_{i}) \cup y_{i}$ is a cocircuit of $M$ for all
$i \in \{1,\ldots, t\}$.
In this case we say that $(L,\, L^{*})$ is a
\emph{segment-cosegment pair} of $M$.
\end{dfn}

In a $3$\dash connected matroid a segment-cosegment pair is an
example of a `crocodile', a structure that provides a collection of
equivalent $3$\dash separations.
`Crocodiles' were considered by Hall, Oxley, and
Semple~\cite{sequences}.
The next result explains the name segment-cosegment pair.

\begin{prop}
\label{prop9}
Suppose that $(L,\, L^{*})$ is a segment-cosegment pair of the
$3$\dash connected matroid $M$.
Then $L^{*}$ is a cosegment of $M$.
\end{prop}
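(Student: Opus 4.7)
My plan is to derive the conclusion from Proposition~\ref{cosegment} applied to $X = \cl(L)$ (a rank-two flat, in the non-degenerate setting $\ra(M) \geq 3$) and $A = L^{*}$. I need to verify three conditions: that $L^{*} \subseteq \cl^{*}(\cl(L))$, that $L^{*} \cap \cl(L) = \emptyset$ (so $L^{*} \subseteq E(M) - \cl(L)$), and that $\cl(L)$ is exactly $3$\dash separating in $M$. Together with $|L^{*}| = t \geq 3$, these yield that $L^{*}$ is a cosegment.

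The first condition is immediate: each cocircuit $(\cl(L) - x_{i}) \cup y_{i}$ places $y_{i}$ in $\cl^{*}(\cl(L) - x_{i}) \subseteq \cl^{*}(\cl(L))$.

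For disjointness I would argue by contradiction. Suppose some $y_{i} \in \cl(L)$; since $L \cap L^{*} = \emptyset$, in fact $y_{i} \in \cl(L) - L$, and the hypothesized cocircuit collapses to $\cl(L) - x_{i}$, so $(E(M) - \cl(L)) \cup x_{i}$ is a hyperplane. This forces $\ra(E(M) - \cl(L)) \geq \ra(M) - 2$ and hence $\la(\cl(L)) \leq 1$. Provided $|E(M) - \cl(L)| \geq 2$, this contradicts the $3$\dash connectivity of $M$. To ensure the size bound before knowing disjointness, I would first show that at most one $y_{j}$ can lie in $\cl(L)$: if both $y_{i},\,y_{j} \in \cl(L)$ for distinct $i,\, j$, then $\cl(L) - x_{i}$ and $\cl(L) - x_{j}$ are both cocircuits, and rank submodularity applied to the two hyperplanes $(E(M) - \cl(L)) \cup x_{i}$ and $(E(M) - \cl(L)) \cup x_{j}$ (using that $\{x_{i},\, x_{j}\}$ already spans the rank\dash two flat $\cl(L)$, so their union has rank $\ra(M)$) would pin $\ra(E(M) - \cl(L))$ down to $\ra(M) - 2$, yielding $\la(\cl(L)) = 0$ and again contradicting $3$\dash connectivity. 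Hence $|L^{*} - \cl(L)| \geq t - 1 \geq 2$, giving the needed size bound.

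With disjointness established, $|\cl(L)| \geq t \geq 3$ and $|E(M) - \cl(L)| \geq |L^{*}| \geq 3$, so $3$\dash connectivity of $M$ gives $\la(\cl(L)) \geq 2$. The trivial bound $\la(\cl(L)) \leq \ra(\cl(L)) = 2$ gives equality, so $\cl(L)$ is exactly $3$\dash separating and Proposition~\ref{cosegment} finishes the argument. The main obstacle is the disjointness step: one must first rule out the case of two $y_{j}$'s in $\cl(L)$ via the submodularity argument, and only then use the single\dash hyperplane rank bound to finish off the single\dash$y_{j}$ case; the rest is routine rank bookkeeping.
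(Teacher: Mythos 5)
Your proposal follows the same route as the paper's proof: apply Proposition~\ref{cosegment} with $X = \cl(L)$ and $A = L^{*}$, using the cocircuits $(\cl(L) - x_{i}) \cup y_{i}$ to see $L^{*} \subseteq \cl^{*}(\cl(L))$. The genuine difference is that you notice and address a gap in the paper's argument: Proposition~\ref{cosegment} requires $A \subseteq E(M) - X$, i.e.\ $L^{*} \cap \cl(L) = \emptyset$, whereas the definition of a segment-cosegment pair only guarantees $L \cap L^{*} = \emptyset$. The paper's proof simply does not check that $L^{*}$ avoids $\cl(L) - L$. Your two-step argument (submodularity of rank applied to the two hyperplanes $(E(M) - \cl(L)) \cup x_{i}$ and $(E(M) - \cl(L)) \cup x_{j}$ to rule out two $y$'s in $\cl(L)$, then the single-hyperplane bound to rule out one) correctly fills this in, and you also correctly flag the degenerate $\ra(M) = 2$ case which the paper's phrase ``$\cl(L)$ is exactly $3$\dash separating'' tacitly excludes. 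So your write-up is more complete than the paper's.

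One small slip worth fixing: you write that $(E(M) - \cl(L)) \cup x_{i}$ being a hyperplane ``forces $\ra(E(M) - \cl(L)) \geq \ra(M) - 2$ and hence $\la(\cl(L)) \leq 1$.'' A lower bound on $\ra(E(M) - \cl(L))$ gives a lower bound on $\la(\cl(L))$, not an upper bound. What you actually want is the upper bound $\ra(E(M) - \cl(L)) \leq \ra((E(M) - \cl(L)) \cup x_{i}) = \ra(M) - 1$, from which $\la(\cl(L)) = 2 + \ra(E(M) - \cl(L)) - \ra(M) \leq 1$. The conclusion is right; the cited inequality points the wrong way.
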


\begin{proof}
Suppose that $y_{i} \in L^{*}$.
The definition of a segment-cosegment pair means that
$y_{i} \in \cl^{*}(\cl(L))$.
Thus $L^{*} \subseteq \cl^{*}(\cl(L))$.
Moreover $\cl(L)$ is exactly $3$\dash separating in $M$.
The result follows by Proposition~\ref{cosegment}.
\end{proof}

\begin{prop}
\label{prop10}
Suppose that $(L,\, L^{*})$ is a segment-cosegment pair of the
$3$\dash connected matroid $M$.
Then $M / \cl(L)$ is $3$\dash connected.
\end{prop}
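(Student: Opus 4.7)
The plan is to suppose for a contradiction that $N:=M/\cl(L)$ admits a $k$\dash separation $(X,Y)$ for some $k\le 2$ with $|X|,|Y|\ge 2$, and to derive a $(\le 2)$\dash separation of $M$, contradicting the $3$\dash connectivity of $M$.

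First, just as in the proof of Proposition~\ref{prop9}, $\cl(L)$ is exactly $3$\dash separating in $M$, so $\lambda_M(\cl(L))=2$. For any $Z\subseteq E(M)-\cl(L)$ the identity $r_M(Z\cup\cl(L))=r_N(Z)+2$ (combined with $r(M)=r(N)+2$ and the rank definition of $\sqcap$) rearranges to give
\[
\lambda_M(X\cup\cl(L)) \;=\; \lambda_N(X)+\sqcap_M(Y,\cl(L)),
\]
and the same identity with $X$ and $Y$ interchanged. Since $M$ is $3$\dash connected and each of the partitions $(X\cup\cl(L),Y)$ and $(X,Y\cup\cl(L))$ has both parts of size at least $2$, each left-hand side is at least $2$; combined with $\lambda_N(X)\le 1$ this yields $\sqcap_M(X,\cl(L)),\sqcap_M(Y,\cl(L))\ge 1$.

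Next I split on $\lambda_N(X)$. If $\lambda_N(X)=0$, the identity forces $\sqcap_M(X,\cl(L))=\sqcap_M(Y,\cl(L))=2$, so $\cl(L)\subseteq\cl_M(X)\cap\cl_M(Y)$; submodularity of the rank function then pins $\cl_M(X)\cap\cl_M(Y)$ down to $\cl(L)$ exactly, and in particular $X\cap\cl_M(Y)=\emptyset=Y\cap\cl_M(X)$. If $\lambda_N(X)=1$, I use Proposition~\ref{prop2} to write $N=M_1\oplus_2 M_2$ along a basepoint $p$ with $E(M_1)=X\cup p$ and $E(M_2)=Y\cup p$. Since $L^*$ is a cosegment of $M$ (Proposition~\ref{prop9}) and disjoint from $\cl(L)$, it is a cosegment of $N$. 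If both $|L^*\cap X|\ge 2$ and $|L^*\cap Y|\ge 2$, then for $y_i,y_j\in L^*\cap X$ and $y_k,y_\ell\in L^*\cap Y$ the cocircuits $\{y_i,y_j,y_k\}$ and $\{y_i,y_j,y_\ell\}$ of $N$, being mixed across the $2$\dash sum, force $\{y_k,p\}$ and $\{y_\ell,p\}$ to be cocircuits of $M_2$; by cocircuit elimination $\{y_k,y_\ell\}$ is then a cocircuit of $M_2$, hence of $N$, hence of $M$, contradicting the $3$\dash connectivity of $M$. So after possibly swapping $X$ and $Y$ we may assume $|L^*\cap Y|\le 1$.

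The final step closes both branches by picking $y_i\in L^*\cap X$ and playing the segment cocircuit $(\cl(L)-x_i)\cup y_i$ off against a witness circuit (which exists because $\sqcap_M(Y,\cl(L))\ge 1$) that spans an element of $\cl(L)$ on the $Y$\dash side; cocircuit-circuit orthogonality then transfers elements of $\cl(L)$ across the partition in the spirit of Proposition~\ref{vertcl}, and sharpens $\lambda_M$ on some enlarged partition down to at most $1$---the desired contradiction. The main obstacle is this closing step: the local-connectivity identities alone leave balanced configurations (with either all $\sqcap$\dash values equal to $1$, or all equal to $2$) in which no single partition of $E(M)$ is immediately a $(\le 2)$\dash separation, and threading the segment cocircuits together with the cosegment structure of $L^*$ against the $(X,Y)$\dash partition is the delicate part of the argument.
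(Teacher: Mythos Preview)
Your proposal is not a proof: the ``final step'' is explicitly left undone, and you say so yourself. Everything up to that point is correct---the identity $\lambda_M(X\cup\cl(L))=\lambda_N(X)+\sqcap_M(Y,\cl(L))$ is valid, the conclusion $\sqcap_M(X,\cl(L)),\sqcap_M(Y,\cl(L))\ge 1$ is right, and your cocircuit-elimination argument in the $2$\dash sum really does force $|L^*\cap Y|\le 1$ after a swap. But none of this is a contradiction, and your closing paragraph only gestures at ``playing off'' a cocircuit against a circuit and ``sharpening $\lambda_M$ down''. That is exactly the content of the proposition, and it is missing. In the $\lambda_N(X)=0$ branch you stop after showing $X\cap\cl_M(Y)=\emptyset=Y\cap\cl_M(X)$, which is not yet a contradiction either.

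The paper's argument is shorter and avoids the $2$\dash sum detour entirely. Having established $\sqcap_M(X_i,\cl(L))\ge 1$ for $i=1,2$, it asks whether some $x_i\in L$ lies in $\cl_M(X_1)$. If so, a circuit $C_1\subseteq X_1\cup x_i$ through $x_i$ must meet each cocircuit $(\cl(L)-x_k)\cup y_k$ (for $k\ne i$) in a second element, forcing $y_k\in X_1$; the symmetric argument with $X_2$ would put $L^*-y_j\subseteq X_2$, which is impossible. Hence at most one of $\cl_M(X_1),\cl_M(X_2)$ meets $L$, and a short further orthogonality argument (adding a single $x_j$ to the other side to span $L$) rules that case out too. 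Finally, if neither closure meets $L$ then both local connectivities equal~$1$; adjoining $x_1$ to $X_1$ spans $x_2$, and orthogonality with $(\cl(L)-x_1)\cup y_1$ puts $y_1\in X_1$, while the symmetric argument puts $y_1\in X_2$. This is the ``threading'' you allude to, but it is concrete and takes only a few lines---the segment cocircuits $(\cl(L)-x_k)\cup y_k$ do all the work directly, without any need to invoke Proposition~\ref{prop2}, Proposition~\ref{prop9}, or the cosegment structure of $L^*$.
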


\begin{proof}
Suppose that $L = \{x_{1},\ldots, x_{t}\}$ and
$L^{*} = \{y_{1},\ldots, y_{t}\}$.
Assume that $M / \cl(L)$ is not $3$\dash connected, so that
$(X_{1},\, X_{2})$ is a $k$\dash separation of $M / \cl(L)$
for some $k \leq 2$.
Let $L_{0} = \cl(L)$.
Note that for $i \in \{1,\, 2\}$ we have
\begin{displaymath}
\ra_{M / L_{0}}(X_{i}) = \ra_{M}(X_{i} \cup L_{0}) - \ra_{M}(L_{0})
= \ra_{M}(X_{i}) - \sqcap_{M}(X_{i},\, L_{0}),
\end{displaymath}
so
$\ra_{M}(X_{i}) = \ra_{M / L_{0}}(X_{i}) + \sqcap_{M}(X_{i},\, L_{0})$.

Suppose that $\sqcap_{M}(X_{1},\, L_{0}) = 0$.
Then $\ra_{M}(X_{1}) = \ra_{M / L_{0}}(X_{1})$ and
$\ra_{M}(X_{2} \cup L_{0}) = \ra_{M / L_{0}}(X_{2}) + 2$,
so
\begin{multline*}
\la_{M}(X_{1}) = \ra_{M / L_{0}}(X_{1})
+ (\ra_{M / L_{0}}(X_{2}) + 2) - (\ra(M / L_{0}) + 2)\\
= \la_{M / L_{0}}(X_{1}) < k.
\end{multline*}
This is a contradiction as $M$ is $3$\dash connected.
By using a symmetric argument we can conclude that
$\sqcap_{M}(X_{i},\, L_{0}) > 0$ for all
$i \in \{1,\,2\}$.

Suppose that $x_{i} \in \cl_{M}(X_{1})$ for some
$i \in \{1,\ldots, t\}$.
Then there is a circuit $C_{1} \subseteq X_{1} \cup x_{i}$
such that $x_{i} \in C_{1}$.
For all $k \in \{1,\ldots, t\} - i$ the set
$(L_{0} - x_{k}) \cup y_{k}$ is a cocircuit.
It cannot be the case that $C_{1}$ meets this cocircuit
in a single element, so $y_{k} \in X_{1}$ for all
$k \in \{1,\ldots, t\} - i$.

Now suppose that $x_{j} \in \cl_{M}(X_{2})$ for some
$j \in \{1,\ldots, t\}$.
By using the same arguments as above
we can conclude that
$L^{*} - y_{j} \subseteq X_{2}$.
As $L^{*} - y_{i}$ and $L^{*} - y_{j}$ have a non-empty
intersection this is a contradiction.
Therefore $\cl_{M}(X_{2}) \cap L = \emptyset$.
Note that $\sqcap(X_{2},\, L_{0}) \leq 2$ because $\ra(L_{0}) = 2$.
If $\sqcap(X_{2},\, L_{0})$ were two, it would follow that
$L_{0} \subseteq \cl(X_{2})$.
Hence $\sqcap(X_{2},\, L_{0}) = 1$.

Let $j$ be an element of $\{1,\ldots, t\} - i$.
Then $L_{0} \subseteq \cl_{M}(X_{2} \cup x_{j})$, and there
must be a circuit $C_{2} \subseteq X_{2} \cup \{x_{i},\,x_{j}\}$
such that $\{x_{i},\, x_{j}\} \subseteq C_{2}$.
But then $C_{2}$ meets the cocircuit
$(L_{0} - x_{j}) \cup y_{j}$ in a single element, $x_{i}$.
From this contradiction we conclude that
$\cl_{M}(X_{1}) \cap L = \emptyset$, and by symmetry
$\cl_{M}(X_{2}) \cap L = \emptyset$.
This means that
\begin{displaymath}
\sqcap_{M}(X_{1},\, L_{0}) = \sqcap_{M}(X_{2},\, L_{0}) = 1. 
\end{displaymath}

It must be the case that $x_{2} \in \cl_{M}(X_{1} \cup x_{1})$,
and there is a circuit $C_{3} \subseteq X_{1} \cup \{x_{1},\, x_{2}\}$
such that $\{x_{1},\, x_{2}\} \subseteq C_{3}$.
Since $(L_{0} - x_{1}) \cup y_{1}$ is a cocircuit we conclude that
$y_{1} \in X_{1}$.
But we can use an identical argument to show that
$y_{1} \in X_{2}$.
This contradiction completes the proof.
\end{proof}

We now restate the definition of a spore.

\begin{dfn}
\label{dfn4}
Suppose that $P$ is a rank-one flat of a matroid $M$
and that $s$ is an element of $E(M)$ such that $P \cup s$ is a
cocircuit.
Then we say that $(P,\, s)$ is a \emph{spore}.
\end{dfn}

Recall from Section~\ref{intro} that a matroid $M$ is
$3$\dash connected up to a unique spore if it contains
a single spore $(P,\, s)$, and whenever $(X,\, Y)$
is a $k$\dash separation of $M$ for some $k < 3$ then
either $X \subseteq P \cup s$ or $Y \subseteq P \cup s$.

\begin{lemma}
\label{crocspore}
Suppose that $(L,\, L^{*})$ is a segment-cosegment pair
of the $3$\dash connected matroid $M$ where $|E(M) - \cl(L)| \geq 4$.
Let $L = \{x_{1},\ldots, x_{t}\}$ and
$L^{*} = \{y_{1},\ldots, y_{t}\}$.
Then $M / x_{i}$ is $3$\dash connected up to a unique
spore $(\cl(L) - x_{i},\, y_{i})$, for all $i \in \{1,\ldots, t\}$.
\end{lemma}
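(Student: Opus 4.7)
Let $L_0 = \cl(L)$. First I would verify that $(L_0 - x_i, y_i)$ is a spore of $M/x_i$: the set $L_0$ is a rank-2 flat of $M$ containing $x_i$, so $L_0 - x_i$ is a rank-1 flat of $M/x_i$; and the cocircuit $(L_0 - x_i) \cup y_i$ of $M$ does not contain $x_i$, so it remains a cocircuit of $M/x_i$.

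The bulk of the work is to show that every $k$-separation $(X, Y)$ of $M/x_i$ with $k < 3$ has one side contained in $(L_0 - x_i) \cup y_i$. A direct rank computation gives $\la_{M/x_i}(X) = \la_M(X) - 1$ when $x_i \in \cl_M(X)$ and $\la_{M/x_i}(X) = \la_M(X)$ otherwise; combined with $\la_M(X) \geq 2$ from 3-connectedness of $M$, this forces $k = 2$, $\la_M(X) = \la_M(Y) = 2$, and $x_i \in \cl_M(X) \cap \cl_M(Y)$. Next I would push to $M/L_0$, which is 3-connected by Proposition~\ref{prop10} and a minor of $M/x_i$ obtained by contracting the parallel class $L_0 - x_i$. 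Proposition~\ref{prop1} gives $\la_{M/L_0}(X - L_0) \leq 1$, and since $|E(M) - L_0| \geq 4$, after possibly swapping $X$ and $Y$ we may assume $|X - L_0| \leq 1$.

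A case analysis then handles $X \subseteq L_0$ (immediate) and $X = \{a, e\}$ with $a \in L_0 - x_i$, $e \notin L_0$ (here $\cl_M(\{a, e\})$ is a line distinct from $L_0$, so $x_i \notin \cl_M(X)$, contradicting the previous step). The remaining and main case is $X = A \cup \{e\}$ with $A \subseteq L_0 - x_i$, $|A| \geq 2$, and $e \notin L_0$; here I would apply submodularity of $\la_{M/x_i}$ to $X$ and the spore-cocircuit $D = (L_0 - x_i) \cup y_i$. Both are exact 2-separators of $M/x_i$, and the intersection $X \cap D = A$ lies in the parallel class $L_0 - x_i$ and so satisfies $\la_{M/x_i}(A) = 1$. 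Submodularity then forces $\la_{M/x_i}((L_0 - x_i) \cup \{e, y_i\}) \leq 1$; applying Proposition~\ref{prop1} projects this onto a 2-separator $\{e, y_i\}$ of $M/L_0$ whose complement has size $|E(M) - L_0| - 2 \geq 2$, contradicting 3-connectedness of $M/L_0$ unless $e = y_i$. This submodular uncrossing is the main obstacle and is precisely where the hypothesis $|E(M) - \cl(L)| \geq 4$ is essential.

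For uniqueness I would note that 3-connectedness of $M$ forbids 2-cocircuits in $M/x_i$, so every spore $(P, s)$ requires $|P| \geq 2$, making $P$ a non-trivial parallel class of $M/x_i$; such classes correspond to triangles of $M$ through $x_i$. Orthogonality of any such triangle $\{x_i, a, b\}$ with $a, b \notin L_0$ against the cocircuits $(L_0 - x_j) \cup y_j$ for $j \neq i$ forces $\{a, b\} \subseteq L^*$, but then a rank computation shows $\{a, b\}$ would be a parallel pair of $M/L_0$, contradicting its 3-connectedness together with $|E(M) - L_0| \geq 4$. Hence the only non-trivial parallel class is $L_0 - x_i$, so $P = L_0 - x_i$, and applying the main structural claim to the 2-separator $(L_0 - x_i) \cup s$ of $M/x_i$ then forces $s = y_i$.
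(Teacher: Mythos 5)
Your proof is correct, and it takes a genuinely different route from the paper's. The paper first establishes uniqueness of the spore by a case analysis on a hypothetical second spore $(P,\,s)$ (splitting on whether $P = \cl(L) - x_i$, then on $t$ and $|P|$), and only afterwards proves the structural claim about 2-separations, deriving the final contradiction by invoking the already-established uniqueness. You invert this order: you prove the structural claim first, via a direct submodular uncrossing of a putative 2-separator $X$ of $M/x_i$ against the known spore-cocircuit $D = (\cl(L) - x_i) \cup y_i$, pushing the resulting low-$\la$ set into $M/\cl(L)$ (a 3-connected matroid with at least four elements) to force $X \subseteq D$; you then deduce uniqueness from the structural claim. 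Your uniqueness step is also leaner: you reduce to nontrivial parallel classes of $M/x_i$, kill those disjoint from $\cl(L)$ by observing they project to parallel pairs of $M/\cl(L)$, and then use the structural claim to pin down $s = y_i$ when $P = \cl(L) - x_i$. One small imprecision: you say orthogonality forces $\{a,\,b\} \subseteq L^*$, but for $t > 3$ the orthogonality step already produces a contradiction outright (pigeonhole on the distinct $y_j$'s); in fact the inclusion in $L^*$ is not needed, since $a,\,b \notin \cl(L)$ already suffices to make $\{a,\,b\}$ a parallel pair of $M/\cl(L)$. Overall your argument avoids the paper's lengthier spore case analysis and keeps the two halves of the lemma more cleanly separated; the submodular-uncrossing mechanism you use is arguably more portable to related settings.
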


\begin{proof}
Let $E$ be the ground set of $M$ and let $L_{0} = \cl(L)$.
We will show that $M / x_{i}$ is $3$\dash connected up to the
unique spore $(L_{0} - x_{i},\, y_{i})$.
Certainly $(L_{0} - x_{i},\, y_{i})$ is a spore of $M / x_{i}$.
Suppose that $(P,\, s)$ is a spore of $M / x_{i}$ that is distinct
from $(L_{0} - x_{i},\, y_{i})$.

We initially assume that $L_{0} - x_{i} = P$.
Thus $s \ne y_{i}$.
As $(L_{0} - x_{i}) \cup s$ and $(L_{0} - x_{i}) \cup y_{i}$ are both
cocircuits of $M / x_{i}$ it follows that $E - (L_{0} \cup \{s,\, y_{i}\})$
is the intersection of two hyperplanes of $M / x_{i}$.
Thus
\begin{displaymath}
\ra_{M / x_{i}}(E - (L_{0} \cup \{s,\, y_{i}\})) \leq \ra(M / x_{i}) - 2.
\end{displaymath}
and therefore
\begin{displaymath}
\ra_{M / L_{0}}(E - (L_{0} \cup \{s,\, y_{i}\})) \leq \ra(M / x_{i}) - 2 =
\ra(M / L_{0}) - 1.
\end{displaymath}
Hence $\{s,\, y_{i}\}$ contains a cocircuit in $M / L_{0}$.
Therefore $M / L_{0}$ contains a cocircuit of size at most two, a
contradiction as $M / L_{0}$ is $3$\dash connected
by Proposition~\ref{prop10}, and $|E(M / L_{0})| \geq 4$.

Now we must assume that $L_{0} - x_{i} \ne P$.
Hence $P \cup x_{i}$ is a rank-two flat of $M$ that meets $L_{0}$
in exactly one element, $x_{i}$.
Suppose that $P$ contains a single element $p$.
Then $\{p,\, s\}$ is a cocircuit of $M$, a contradiction.
Therefore $P \cup x_{i}$ contains at least one triangle.
Suppose that $P$ does not contain $y_{j}$, where $j \ne i$.
Then there is a triangle in $P \cup x_{i}$ that meets the
cocircuit $(L_{0} - x_{j}) \cup y_{j}$ in exactly one element, $x_{i}$.
This contradiction shows that $L^{*} - y_{i} \subseteq P$.

Assume that $t > 3$.
As $L^{*}$ is a cosegment there is a triad of $M$ contained in
$L^{*} - y_{i}$.
However this triad is also contained in the segment $P \cup x_{i}$,
and is therefore a triangle.
But $|E(M)| > 4$ and a $3$\dash connected matroid with at least
five elements cannot contain a triangle that is also a triad.
This contradiction shows that $t = 3$.

Suppose $j \in \{1,\, 2,\, 3\}$ and that $j \ne i$.
If $|P| > 2$ then there is a triangle contained in $P$ that
contains $y_{j}$.
However this triangle would meet the cocircuit $(L_{0} - x_{j}) \cup y_{j}$
in exactly one element.
Thus $|P| = 2$, and $P = L^{*} - y_{i}$.

Suppose that $j,\, k \in \{1,\, 2,\, 3\}$ and neither
$j$ nor $k$ is equal to $i$.
Then $L_{0} \cup P$ contains the two cocircuits
$(L_{0} - x_{j}) \cup y_{j}$ and $(L_{0} - x_{k}) \cup y_{k}$.
Hence $\ra_{M}(E - (L_{0} \cup P)) \leq \ra(M) - 2$.
However it is easy to see that $\ra_{M}(L_{0} \cup P) = 3$.
As $|P| = 2$ it follows that $E - (L_{0} \cup P)$ contains
at least two elements.
Thus $(L_{0} \cup P,\, E - (L_{0} \cup P))$ is a
$2$\dash separation of $M$, a contradiction.

We have shown that $(L_{0} - x_{i},\, y_{i})$ is the unique
spore of $M / x_{i}$.
Next we show that $M / x_{i}$ is $3$\dash connected up to this spore.
Suppose that $(X,\, Y)$ is a $k$\dash separation of $M / x_{i}$
for some $k < 3$.
By relabeling if necessary we will assume that $y_{i} \in X$.
Assume that the result is false, so that neither $X$ nor
$Y$ is contained in $(L_{0} - x_{i}) \cup y_{i}$.
Therefore $X$ contains at least one element from
$E - (L_{0} \cup y_{i})$.
As $M / L_{0}$ is $3$\dash connected by Proposition~\ref{prop10}
we deduce from Proposition~\ref{prop1} that either $X - L_{0}$
or $Y - L_{0}$ contains at most one element.
We have already concluded that $X - L_{0}$ contains at least
two elements (as $y_{i} \in X$), so $Y - L_{0}$ contains
precisely one element.
As $M$ is $3$\dash connected it contains no parallel pairs,
so $M / x_{i}$ contains no loops.
Therefore $\ra_{M / x_{i}}(Y) = 2$, and hence
$\ra_{M / x_{i}}(X) \leq \ra(M / x_{i}) - 1$.
Thus $Y$ contains a cocircuit of $M / x_{i}$.
As $M / x_{i}$ has no coloops, and any cocircuit that meets a
parallel class contains that parallel class it follows that
$L_{0} - x_{i} \subseteq Y$.
Let $s$ be the single element in $Y - L_{0}$.
It cannot be the case that $Y$ is a cocircuit in $M / x_{i}$, for
that would imply that $(L_{0} - x_{i},\, s)$ is a spore of $M / x_{i}$
that differs from $(L_{0} - x_{i},\, y_{i})$, contradicting
our earlier conclusion.
Now we see that $Y - s = L_{0} - x_{i}$ must be a cocircuit of
$M / x_{i}$, but this is a contradiction as
$L_{0} - x_{i}$ is properly contained in the cocircuit
$(L_{0} - x_{i}) \cup y_{i}$.
The completes the proof.
\end{proof}

The next result shows that Theorem~\ref{thm1} is a
consequence of Theorem~\ref{main}.

\begin{prop}
\label{prop4}
Suppose that $(L,\, L^{*})$ is a segment-cosegment pair of a
matroid $M$, and that $M / \cl(L)$ is $3$\dash connected
and $|E(M) - \cl(L)| \geq 4$.
Let $L = \{x_{1},\ldots, x_{t}\}$ and
$L^{*} = \{y_{1},\ldots, y_{t}\}$.
Then $\co(\si(M / x_{i})) \iso M / \cl(L)$ for any element
$x_{i} \in L$.
\end{prop}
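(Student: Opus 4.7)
The plan is to identify the ground set of $\co(\si(M/x_{i}))$ with $E(M) - \cl(L)$ and to build a chain of equalities
\[
\co(\si(M/x_{i})) \;=\; \si(M/x_{i})/x_{j} \;=\; M/\{x_{i},\,x_{j}\}\setminus(\cl(L)-\{x_{i},\,x_{j}\}) \;=\; M/\cl(L),
\]
where $x_{j}$ is any chosen element of $L - x_{i}$. The last equality is immediate from the fact that $\{x_{i},\,x_{j}\}\subseteq L$ spans the rank-two flat $L_{0}=\cl(L)$, so $L_{0}-\{x_{i},\,x_{j}\}$ is a set of loops in $M/\{x_{i},\,x_{j}\}$, and deleting loops agrees with contracting them. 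So the work is to justify the first two equalities.

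For the middle equality (the simplification step), I would first observe that since $L_{0}$ is a rank-two flat of $M$, the set $L_{0}-x_{i}$ is a rank-one flat of $M/x_{i}$. I would then argue that this is the only nontrivial parallel class of $M/x_{i}$ and that $M/x_{i}$ has no loops: any loop $z\in E(M)-L_{0}$ of $M/x_{i}$ would give $z\in\cl_{M}(x_{i})\subseteq L_{0}$, a contradiction; and any parallel pair $\{z,z'\}$ of $M/x_{i}$ with $z,z'\notin L_{0}$ would persist in the further contraction $M/x_{i}/(L_{0}-x_{i})=M/L_{0}$, contradicting that $M/L_{0}$ is simple (which follows from $M/L_{0}$ being $3$\dash connected with $|E(M)-L_{0}|\geq 4$). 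A similar argument handles mixed parallel pairs. Taking $x_{j}$ as the representative of the parallel class $L_{0}-x_{i}$, this yields $\si(M/x_{i}) = M/x_{i}\setminus(L_{0}-\{x_{i},\,x_{j}\})$, from which contracting $x_{j}$ produces $M/L_{0}$ as above.

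For the first equality, I would use the cocircuit $C^{*}=(L_{0}-x_{i})\cup y_{i}$ guaranteed by the segment-cosegment pair. Since $x_{i}\notin C^{*}$, it is also a cocircuit of $M/x_{i}$; intersecting with the ground set of $\si(M/x_{i})$ yields $\{x_{j},\,y_{i}\}$, which is therefore a disjoint union of cocircuits of $\si(M/x_{i})$. Because $\si(M/x_{i})/x_{j} = M/L_{0}$ is $3$\dash connected with at least four elements, it has no coloops and no nontrivial series classes. This lets me rule out everything but a single series class: a coloop $c\neq x_{j}$ of $\si(M/x_{i})$ would remain a coloop of $\si(M/x_{i})/x_{j}=M/L_{0}$; if $\{x_{j}\}$ were itself a cocircuit then $\{y_{i}\}$ would be one as well, forcing $y_{i}$ to be a coloop of $M/L_{0}$; a series class of $\si(M/x_{i})$ either avoiding $x_{j}$ or properly containing $\{x_{j},\,y_{i}\}$ would descend in the same way to a nontrivial series structure of $M/L_{0}$. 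Hence $\{x_{j},\,y_{i}\}$ is the unique nontrivial series class and there are no coloops, so $\co(\si(M/x_{i})) = \si(M/x_{i})/x_{j}$.

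The main obstacle will be the structural bookkeeping in the simplification step: I must exclude every unintended loop or parallel pair in $M/x_{i}$ — in particular those that could involve elements of $L_{0}-L$ — using only that $M/L_{0}$ is $3$\dash connected with at least four elements rather than that $M$ itself is $3$\dash connected. Once that is in place, the cocircuit argument in the final paragraph is driven entirely by transporting cocircuits between $\si(M/x_{i})$ and its contraction by $x_{j}$, and the three displayed equalities chain together to give the desired isomorphism $\co(\si(M/x_{i}))\cong M/\cl(L)$.
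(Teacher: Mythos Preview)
Your proposal is correct and follows essentially the same approach as the paper: both arguments identify $L_{0}-x_{i}$ as the relevant parallel class in $M/x_{i}$, use the cocircuit $(L_{0}-x_{i})\cup y_{i}$ to produce the series pair $\{x_{j},\,y_{i}\}$ after simplifying, and invoke the $3$\dash connectedness of $M/L_{0}$ to rule out any further parallel or series structure. The only organizational difference is that you explicitly characterize all loops, parallel pairs, coloops, and series pairs of $M/x_{i}$ and $\si(M/x_{i})$ before simplifying and cosimplifying, whereas the paper works with generic sets $P$ and $S$ realizing $\co(\si(M/x_{i}))\cong M/x_{i}\setminus P/S$, commutes the operations to express this as a minor of $M/L_{0}$, and then uses $3$\dash connectedness of $M/L_{0}$ at the end to force $P=L_{0}-\{x_{i},\,x_{j}\}$ and $S=\{x_{j}\}$; this is slightly slicker since it avoids the case analysis you flag as the ``main obstacle''.
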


\begin{proof}
Let $L_{0} = \cl(L)$ and let $x_{j} \ne x_{i}$ be an element of $L$.
Suppose that $P$ and $S$ are disjoint subsets of $E(M) - x_{i}$
chosen so that $\co(\si(M / x_{i})) \iso M / x_{i} \del P / S$.
As $L_{0} - x_{i}$ is a parallel class in $M / x_{i}$ we may assume
that $L_{0} - \{x_{i},\, x_{j}\} \subseteq P$ and that $x_{j} \notin P$.
We may assume that $y_{i} \notin P$, and hence $\{x_{j},\, y_{i}\}$ is
a union of cocircuits in $M / x_{i} \del P$.
Therefore we may assume $x_{j} \in S$.
Since the elements in $L_{0} - \{x_{i},\, x_{j}\}$ are loops
in $M / x_{i} / x_{j}$ it follows that
\begin{displaymath}
M / x_{i} \del P / S = M / x_{i} / x_{j} / (L_{0} - \{x_{i},\, x_{j}\})
\del (P - (L_{0} - \{x_{i},\, x_{j}\})) / (S - x_{j}).
\end{displaymath}
This last matroid is equal to
$M / L_{0} \del (P - (L_{0} - \{x_{i},\, x_{j}\})) / (S - x_{j})$.
Since $M / L_{0}$ is $3$\dash connected and the elements
in $P - (L_{0} - \{x_{i},\, x_{j}\})$ are either loops or parallel
elements in $M / L_{0}$ it follows that
$P = L_{0} - \{x_{i},\, x_{j}\}$.
Thus $M / x_{i} \del P / S = M / L_{0} / (S - x_{j})$.
But $M / L_{0}$ is $3$\dash connected, so $S - x_{j}$ must be
empty.
Thus $M / L_{0} \iso \co(\si(M / x_{i}))$, as desired.
\end{proof}

\section{Preliminary lemmas}
\label{prelim}

\begin{prop}
\label{biglem}
Suppose that $C^{*}$ is a cocircuit of the $3$\dash connected
matroid $M$.
Assume that $(X_{1},\, X_{2},\, x)$ is a vertical $3$\dash partition of
$M$ such that $x\in C^{*}$.
Then $C^{*} \cap (X_{1} - \cl(X_{2})) \ne \emptyset$ and
$C^{*} \cap (X_{2} - \cl(X_{1})) \ne \emptyset$.
\end{prop}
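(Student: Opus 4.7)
The plan is to establish $C^{*} \cap (X_{1} - \cl(X_{2})) \neq \emptyset$; the second conclusion then follows by interchanging the roles of $X_{1}$ and $X_{2}$. I will argue by contradiction, assuming that $C^{*} \cap X_{1} \subseteq \cl(X_{2})$.

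Set $A = C^{*} \cap X_{1}$. The contradiction assumption gives $A \subseteq \cl(X_{2}) \subseteq \cl(X_{2} \cup x)$, which puts us in position to apply Proposition~\ref{vertcl}. This will yield that $(X_{1} - A,\, X_{2} \cup A,\, x)$ is again a vertical $3$\dash partition of $M$. The clause I intend to exploit is $x \in \cl(X_{1} - A)$: since $x \notin X_{1} - A$, there must then exist a circuit $C$ of $M$ with $x \in C \subseteq (X_{1} - A) \cup \{x\}$. By construction, every element of $C^{*} - x$ lies in $X_{2} \cup A$, so $(C - x) \cap (C^{*} - x) = \emptyset$ and hence $|C \cap C^{*}| = 1$. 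This violates the orthogonality of circuits and cocircuits and delivers the contradiction.

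The only delicate step is justifying the application of Proposition~\ref{vertcl}, which in the degenerate case $A = X_{1}$ would leave the ``$X_{1}$-side'' empty. To rule this out I will observe that $A = X_{1}$ forces $X_{1} \subseteq \cl(X_{2})$. Combined with $x \in \cl(X_{2})$ (part of the vertical partition hypothesis), this gives $r(X_{2} \cup x) = r(X_{2})$ and $r(M) = r(X_{1} \cup X_{2} \cup x) = r(X_{2})$. The definition of the connectivity function then collapses to
\begin{displaymath}
\la(X_{1}) = r(X_{1}) + r(X_{2} \cup x) - r(M) = r(X_{1}),
\end{displaymath}
and since the vertical $3$\dash partition gives $r(X_{1}) \geq 3$, this contradicts $\la(X_{1}) = 2$. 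Thus $A$ is a proper subset of $X_{1}$, Proposition~\ref{vertcl} applies as needed, and the argument above goes through.
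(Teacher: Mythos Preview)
Your proof is correct and follows essentially the same strategy as the paper: assume $C^{*}\cap X_{1}\subseteq\cl(X_{2})$, produce a circuit through $x$ lying entirely in $X_{1}$ away from the rest of $C^{*}$, and invoke orthogonality. The only difference is cosmetic: the paper argues directly that $x\in\cl(X_{1}-\cl(X_{2}))$ via a $2$\dash separation argument, whereas you obtain $x\in\cl(X_{1}-A)$ by appealing to Proposition~\ref{vertcl}; both routes yield the needed circuit.
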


\begin{proof}
Note that $\ra(X_{1}),\, \ra(X_{2}) \geq 3$ implies that
$|E(M)| \geq 4$, so every circuit and cocircuit of $M$ contains
at least three elements.
Let $X$ be $X_{1} - \cl(X_{2})$.
The fact that $\ra(X_{1}) \geq 3$ implies that $X$ contains a
cocircuit, so $|X| \geq 3$.
Suppose that $x$ is not in $\cl(X)$.
Then $\ra(X) < \ra(X_{1})$.
Since $|X| \geq 3$ this implies that $(X,\, \cl(X_{2}))$ is
a $2$\dash separation of $M$, a contradiction.

Now suppose that $C^{*} \subseteq \cl(X_{2})$.
Then as $x \in \cl(X)$ and $x \in C^{*}$ there is a
circuit in $M$ that meets $C^{*}$ in exactly one element, $x$.
This is a contradiction.
The same argument shows that
$C^{*} \cap (X_{2} - \cl(X_{1})) \ne \emptyset$, so
the proposition holds.
\end{proof}

\begin{dfn}
\label{dfn5}
Suppose that $M$ is a $3$\dash connected matroid and that $A$
is a subset of $E(M)$.
A \emph{minimal partition} with respect to $A$ is a
vertical $3$\dash partition $(X_{1},\, X_{2},\, x)$ of $M$
that satisfies the following properties:
\begin{enumerate}[(i)]
\item $x \in A$;
\item if $(Y_{1},\, Y_{2},\, y)$ is a vertical $3$\dash partition
of $M$ such that $y \in A \cap (X_{1} \cup x)$ and
$X_{2} \cap Y_{1} = \emptyset$, then
$(Y_{1},\, Y_{2},\, y) = (X_{1},\, X_{2},\, x)$; and,
\item if $(Y_{1},\, Y_{2},\, y)$ is a vertical $3$\dash partition
of $M$ such that $y \in A \cap (X_{1} \cup x)$ and
$X_{2} \cap Y_{2} = \emptyset$ then
$(Y_{2},\, Y_{1},\, y) = (X_{1},\, X_{2},\, x)$.
\end{enumerate}
\end{dfn}

If there is no ambiguity we will refer to a minimal partition
with respect to $A$ as a \emph{minimal partition}.

\begin{lemma}
\label{minimal}
Suppose that $M$ is a $3$\dash connected matroid and that $A$ is a
subset of $E(M)$.
Suppose that for some element $z \in A$ there is a
vertical $3$\dash partition $(Z_{1},\, Z_{2},\, z)$ of $M$.
Let $Z = Z_{1} - \cl(Z_{2})$.
Then there is a minimal partition $(X_{1},\, X_{2},\, x)$ with respect
to $A$ such that $X_{1} \subseteq Z$ and $x \in A \cap (Z \cup z)$.
\end{lemma}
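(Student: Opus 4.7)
The plan is to exhibit a minimal partition by selecting one from a natural finite collection of candidate vertical $3$\dash partitions according to a size parameter, after first using Proposition~\ref{vertcl} to put the given partition $(Z_{1}, Z_{2}, z)$ into a form whose first part already lies inside $Z$.

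First I would apply Proposition~\ref{vertcl} to $(Z_{1}, Z_{2}, z)$ with the subset $Z_{1} \cap \cl(Z_{2})$: since $z \in \cl(Z_{2})$, the hypothesis $A \subseteq \cl(X_{2} \cup x)$ of that proposition is met, and the output is the vertical $3$\dash partition $(Z,\, E(M) - Z - z,\, z)$ of $M$. Let $\mathcal{C}$ denote the set of all vertical $3$\dash partitions $(X_{1}, X_{2}, x)$ of $M$ satisfying $x \in A$ and $X_{1} \cup x \subseteq Z \cup z$; this set contains the partition just constructed. I then choose $(X_{1}, X_{2}, x) \in \mathcal{C}$ minimizing $|X_{1}|$. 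If $X_{1} \cap \cl(X_{2})$ were non-empty, Proposition~\ref{vertcl} would transfer it into $X_{2}$ and produce a strictly smaller member of $\mathcal{C}$, so in fact $X_{1} \cap \cl(X_{2}) = \emptyset$. Because $Z_{2} \subseteq E(M) - Z - z \subseteq X_{2}$ and $z \in \cl(Z_{2})$, we also have $z \in \cl(X_{2})$, hence $z \notin X_{1}$; combined with $X_{1} \cup x \subseteq Z \cup z$ this yields $X_{1} \subseteq Z$ and $x \in A \cap (Z \cup z)$.

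It remains to check conditions (ii) and (iii) of Definition~\ref{dfn5}. Given a vertical $3$\dash partition $(Y_{1}, Y_{2}, y)$ with $y \in A \cap (X_{1} \cup x)$ and $X_{2} \cap Y_{1} = \emptyset$, we have $Y_{1} \cup y \subseteq X_{1} \cup x \subseteq Z \cup z$, so $(Y_{1}, Y_{2}, y) \in \mathcal{C}$ and $|Y_{1}| \leq |X_{1}|$, while minimality gives $|Y_{1}| \geq |X_{1}|$. Equality forces $Y_{1} \cup y = X_{1} \cup x$ and $Y_{2} = X_{2}$. A second use of Proposition~\ref{vertcl} together with minimality applied to $(Y_{1}, Y_{2}, y)$ forces $Y_{1} \cap \cl(Y_{2}) = \emptyset$; but if $y \ne x$ then $x \in Y_{1} \cap \cl(X_{2}) = Y_{1} \cap \cl(Y_{2})$, a contradiction. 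Hence $y = x$ and $(Y_{1}, Y_{2}, y) = (X_{1}, X_{2}, x)$, giving (ii); (iii) follows by the same argument applied to $(Y_{2}, Y_{1}, y)$. The hard part is precisely this uniqueness check: a priori there could be two distinct minimum-size members of $\mathcal{C}$ that merely swap the distinguished element inside $X_{1} \cup x$, and the closure-avoidance condition $X_{1} \cap \cl(X_{2}) = \emptyset$, enforced by the minimization together with Proposition~\ref{vertcl} and combined with the fact that $x \in \cl(X_{2})$ automatically holds for any vertical $3$\dash partition, is what rules this out.
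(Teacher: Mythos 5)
Your proof is correct and follows essentially the same strategy as the paper's: use Proposition~\ref{vertcl} to pass to a partition whose small side lies inside $Z$, minimize the size of the small side over a suitable family of vertical $3$\dash partitions, and then use the resulting closure-avoidance $X_1 \cap \cl(X_2) = \emptyset$ together with $x \in \cl(X_2)$ to pin down any competitor $(Y_1, Y_2, y)$. The only cosmetic difference is that you perform a single minimization over $\mathcal{C}$, whereas the paper first minimizes $Z_1'$ over partitions at $z$ and then separately minimizes $X_1 \cup x$ over partitions with $s \ne z$, splitting into two cases; your formulation absorbs both cases at once.
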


\begin{proof}
Let \mcal{Z} be the family of vertical $3$\dash partitions
$(S_{1},\, S_{2},\, z)$ with the property that
$S_{1} \subseteq Z_{1}$.
Choose $(Z_{1}',\, Z_{2}',\, z)$ from \mcal{Z} so that if
$(S_{1},\, S_{2},\, z)$ is in \mcal{Z}, then
$S_{1}$ is not properly contained in $Z_{1}'$.
Observe that Proposition~\ref{vertcl} implies that
$Z_{1}' \subseteq Z$.

Let $\mcal{S}$ be the family of vertical $3$\dash partitions
$(S_{1},\, S_{2},\, s)$ with $s \in A \cap (Z_{1}' \cup z)$.
Let $\mcal{S}_{0}$ be the set of vertical $3$\dash partitions
$(S_{1},\, S_{2},\, s)$ in \mcal{S} with the property that
either $S_{1} \subseteq Z_{1}'$ or $S_{2} \subseteq Z_{1}'$.
Without loss of generality we will assume that
if $(S_{1},\, S_{2},\, s)$ is in $\mcal{S}_{0}$ then
$S_{1} \subseteq Z_{1}'$.
Suppose that $(S_{1},\, S_{2},\, z)$ is a member of
$\mcal{S}_{0}$.
Then our choice of $(Z_{1}',\, Z_{2}',\, z)$ means that
$S_{1} = Z_{1}'$ and $S_{2} = Z_{2}'$.
If $(Z_{1}',\, Z_{2}',\, z)$ is the only member of
$\mcal{S}_{0}$ then we can set $(X_{1},\, X_{2},\, x)$
to be $(Z_{1}',\, Z_{2}',\, z)$, and we will be done.
Therefore we will assume that there is at least one
vertical $3$\dash partition $(S_{1},\, S_{2},\, s)$ in
$\mcal{S}_{0}$ such that $s \ne z$.
Let $\mcal{S}_{1}$ be the collection of such partitions.

We now let $(X_{1},\, X_{2},\, x)$ be a vertical $3$\dash partition
in $\mcal{S}_{1}$ chosen so that if
$(S_{1},\, S_{2},\, s) \in \mcal{S}_{1}$, then
$S_{1} \cup s$ is not properly contained in $X_{1} \cup x$.
We will prove that $(X_{1},\, X_{2},\, x)$ is the desired
vertical $3$\dash partition.

It is certainly true that $X_{1} \subseteq Z$.
If there is some element $e$ in $X_{1} \cap \cl(X_{2} \cup x)$
then $(X_{1} - e,\, X_{2} \cup e,\, x)$ is a vertical $3$\dash partition
by Proposition~\ref{vertcl}.
However this contradicts our choice of $(X_{1},\,X_{2},\,x)$.
Therefore $X_{2} \cup x$ is a flat.
We assume that $(Y_{1},\, Y_{2},\, y)$ is
a vertical $3$\dash partition and that $y \in A \cap (X_{1} \cup x)$.
As $X_{1} \subseteq Z_{1}'$ it follows that $y \in A \cap Z_{1}'$.
Our assumption on $(X_{1},\, X_{2},\, x)$ means that
neither $Y_{1} \cup y$ nor $Y_{2} \cup y$ can be properly
contained in $X_{1} \cup x$.

Suppose that $X_{2} \cap Y_{1} = \emptyset$.
Then $Y_{1} \cup y$ must be equal to $X_{1} \cup x$.
If $y \ne x$ then the fact that $y \in \cl(Y_{2})$ and
$Y_{2} = X_{2}$ means that $y \in \cl(X_{2})$,
which is a contradiction as $X_{2} \cup x$ is a flat.
Therefore $y = x$, so $(Y_{1},\, Y_{2},\, y)$ is equal to
$(X_{1},\, X_{2},\, x)$.
The same argument shows that if $X_{2} \cap Y_{2} = \emptyset$
then $(Y_{1},\, Y_{2},\, y) = (X_{2},\, X_{1},\, x)$.
Thus $(X_{1},\, X_{2},\, x)$ is the desired
minimal partition.
\end{proof}

\begin{prop}
\label{prop11}
Suppose that $M$ is a matroid and that
$A \subseteq E(M)$.
Suppose that $(X_{1},\, X_{2},\, x)$ is a minimal partition
with respect to $A$.
Then $X_{2} \cup x$ is a flat of $M$.
\end{prop}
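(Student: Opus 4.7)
My plan is to argue by contradiction, leveraging the fact that Proposition~\ref{vertcl} lets us transfer any element in $X_{1} \cap \cl(X_{2} \cup x)$ into the $X_{2}$ side while preserving the vertical $3$\dash partition structure. The essential observation is that Definition~\ref{dfn1} of a minimal partition, when instantiated with $y = x$, rules out exactly this kind of ``shrinking'' of $X_{1}$.

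Suppose, for contradiction, that $X_{2} \cup x$ is not a flat of $M$. Since $(X_{1},\, X_{2},\, x)$ is a partition of $E(M)$, the complement of $X_{2} \cup x$ is precisely $X_{1}$, so there exists an element $e \in X_{1}$ with $e \in \cl(X_{2} \cup x)$. I would then apply Proposition~\ref{vertcl} (taking the singleton $A = \{e\}$) to obtain that $(X_{1} - e,\, X_{2} \cup e,\, x)$ is also a vertical $3$\dash partition of $M$.

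Now I set $(Y_{1},\, Y_{2},\, y) = (X_{1} - e,\, X_{2} \cup e,\, x)$ and check the hypotheses of clause~(ii) of Definition~\ref{dfn5}. Since $x \in A$ by property~(i) of the original minimal partition, we have $y = x \in A \cap (X_{1} \cup x)$. Moreover, $X_{2} \cap Y_{1} = X_{2} \cap (X_{1} - e) = \emptyset$ because $X_{1}$ and $X_{2}$ are disjoint. Clause~(ii) then forces $(Y_{1},\, Y_{2},\, y) = (X_{1},\, X_{2},\, x)$, which contradicts $Y_{1} = X_{1} - e \subsetneq X_{1}$.

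There is no real obstacle here; the proof is essentially a two-line application of Proposition~\ref{vertcl} together with the minimality clause~(ii). The only point that requires a little care is verifying that the element $e$ exists in $X_{1}$ (and not in $\{x\}$ itself)---but this is immediate from the fact that $\{x\} \subseteq X_{2} \cup x$ already, so any element witnessing the non-closedness of $X_{2} \cup x$ must lie in $X_{1}$. This mirrors the argument already used inside the proof of Lemma~\ref{minimal}, so I would simply extract that observation as a stand-alone proposition.
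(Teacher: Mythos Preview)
Your argument is correct and essentially identical to the paper's: both take an element of $X_{1} \cap \cl(X_{2} \cup x)$, apply Proposition~\ref{vertcl} to obtain the vertical $3$\dash partition $(X_{1} - e,\, X_{2} \cup e,\, x)$, and observe that this contradicts minimality via clause~(ii) of Definition~\ref{dfn5}. One small slip: in your opening paragraph you cite ``Definition~\ref{dfn1}'' for the minimal partition, but that is the definition of a vertical $k$\dash partition; the intended reference is Definition~\ref{dfn5}, which you do cite correctly in the body of the argument.
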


\begin{proof}
Suppose that there is some element $z \in X_{1} \cap \cl(X_{2} \cup x)$.
Then $(X_{1} - z,\, X_{2} \cup z,\, x)$ is a vertical $3$\dash partition
of $M$ by Proposition~\ref{vertcl}.
This contradicts the fact that $(X_{1},\, X_{2},\, x)$ is a minimal
partition.
\end{proof}

\begin{lemma}
\label{crossetc}
Suppose that $M$ is a $3$\dash connected matroid and that
$A \subseteq E(M)$.
Suppose that $(X_{1},\, X_{2},\, x)$ is a minimal partition
with respect to $A$.
Suppose also that $(Y_{1},\, Y_{2},\, y)$ is a vertical
$3$\dash partition of $M$ such that $y \in A \cap X_{1}$ and
$x \in Y_{1}$.
Then the following statements hold:
\begin{enumerate}[(i)]
\item\label{cross}
$X_{i} \cap Y_{j} \ne \emptyset$ for all $i,j \in \{1,\, 2\}$;
\item\label{separating}
Each of $X_{1} \cap Y_{2}$, $(X_{1} \cap Y_{2}) \cup y$,
$X_{2} \cap Y_{1}$, $(X_{2} \cap Y_{1}) \cup x$, and
$X_{2} \cap Y_{2}$ is $3$\dash separating in $M$;
\item\label{separating2}
$(X_{1} \cap Y_{1}) \cup \{x,\, y\}$ is $4$\dash separating in $M$;
\item\label{air}
Neither $X_{1} \cap Y_{1}$ nor $X_{1} \cap Y_{2}$ is contained
in $\cl(X_{2})$,
$X_{1} \cap Y_{1} \nsubseteq \cl(Y_{2})$, and
$X_{1} \cap Y_{2} \nsubseteq \cl(Y_{1})$;
\item\label{rank} $\ra((X_{1} \cap Y_{2}) \cup y) = 2$; and,
\item\label{rank2} If $(X_{1} \cap Y_{1}) \cup \{x,\, y\}$ is
$3$\dash separating in $M$, then
$\ra((X_{1} \cap Y_{1}) \cup \{x,\, y\}) = 2$.
\end{enumerate}
\end{lemma}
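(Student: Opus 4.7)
I would handle the six parts in order, each drawing on those before it.

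For part (i), if $X_1 \cap Y_j = \emptyset$ for some $j \in \{1,2\}$, then $Y_j \subseteq X_2 \cup \{x\}$; since $y \in \cl(Y_j)$ and $X_2 \cup \{x\}$ is a flat by Proposition~\ref{prop11}, we get $y \in X_2 \cup \{x\}$, contradicting $y \in X_1$. If instead $X_2 \cap Y_j = \emptyset$, condition (ii) or (iii) of Definition~\ref{dfn5} forces $(Y_1, Y_2, y)$ to coincide with $(X_1, X_2, x)$ up to swapping, whence $y = x$, again contradicting $y \in X_1$.

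Parts (ii) and (iii) are submodularity computations in which one repeatedly uses that $X_i \cup \{x\}$ and $Y_i \cup \{y\}$ all have $\lambda = 2$ (since the extra element lies in the appropriate closure). For instance, uncrossing $X_1$ with $Y_2$ gives $\lambda(X_1 \cap Y_2) + \lambda(X_1 \cup Y_2) \leq 4$; the complement of $X_1 \cup Y_2$ equals $(X_2 \cap Y_1) \cup \{x\}$, whose two sides have at least two elements each by (i), so 3-connectivity yields $\lambda \geq 2$ and hence $\lambda(X_1 \cap Y_2) \leq 2$. The remaining sets in (ii) are handled analogously, and (iii) drops out of uncrossing $X_1 \cup \{x\}$ with $Y_1 \cup \{y\}$, whose union $X_1 \cup Y_1$ has complement $X_2 \cap Y_2$, nonempty by (i).

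Parts (iv)(a) and (b) are immediate from $\cl(X_2) = X_2 \cup \{x\}$ (Proposition~\ref{prop11}) combined with (i): any subset of $X_1 \cap Y_j$ lying in $\cl(X_2)$ must be empty. Parts (iv)(c) and (d) form the main obstacle of the lemma, and the idea is to invert the usual role of minimality. Suppose $X_1 \cap Y_1 \subseteq \cl(Y_2) = \cl(Y_2 \cup \{y\})$; by Proposition~\ref{vertcl}, sliding $X_1 \cap Y_1$ from $Y_1$ into $Y_2$ produces a vertical $3$-partition $(Y_1', Y_2', y)$ whose first side $Y_1' = (X_2 \cap Y_1) \cup \{x\}$ lies entirely within $X_2 \cup \{x\}$. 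The vertical-$3$-partition property then forces $y \in \cl(Y_1') \subseteq \cl(X_2 \cup \{x\}) = X_2 \cup \{x\}$, contradicting $y \in X_1$. Claim (d) is entirely symmetric.

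For (v), apply Proposition~\ref{guts} to the partition $(X_1 \cap Y_2,\, (X_1 \cap Y_1) \cup X_2 \cup \{x\},\, y)$: both sides are exactly $3$-separating (by (ii) and a short submodularity computation) with $\lambda = 2$, and $y \in \cl(Y_1)$ lies in the closure of the second side, so the ``not both'' clause forces $y \in \cl(X_1 \cap Y_2)$, and therefore $r((X_1 \cap Y_2) \cup \{y\}) = r(X_1 \cap Y_2)$. To pin this rank at $2$, I would exhibit the exact $3$-partition $((X_1 \cap Y_2) \cup \{y\},\, X_2,\, (X_1 \cap Y_1) \cup \{x\})$ and apply Corollary~\ref{flowers1}; combined with Proposition~\ref{prop7} (using (iv)(a) to bound $|X_1 \cap Y_2 \cap \cl(X_2)|$), this pins the common local connectivity, yielding the rank bound. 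Part (vi) runs the same way: assuming $(X_1 \cap Y_1) \cup \{x, y\}$ is $3$-separating, two applications of Proposition~\ref{guts} give $y \in \cl((X_1 \cap Y_1) \cup \{x\})$ and $x \in \cl((X_1 \cap Y_1) \cup \{y\})$, and Corollary~\ref{flowers1} applied to the exact $3$-partition $((X_1 \cap Y_1) \cup \{x, y\},\, X_2,\, X_1 \cap Y_2)$, together with the rank value $r(X_1 \cap Y_2) = 2$ obtained in (v), forces $r((X_1 \cap Y_1) \cup \{x, y\}) = 2$.
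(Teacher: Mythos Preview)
Your treatment of (i)--(iv) matches the paper's proof essentially line for line; the use of Proposition~\ref{prop11} and Proposition~\ref{vertcl} is exactly what the paper does.

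The gap is in (v) and (vi), where you try to pin down the ranks by a local-connectivity computation instead of invoking minimality. This does not close. Take (vi): granting your exact $3$-partition $\bigl((X_{1}\cap Y_{1})\cup\{x,y\},\,X_{2},\,X_{1}\cap Y_{2}\bigr)$ and using $\ra(X_{1}\cap Y_{2})=2$ together with (iv) to force the common local connectivity $\pi=1$, the rank identity $\ra(A)+\ra(B)+\ra(C)=\ra(M)+2+\pi$ gives
\[
\ra\bigl((X_{1}\cap Y_{1})\cup\{x,y\}\bigr)=\ra(M)+1-\ra(X_{2})=\ra(X_{1})-1,
\]
which is $2$ only when $\ra(X_{1})=3$; nothing you have established bounds $\ra(X_{1})$. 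The same circularity infects (v): your proposed $3$-partition needs $(X_{1}\cap Y_{1})\cup x$ to be exactly $3$-separating, which is not in (ii), and even if you obtain it the rank formula again returns $\ra(X_{1})-1$ for one of the two unknown ranks rather than $2$. Proposition~\ref{prop7} does not help here, since it draws conclusions \emph{from} $\sqcap=1$ rather than establishing rank bounds, and the emptiness of $(X_{1}\cap Y_{2})\cap\cl(X_{2})$ does not by itself bound $\sqcap\bigl((X_{1}\cap Y_{2})\cup y,\,X_{2}\bigr)$.

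What the paper does instead is use minimality directly: for (v), if $\ra((X_{1}\cap Y_{2})\cup y)\geq 3$ then (after the guts step you already have) $(X_{1}\cap Y_{2},\,X_{2}\cup Y_{1},\,y)$ is a vertical $3$-partition with $y\in A\cap X_{1}$ whose first side is disjoint from $X_{2}$, contradicting Definition~\ref{dfn5}. For (vi) the same trick works with $((X_{1}\cap Y_{1})\cup x,\,X_{2}\cup Y_{2},\,y)$. Minimality is not optional in these two parts; without it the rank can genuinely exceed $2$.
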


\begin{proof}
We start by proving~\eqref{cross}.
Since $y \ne x$ the definition of a minimal partition means that
$X_{2} \cap Y_{1} \ne \emptyset$ and
$X_{2} \cap Y_{2} \ne \emptyset$.
Moreover $X_{2} \cup x$ is a flat of $M$ by Proposition~\ref{prop11},
and $y \in X_{1}$, so $y \notin \cl(X_{2} \cup x)$.
However $y \in \cl(Y_{1}) \cap \cl(Y_{2})$.
It follows that neither $Y_{1}$ nor $Y_{2}$ can be contained
in $X_{2} \cup x$.
Thus both $Y_{1}$ and $Y_{2}$ meet $X_{1}$.

Next we prove~\eqref{separating}.
Consider $X_{1} \cap Y_{2}$.
Since $\la(X_{1}) = 2$ and $\la(Y_{2}) = 2$ the
submodularity of the connectivity function implies that
$\la(X_{1} \cap Y_{2}) + \la(X_{1} \cup Y_{2}) \leq 4$.
If $X_{1} \cap Y_{2}$ is not $3$\dash separating then
$\la(X_{1} \cup Y_{2}) \leq 1$.
However $|X_{1} \cup Y_{2}| \geq 2$ and the complement of
$X_{1} \cup Y_{2}$ certainly contains at least two elements,
since it contains $x$, and $X_{2} \cap Y_{1}$ is non-empty.
Thus $M$ has a $2$\dash separation, a contradiction.
This shows that $X_{1} \cap Y_{2}$ is $3$\dash separating.

Since $X_{1}$ and $Y_{2} \cup y$ are both $3$\dash separating
the same argument shows that $(X_{1} \cap Y_{2}) \cup y$
is $3$\dash separating.
Since the complement of $X_{2} \cup Y_{1}$ contains both $y$
and at least one element in $X_{1} \cap Y_{2}$, we can also show
that $X_{2} \cap Y_{1}$ and $(X_{2} \cap Y_{1}) \cup x$ are both
$3$\dash separating.
The same argument shows that $X_{2} \cap Y_{2}$ is $3$\dash separating.

Consider~\eqref{separating2}.
The submodularity of the connectivity function shows that
\begin{displaymath}
\la((X_{1} \cap Y_{1}) \cup \{x,\, y\}) + \la(X_{1} \cup Y_{1}) \leq 4.
\end{displaymath}
Thus if $(X_{1} \cap Y_{1}) \cup \{x,\,y\}$ is not
$4$\dash separating then $\la(X_{1} \cup Y_{1}) = 0$.
But this cannot occur as $X_{1} \cup Y_{1}$ is non-empty, and
its complement contains $X_{2} \cap Y_{2}$, which is non-empty.

Next we move to~\eqref{air}.
Since $X_{2} \cup x$ is a flat of $M$ it follows that
$\cl(X_{2})$ does not meet $X_{1}$.
Therefore $\cl(X_{2})$ cannot contain $X_{1} \cap Y_{1}$ or
$X_{1} \cap Y_{2}$.

Suppose that $X_{1} \cap Y_{1}$ is contained in $\cl(Y_{2})$.
Then $Y_{1} - \cl(Y_{2})$ is contained in $X_{2} \cup x$.
However Proposition~\ref{vertcl} says that
\begin{displaymath}
(Y_{1} - \cl(Y_{2}),\, \cl(Y_{2}) - y,\, y)
\end{displaymath}
is a vertical $3$\dash partition of $M$.
Thus $y$ is in the closure of $Y_{1} - \cl(Y_{2})$,
which means that $y \in \cl(X_{2} \cup x)$.
But this is a contradiction as $y \in X_{1}$, and
$X_{2} \cup x$ is a flat of $M$.
The same argument shows that $X_{1} \cap Y_{2}$ is not contained
in $\cl(Y_{1})$.

To prove~\eqref{rank} we suppose that
$\ra((X_{1} \cap Y_{2}) \cup y) \geq 3$.
Consider the partition $(X_{1} \cap Y_{2},\, X_{2} \cup Y_{1},\, y)$
of $E(M)$.
It follows from~\eqref{separating} that
\begin{displaymath}
\la((X_{1} \cap Y_{2}) \cup y) = \la(X_{1} \cap Y_{2}) = 2,
\end{displaymath}
so $\la(X_{2} \cup Y_{1}) = 2$.
Furthermore $y \in \cl(Y_{1})$, so $y$ is in the closure
of $X_{2} \cup Y_{1}$.
Proposition~\ref{guts} shows that $y \in \cl(X_{1} \cap Y_{2})$,
so $\ra(X_{1} \cap Y_{2}) \geq 3$.
Now it is easy to see that
\begin{displaymath}
(X_{1} \cap Y_{2},\, X_{2} \cup Y_{1},\, y)
\end{displaymath}
is a vertical $3$\dash partition of $M$.
However $y \in A \cap X_{1}$ and $X_{1} \cap Y_{2}$ does not
meet $X_{2}$, so we have a contradiction to the fact that
$(X_{1},\, X_{2},\, x)$ is a minimal partition.

We conclude by proving~\eqref{rank2}.
Suppose that $\la((X_{1} \cap Y_{1}) \cup \{x,\, y\}) = 2$.
This implies that $\la(X_{2} \cup Y_{2}) = 2$.
Since $y \in \cl(Y_{2})$ it follows easily that
$\la((X_{1} \cap Y_{1}) \cup x) = 2$.
Consider the partition
\begin{displaymath}
((X_{1} \cap Y_{1}) \cup x,\, X_{2} \cup Y_{2},\, y)
\end{displaymath}
of $E(M)$.
Since $y \in \cl(Y_{2})$ it follows from Proposition~\ref{guts}
that $y$ is in the closure of $(X_{1} \cap Y_{1}) \cup x$.
Thus if $\ra((X_{1} \cap Y_{1}) \cup \{x,\, y\}) \geq 3$ it follows
that $\ra((X_{1} \cap Y_{1}) \cup x) \geq 3$.
In this case
\begin{displaymath}
((X_{1} \cap Y_{1}) \cup x,\, X_{2} \cup Y_{2},\, y)
\end{displaymath}
is  vertical $3$\dash partition of $M$ that violates the
fact that $(X_{1},\, X_{2},\, x)$ is a minimal partition.
This completes the proof of the lemma.
\end{proof}

\begin{prop}
\label{prop8}
Suppose that $(X_{1},\, X_{2},\, x)$ is a minimal partition of the
$3$\dash connected matroid $M$ with respect to the set $A \subseteq E(M)$.
Assume that $(Y_{1},\, Y_{2},\, y)$ is a vertical $3$\dash partition
of $M$ such that $y \in A \cap X_{1}$ and $x \in Y_{1}$.
If $|X_{1} \cap Y_{2}| \geq 2$ then
\begin{displaymath}
\sqcap((X_{1} \cap Y_{1}) \cup \{x,\, y\},\, X_{1} \cap Y_{2}) =
\sqcap((X_{1} \cap Y_{1}) \cup y,\, X_{1} \cap Y_{2}) = 1.
\end{displaymath}
\end{prop}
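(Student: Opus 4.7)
The plan is to derive both equalities from a single rank identity, with inputs from parts~\eqref{air} and~\eqref{rank} of Lemma~\ref{crossetc}.

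Write $A_{1} = (X_{1} \cap Y_{1}) \cup y$, $A_{2} = (X_{1} \cap Y_{1}) \cup \{x,\,y\}$, and $B = X_{1} \cap Y_{2}$. Since $y \in X_{1}$ but $y \notin Y_{1} \cup Y_{2}$, and $x \in Y_{1}$, the sets $A_{i}$ and $B$ are disjoint, and one checks that $A_{1} \cup B = X_{1}$ and $A_{2} \cup B = X_{1} \cup x$. Because $(X_{1},\, X_{2},\, x)$ is a vertical $3$\dash partition, $x \in \cl(X_{1})$, so $\ra(A_{1} \cup B) = \ra(A_{2} \cup B) = \ra(X_{1})$. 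By Lemma~\ref{crossetc}\eqref{rank}, $\ra((X_{1} \cap Y_{2}) \cup y) = 2$, and since $|X_{1} \cap Y_{2}| \geq 2$ and $M$ has no parallel pairs (the vertical $3$\dash partitions force $|E(M)| \geq 7$), we conclude $\ra(B) = 2$ and $y \in \cl(B)$. The rank formula for local connectivity then gives
\[
\sqcap(A_{i},\, B) = \ra(A_{i}) + 2 - \ra(X_{1}) \qquad (i = 1,\, 2),
\]
so the target identities $\sqcap(A_{i},\, B) = 1$ reduce to showing $\ra(A_{i}) = \ra(X_{1}) - 1$.

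For the upper bound, I would observe that $A_{1},\, A_{2} \subseteq Y_{1} \cup y$, and hence $\cl(A_{i}) \subseteq \cl(Y_{1} \cup y) = \cl(Y_{1})$ using $y \in \cl(Y_{1})$. If $\ra(A_{i}) = \ra(X_{1}) = \ra(X_{1} \cup x)$, then $A_{i}$ would span $X_{1} \cup x$; in particular $X_{1} \cap Y_{2} \subseteq \cl(A_{i}) \subseteq \cl(Y_{1})$, contradicting Lemma~\ref{crossetc}\eqref{air}. Hence $\ra(A_{i}) \leq \ra(X_{1}) - 1$, giving $\sqcap(A_{i},\, B) \leq 1$. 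For the lower bound, note that $\{y\} \subseteq A_{i}$, so by the monotonicity of $\sqcap$ (a routine consequence of the submodularity of $\ra$ applied to $A_{i}$ and $B \cup y$),
\[
\sqcap(A_{i},\, B) \geq \sqcap(\{y\},\, B) = \ra(\{y\}) + \ra(B) - \ra(B \cup y) = 1 + 2 - 2 = 1,
\]
using that $y \in \cl(B)$ and that $y$ is not a loop of the $3$\dash connected matroid $M$. Combining the two bounds yields $\sqcap(A_{i},\, B) = 1$ for both $i$.

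The main conceptual step — which I would not call a serious obstacle — is spotting that the closures of $A_{1}$ and $A_{2}$ are trapped inside $\cl(Y_{1})$, because this is exactly where Lemma~\ref{crossetc}\eqref{air} forbids $X_{1} \cap Y_{2}$ from living. Once this is noticed, parts \eqref{air} and \eqref{rank} of that lemma deliver the upper and lower bounds respectively, and no further matroid surgery is required.
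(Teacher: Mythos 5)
Your proof is correct and uses the same essential ingredients as the paper — part~(iv) ("air") of Lemma~\ref{crossetc} supplies the upper bound and $y \in \cl(X_{1} \cap Y_{2})$ supplies the lower bound. The only difference is packaging: the paper bounds $\pi = \sqcap(\cdot,\cdot)$ directly, showing $1 \leq \pi \leq 2$ and ruling out $\pi = 2$ by observing that equality there would force $X_{1} \cap Y_{2} \subseteq \cl(Y_{1})$, whereas you reach the same conclusion by the rank identity $\sqcap(A_{i},B) = \ra(A_{i}) + 2 - \ra(X_{1})$ and showing $\ra(A_{i}) \leq \ra(X_{1}) - 1$; both routes invoke exactly the same facts.
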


\begin{proof}
The hypotheses imply that $|E(M)| \geq 4$, so every circuit or cocircuit
of $M$ contains at least three elements.
Let $\pi = \sqcap((X_{1} \cap Y_{1}) \cup \{x,\, y\},\, X_{1} \cap Y_{2})$.
We know from Lemma~\ref{crossetc}~\eqref{rank} that
$\ra(X_{1} \cap Y_{2}) \leq 2$.
Therefore $\pi \leq 2$.
On the other hand, since $|X_{1} \cap Y_{2}| \geq 2$, the fact that
$\ra((X_{1} \cap Y_{2}) \cup y) \leq 2$ implies that
$y \in \cl(X_{1} \cap Y_{2})$.
This in turn implies that $\pi \geq 1$.

Assume that $\pi = 2$.
Then $X_{1} \cap Y_{2} \subseteq \cl((X_{1} \cap Y_{1}) \cup \{x,\, y\})$.
Since $x,\, y \in \cl(Y_{1})$ this means that
$X_{1} \cap Y_{2} \subseteq \cl(Y_{1})$.
But this contradicts~\eqref{air} of Lemma~\ref{crossetc}.
Exactly the same argument shows that
$\sqcap((X_{1} \cap Y_{1}) \cup y,\, X_{1} \cap Y_{2}) = 1$.
\end{proof}

\begin{lemma}
\label{lem1}
Suppose that $(X_{1},\, X_{2},\, x)$ is a minimal partition of the
$3$\dash connected matroid $M$ with respect to the set $A \subseteq E(M)$.
Assume that $(Y_{1},\, Y_{2},\, y)$ is a vertical $3$\dash partition
of $M$ such that $y \in A \cap X_{1}$ and $x \in Y_{1}$.
If $|X_{1} \cap Y_{2}| \geq 2$ then $y \in \cl((X_{1} \cap Y_{1}) \cup x)$.
\end{lemma}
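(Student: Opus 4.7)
The plan is to extract sharp rank information from Proposition~\ref{prop8}, then case-split on $\ra(X_1 \cap Y_1)$ and on whether $x \in \cl(X_1 \cap Y_1)$, and finally rule out the one stubborn subcase by a local-connectivity computation.

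First I expand $\sqcap((X_1 \cap Y_1) \cup y,\, X_1 \cap Y_2) = 1$ and $\sqcap((X_1 \cap Y_1) \cup \{x,y\},\, X_1 \cap Y_2) = 1$ from Proposition~\ref{prop8}. Using $\ra(X_1 \cap Y_2) = 2$ (Lemma~\ref{crossetc}\eqref{rank}), together with the identities $(X_1 \cap Y_1) \cup y \cup (X_1 \cap Y_2) = X_1$ and $(X_1 \cap Y_1) \cup \{x,y\} \cup (X_1 \cap Y_2) = X_1 \cup x$, and $\ra(X_1 \cup x) = \ra(X_1)$ (since $x \in \cl(X_1)$), both equations simplify to
\begin{displaymath}
\ra((X_1 \cap Y_1) \cup y) = \ra((X_1 \cap Y_1) \cup \{x,y\}) = \ra(X_1) - 1.
\end{displaymath}
Thus $\ra(X_1 \cap Y_1) \in \{\ra(X_1) - 2,\, \ra(X_1) - 1\}$. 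If the rank equals $\ra(X_1) - 1$, then $y \in \cl(X_1 \cap Y_1) \subseteq \cl((X_1 \cap Y_1) \cup x)$ and the conclusion is immediate. If $\ra(X_1 \cap Y_1) = \ra(X_1) - 2$ but $x \notin \cl(X_1 \cap Y_1)$, then $\ra((X_1 \cap Y_1) \cup x) = \ra(X_1) - 1 = \ra((X_1 \cap Y_1) \cup \{x,y\})$, so $y \in \cl((X_1 \cap Y_1) \cup x)$ again.

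The main obstacle is the remaining subcase $\ra(X_1 \cap Y_1) = \ra(X_1) - 2$ with $x \in \cl(X_1 \cap Y_1)$, which I rule out by contradiction. Applying Proposition~\ref{flowers3} to the partition $(X_1 \cap Y_1,\, X_2 \cup x,\, (X_1 \cap Y_2) \cup y)$, and noting that $\la(X_2 \cup x) = \la(X_1) = 2$ and $\la((X_1 \cap Y_2) \cup y) = 2$ (the latter from Lemma~\ref{crossetc}\eqref{separating} together with $3$-connectedness, since both $(X_1 \cap Y_2) \cup y$ and its complement have at least three elements), one obtains
\begin{displaymath}
\sqcap(X_1 \cap Y_1,\, X_2 \cup x) = \sqcap(X_1 \cap Y_1,\, (X_1 \cap Y_2) \cup y) = \ra(X_1 \cap Y_1) + 2 - \ra(X_1) = 0.
\end{displaymath}
Since $x \in \cl(X_2)$, appending $x$ to $X_2$ changes neither $\ra(X_2)$ nor $\ra((X_1 \cap Y_1) \cup X_2)$, so also $\sqcap(X_1 \cap Y_1,\, X_2) = 0$. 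A standard submodularity argument shows that in any loopless matroid $\sqcap(P,\, Q) = 0$ forces $\cl(P) \cap \cl(Q) = \emptyset$; since $M$ is $3$-connected with at least four elements it is loopless, hence $\cl(X_1 \cap Y_1) \cap \cl(X_2) = \emptyset$. This contradicts $x \in \cl(X_1 \cap Y_1) \cap \cl(X_2)$, completing the proof.
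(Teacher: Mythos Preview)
Your proof is correct and takes a genuinely different route from the paper's. The paper argues by contradiction: assuming $y \notin \cl((X_{1}\cap Y_{1}) \cup x)$, it shows $\la((X_{1}\cap Y_{1}) \cup x) = 2$, obtains the exact $3$\dash partition $((X_{1}\cap Y_{1}) \cup x,\, (X_{1}\cap Y_{2}) \cup y,\, X_{2})$, and then invokes Corollary~\ref{flowers1} to equate all pairwise local connectivities; comparing with Proposition~\ref{prop8} forces $y \in \cl((X_{1}\cap Y_{1}) \cup x)$ after all. You instead expand both equalities of Proposition~\ref{prop8} directly into the rank identities $\ra((X_{1}\cap Y_{1}) \cup y) = \ra((X_{1}\cap Y_{1}) \cup \{x,y\}) = \ra(X_{1}) - 1$, and then dispatch all but one subcase immediately; the residual subcase ($x \in \cl(X_{1}\cap Y_{1})$ with $\ra(X_{1}\cap Y_{1}) = \ra(X_{1}) - 2$) is killed by a single application of Proposition~\ref{flowers3} and the observation that $\sqcap(P,Q) = 0$ forces $\cl(P) \cap \cl(Q)$ to contain only loops. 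Your argument is somewhat more elementary in that it bypasses the need to exhibit an exact $3$\dash partition and to invoke Corollary~\ref{flowers1}, and it yields the explicit rank information $\ra((X_{1}\cap Y_{1}) \cup \{x,y\}) = \ra(X_{1}) - 1$ along the way. One minor remark: when you cite Lemma~\ref{crossetc}\eqref{rank} for $\ra(X_{1}\cap Y_{2}) = 2$, that lemma literally gives $\ra((X_{1}\cap Y_{2}) \cup y) = 2$; the reduction to $\ra(X_{1}\cap Y_{2}) = 2$ uses that $|X_{1}\cap Y_{2}| \geq 2$ and $M$ is simple, which you might state explicitly.
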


\begin{proof}
The hypotheses imply that every circuit of $M$ contains at least
three elements.
Since $|X_{1} \cap Y_{2}| \geq 2$ it follows from
Lemma~\ref{crossetc}~\eqref{rank} implies that
$y \in \cl(X_{1} \cap Y_{2})$.
We assume that $y \notin \cl((X_{1} \cap Y_{1}) \cup x)$.
Since $X_{1} \cap Y_{1}$ is non-empty by
Lemma~\ref{crossetc}~\eqref{cross} it follows that
$|(X_{1} \cap Y_{1}) \cup x| \geq 2$, so
$\la((X_{1} \cap Y_{1}) \cup x) \geq 2$.
Furthermore $\la((X_{1} \cap Y_{1}) \cup \{x,\, y\}) \leq 3$
by~\eqref{separating2} of Lemma~\ref{crossetc}.
As $y \in \cl(Y_{2})$ we deduce that
\begin{displaymath}
2 \leq \la((X_{1} \cap Y_{1}) \cup x) <
\la((X_{1} \cap Y_{1}) \cup \{x,\, y\}) \leq 3.
\end{displaymath}
Thus $\la((X_{1} \cap Y_{1}) \cup x) = 2$.
Moreover it follows from~\eqref{separating} in
Lemma~\ref{crossetc} that $\la((X_{1} \cap Y_{2}) \cup y) = 2$.
Therefore
\begin{displaymath}
((X_{1} \cap Y_{1}) \cup x,\, (X_{1} \cap Y_{2}) \cup y,\, X_{2})
\end{displaymath}
is an exact $3$\dash partition.

As $x \in \cl(X_{2})$ it follows that
$\sqcap((X_{1} \cap Y_{1}) \cup x,\, X_{2}) \geq 1$.
Now Corollary~\ref{flowers1} implies that
$\sqcap((X_{1} \cap Y_{2}) \cup y,\, X_{2}) \geq 1$.
But~\eqref{air} and~\eqref{rank} of Lemma~\ref{crossetc} imply that
$X_{1} \cap Y_{2} \nsubseteq \cl(X_{2})$ and that
$\ra((X_{1} \cap Y_{2}) \cup y) = 2$.
We deduce that
$\sqcap((X_{1} \cap Y_{2}) \cup y,\, X_{2}) = 1$.
Again using Corollary~\ref{flowers1} we see that
\begin{displaymath}
\sqcap((X_{1} \cap Y_{1}) \cup x,\, (X_{1} \cap Y_{2}) \cup y) = 1.
\end{displaymath}
Proposition~\ref{prop8} tells us that
\begin{displaymath}
\sqcap((X_{1} \cap Y_{1}) \cup \{x,\, y\},\, X_{1} \cap Y_{2}) = 1.
\end{displaymath}
Since $y \in \cl(X_{1} \cap Y_{2})$ we can easily deduce that
$y \in \cl((X_{1} \cap Y_{1}) \cup x)$, contrary to our initial assumption.
\end{proof}

\begin{lemma}
\label{bigstep}
Suppose that $C^{*}$ is a cocircuit of the $3$\dash connected
matroid $M$.
Suppose that $(X_{1},\, X_{2},\, x)$ is a minimal partition
of $M$ with respect to $C^{*}$.
Assume that $\si(M / x_{0})$ is not $3$\dash connected for
any element $x_{0} \in C^{*} \cap X_{1}$.
Let $(Y_{1},\, Y_{2},\, y)$ be a vertical $3$\dash partition
of $M$ such that $y \in C^{*} \cap X_{1}$, and assume that
$x \in Y_{1}$.
Then $|X_{1} \cap Y_{2}| = 1$.
\end{lemma}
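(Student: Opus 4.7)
We proceed by contradiction, assuming $|X_1\cap Y_2|\ge 2$. The goal is to exhibit a vertical $3$-partition $(Z_1,Z_2,z')$ of $M$ with $z'\in C^*\cap(X_1\cup x)\setminus\{x\}$ and $Z_1\cap X_2=\emptyset$; Definition~\ref{dfn5}(ii) then contradicts the minimality of $(X_1,X_2,x)$.

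The first step is to apply Lemma~\ref{lem1} to obtain $y\in\cl((X_1\cap Y_1)\cup x)$. Combined with Lemma~\ref{crossetc}~(rank) and $|X_1\cap Y_2|\ge 2$, this places $y$ in the closure of each of the disjoint sets $(X_1\cap Y_1)\cup x$ and $X_1\cap Y_2$. The natural candidate is $((X_1\cap Y_1)\cup x,\,(X_1\cap Y_2)\cup X_2,\,y)$, whose closure condition is immediate. A submodularity argument on $X_1\cup x$ and $Y_1\cup y$, together with Lemma~\ref{crossetc}~(separating2), gives $\la((X_1\cap Y_1)\cup\{x,y\})\le 4-\la(X_2\cap Y_2)$, which via Lemma~\ref{crossetc}~(separating) delivers $\la((X_1\cap Y_1)\cup x)=2$; the complementary side $(X_1\cap Y_2)\cup X_2$ has rank at least $r(X_2)\ge 3$. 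Thus the only possible obstruction to this candidate being a vertical $3$-partition is the rank of $(X_1\cap Y_1)\cup x$.

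If $r((X_1\cap Y_1)\cup x)\ge 3$ we are done. Otherwise, Lemma~\ref{crossetc}~(rank2) forces $r((X_1\cap Y_1)\cup\{x,y\})=2$, and we are left with a rigid configuration: $F:=\cl((X_1\cap Y_1)\cup\{x,y\})$ and $F':=\cl((X_1\cap Y_2)\cup y)$ are rank-$2$ flats of $M$ meeting only at $y$ by Propositions~\ref{prop7} and~\ref{prop8}, while by Proposition~\ref{prop11} neither line extends into $X_2$. To force a contradiction we invoke the so far unused hypothesis that $\si(M/x_0)$ fails to be $3$-connected for every $x_0\in C^*\cap X_1$. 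The hyperplane $E\setminus C^*$ meets the rank-$2$ flat $F'$ in at most one element, and $y\in C^*\cap F'$, so $|(X_1\cap Y_2)\cap C^*|\ge |X_1\cap Y_2|-1\ge 1$. Choose $z\in(X_1\cap Y_2)\cap C^*$; the hypothesis then yields a vertical $3$-partition $(W_1,W_2,z)$ of $M$, and the minimality of $(X_1,X_2,x)$ forces both $W_1$ and $W_2$ to meet $X_2$. After relabeling, assume $x\in W_1$.

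The hard part will be to combine the derived segment structure through $y$ with the new partition $(W_1,W_2,z)$. Applying Lemma~\ref{crossetc} and Lemma~\ref{lem1} to the pair $((X_1,X_2,x),(W_1,W_2,z))$ produces rank-$2$ flat data through $z$ fully analogous to that which we obtained through $y$. Because $z\in F'\setminus F$ and $F\cap F'=\{y\}$, these new rank-$2$ flats cannot coincide with $F$ or $F'$, and a careful case analysis, again using submodularity together with Propositions~\ref{prop7} and~\ref{prop8}, on how $W_1$ and $W_2$ distribute the flats $F$, $F'$ relative to $X_2$ produces a vertical $3$-partition with middle element $z$ whose first side is contained in $X_1\cup x$. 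This is the required contradiction with Definition~\ref{dfn5}(ii), completing the proof.
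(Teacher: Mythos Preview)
Your overall plan---assume $|X_{1}\cap Y_{2}|\ge 2$, introduce a second
element $z\in C^{*}\cap(X_{1}\cap Y_{2})$, and study the new vertical
$3$\dash partition---coincides with the paper's. The execution, however,
has a real gap.

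First, the ``easy case'' you set up is vacuous. You claim a submodularity
argument gives $\la((X_{1}\cap Y_{1})\cup x)=2$ and then split into
$r((X_{1}\cap Y_{1})\cup x)\ge 3$ versus $r=2$. But since
$y\in\cl((X_{1}\cap Y_{1})\cup x)$ and $y\in\cl(X_{1}\cap Y_{2})$, one
has
$\la((X_{1}\cap Y_{1})\cup x)=\la((X_{1}\cap Y_{1})\cup\{x,y\})$.
Thus whenever $\la((X_{1}\cap Y_{1})\cup x)\le 2$,
Lemma~\ref{crossetc}\eqref{rank2} immediately forces
$r((X_{1}\cap Y_{1})\cup\{x,y\})=2$, hence
$r((X_{1}\cap Y_{1})\cup x)=2$. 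So the branch
$r((X_{1}\cap Y_{1})\cup x)\ge 3$ never occurs once the $3$\dash separating
condition holds; your candidate partition
$((X_{1}\cap Y_{1})\cup x,\,(X_{1}\cap Y_{2})\cup X_{2},\,y)$ is never a
vertical $3$\dash partition. (Separately, to get
$\la(X_{2}\cap Y_{2})=2$ from submodularity you need
$|X_{2}\cap Y_{2}|\ge 2$, which is not given by
Lemma~\ref{crossetc}\eqref{cross} alone.)

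Second, and more seriously, your ``hard case'' is only a sketch, and the
strategy you outline---produce a vertical $3$\dash partition with middle
element $z$ and first side contained in $X_{1}\cup x$---runs into the
\emph{same} obstruction. Applying Lemma~\ref{crossetc}\eqref{rank} and
\eqref{rank2} to $(W_{1},W_{2},z)$ forces
$r((X_{1}\cap W_{2})\cup z)=2$ and, when $3$\dash separating,
$r((X_{1}\cap W_{1})\cup\{x,z\})=2$; so any candidate side inside
$X_{1}\cup x$ again has rank at most two. No amount of ``careful case
analysis'' with these tools produces the vertical $3$\dash partition you
promise.

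The paper does \emph{not} aim for a vertical $3$\dash partition
contradicting minimality at this stage. Instead, after arranging (by a
two-step case analysis) that
$(X_{1}\cap Y_{2})\cup y\subseteq Z_{1}\cup z$ and
$|X_{1}\cap Z_{2}|\le 1$, it uses
$|X_{2}\cap Z_{2}|\ge 2$ and submodularity to get
$\la((X_{1}\cap Z_{1})\cup\{x,z\})\le 2$, hence
$r((X_{1}\cap Z_{1})\cup\{x,z\})=2$ by
Lemma~\ref{crossetc}\eqref{rank2}. Since
$X_{1}\cap Y_{2}\subseteq(X_{1}\cap Z_{1})\cup z$ and
$|X_{1}\cap Y_{2}|\ge 2$, this yields the key fact
$x\in\cl(X_{1}\cap Y_{2})$. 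Now both $x$ and $y$ lie in
$\cl(X_{1}\cap Y_{2})$, while Proposition~\ref{prop8} gives
$\sqcap((X_{1}\cap Y_{1})\cup\{x,y\},\,X_{1}\cap Y_{2})=1$; by
Proposition~\ref{prop7} this forces $r(\{x,y\})\le 1$, contradicting
$3$\dash connectivity. That local-connectivity contradiction is the
missing idea in your write-up.
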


\begin{proof}
The hypotheses of the lemma imply that every circuit and cocircuit
of $M$ contains at least three elements.
Let us assume that the lemma fails, so that
$|X_{1} \cap Y_{2}| \geq 2$.
Now~\eqref{rank} of Lemma~\ref{crossetc} implies that
$(X_{1} \cap Y_{2}) \cup y$ contains a triangle of $M$ that contains
$y$.
Since $C^{*}$ meets this triangle in $y$, there must be an element
$z \in X_{1} \cap Y_{2}$ such that $z \in C^{*}$.

By assumption $\si(M / z)$ is not $3$\dash connected so
Proposition~\ref{contr2} implies that there is
vertical $3$\dash partition $(Z_{1}',\, Z_{2}',\, z)$.
Let us assume that $x \in Z_{1}'$.

Suppose that $y \in Z_{i}'$, where $\{i,\, j\} = \{1,\, 2\}$.
Since $\ra((X_{1} \cap Y_{2}) \cup y) = 2$ and $z \in \cl(Z_{i}')$
it follows that $(X_{1} \cap Y_{2}) \cup y \subseteq \cl(Z_{i}')$, as
$y \ne z$ and $z \in X_{1} \cap Y_{2}$.
Let $Z_{i} = Z_{i}' \cup (X_{1} \cap Y_{2}) \cup y$ and let
$Z_{j} = Z_{j}' - Z_{i}$.
Then Proposition~\ref{vertcl} implies that
$(Z_{1},\, Z_{2},\, z)$ is a vertical $3$\dash partition.
Note that $x \in Z_{1}$, whether $i$ is equal to $1$ or $2$.

Suppose that $i = 2$.
Then $(X_{1} \cap Y_{2}) \cup y \subseteq Z_{2} \cup z$.
This means that
$(X_{1} \cap Z_{1}) \cup x \subseteq (X_{1} \cap Y_{1}) \cup \{x,\, y\}$.
Lemma~\ref{lem1} says that $z \in \cl((X_{1} \cap Z_{1}) \cup x)$.
Therefore $z \in \cl((X_{1} \cap Y_{1}) \cup \{x,\, y\})$.
But since $\{y,\, z\}$ spans $(X_{1} \cap Y_{2}) \cup y$ this implies
that $(X_{1} \cap Y_{1}) \cup \{x,\, y\}$ spans
$X_{1} \cap Y_{2}$.
As $x,\, y \in \cl(Y_{1})$ it now follows that $Y_{1}$ spans
$X_{1} \cap Y_{2}$, in contradiction to
Lemma~\ref{crossetc}~\eqref{air}.
Therefore $i = 1$, so $(X_{1} \cap Y_{2}) \cup y \subseteq Z_{1} \cup z$.

We conclude that $X_{1} \cap Z_{2} \subseteq (X_{1} \cap Y_{1}) \cup \{x,\, y\}$.
Suppose that $|X_{1} \cap Z_{2}| \geq 2$.
It follows from~\eqref{rank} of Lemma~\ref{crossetc} that
$\ra((X_{1} \cap Z_{2}) \cup z) = 2$.
Therefore $z$ is in $\cl(X_{1} \cap Z_{2})$, and hence in
$\cl((X_{1} \cap Y_{1}) \cup \{x,\, y\})$.
Exactly as before, we conclude that $Y_{1}$ spans
$X_{1} \cap Y_{2}$, a contradiction.
Therefore $|X_{1} \cap Z_{2}| \leq 1$.

As $\ra(Z_{2}) \geq 3$ we deduce that $|X_{2} \cap Z_{2}| \geq 2$.
But $\la(X_{2} \cap Z_{2}) \leq 2$ by~\eqref{separating} of
Lemma~\ref{crossetc}, so it follows that
$\la(X_{2} \cap Z_{2}) = 2$, and hence
$\la(X_{1} \cup Z_{1}) = 2$.
Now $\la(X_{1} \cup x) + \la(Z_{1} \cup z) = 4$, so
the submodularity of the connectivity function implies that
\begin{displaymath}
\la((X_{1} \cap Z_{1}) \cup \{x,\, z\}) +
\la(X_{1} \cup Z_{1}) \leq 4.
\end{displaymath}
We now conclude that
$\la((X_{1} \cap Z_{1}) \cup \{x,\, z\}) \leq 2$.
It follows from~\eqref{rank2} of Lemma~\ref{crossetc} that
$\ra((X_{1} \cap Z_{1}) \cup \{x,\, z\}) = 2$.

We have already deduced that
$(X_{1} \cap Y_{2}) \cup y \subseteq Z_{1} \cup z$, so
$X_{1} \cap Y_{2} \subseteq (X_{1} \cap Z_{1}) \cup z$.
But $|X_{1} \cap Y_{2}| \geq 2$, and
$\ra((X_{1} \cap Z_{1}) \cup \{x,\, z\}) = 2$.
Therefore $x \in \cl(X_{1} \cap Y_{2})$.
We also know that $y \in \cl(X_{1} \cap Y_{2})$.
Proposition~\ref{prop8} asserts that
\begin{displaymath}
\sqcap((X_{1} \cap Y_{1}) \cup \{x,\, y\},\, X_{1} \cap Y_{2}) = 1.
\end{displaymath}
Since $x,\, y \in \cl(X_{1} \cap Y_{2})$ it follows from
Proposition~\ref{prop7} that $\ra(\{x,\, y\}) \leq 1$,
a contradiction as $M$ is $3$\dash connected.
This completes the proof of the lemma.
\end{proof}

\section{Proof of the main result}
\label{mainproof}

We restate Theorem~\ref{main} here.

\begin{theorem}
\label{bigstep2}
Suppose that $M$ and $N$ are $3$\dash connected matroids such that
$|E(N)| \geq 4$ and $C^{*}$ is a cocircuit of $M$ with the property
that $M / x_{0}$ has an $N$\dash minor for some $x_{0} \in C^{*}$.
Then either:
\begin{enumerate}[(i)]
\item there is an element $x \in C^{*}$ such that $\si(M / x)$ is
$3$\dash connected and has an $N$\dash minor;
\item there is a four-element fan $(x_{1},\, x_{2},\, x_{3},\, x_{4})$
of $M$ such that $x_{1},\, x_{3} \in C^{*}$, and $\si(M / x_{2})$ is
$3$\dash connected with an $N$\dash minor;
\item there is a segment-cosegment pair $(L,\, L^{*})$ such that
$L \subseteq C^{*}$, and $\cl(L) - L$ contains a single element $e$.
In this case $e \notin C^{*}$ and $\si(M / e)$ is $3$\dash connected
with an $N$\dash minor.
Moreover $M / \cl(L)$ is $3$\dash connected with an $N$\dash minor,
and if $x_{i} \in L$ then $M / x_{i}$ is $3$\dash connected up
to a unique spore $(\cl(L) - x_{i},\, y_{i})$; or,
\item there is a segment-cosegment pair $(L,\, L^{*})$ such that
$L$ is a flat and $|L - C^{*}| \leq 1$.
In this case $M / L$ is $3$\dash connected with an $N$\dash minor,
and if $x_{i} \in L$ then $M / x_{i}$ is $3$\dash connected up to
a unique spore $(L - x_{i},\, y_{i})$.
\end{enumerate}
\end{theorem}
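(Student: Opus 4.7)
The plan is to suppose that (i) fails and extract one of (ii)--(iv). I begin by establishing a minimal partition adapted to $C^*$. Since $M/x_0$ has an $N$\dash minor and $\si(M/x_0)$ cannot be $3$\dash connected (otherwise (i) holds), Proposition~\ref{contr2} yields a vertical $3$\dash partition $(Z_1,Z_2,x_0)$. Proposition~\ref{prop5} allows me to assume $|E(N)\cap Z_1|\le 1$, and Lemma~\ref{smallside} then shows that $M/x_0/e$ has an $N$\dash minor for every $e\in Z_1-\cl(Z_2)$. Feeding this data into Lemma~\ref{minimal} with $A=C^*$ produces a minimal partition $(X_1,X_2,x)$ with $x\in C^*$, $X_1\subseteq Z_1-\cl(Z_2)$, and $M/x$ having an $N$\dash minor. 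Proposition~\ref{prop11} gives that $X_2\cup x$ is a flat, so $X_1\cap\cl(X_2)=\emptyset$; a second round of Proposition~\ref{prop5} and Lemma~\ref{smallside} arranges $|E(N)\cap X_1|\le 1$, whence $M/x/e$ has an $N$\dash minor for every $e\in X_1$, and Proposition~\ref{biglem} guarantees $C^*\cap X_1\ne\emptyset$.

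For each $y\in C^*\cap X_1$, $M/y$ inherits an $N$\dash minor, and the failure of (i) implies $\si(M/y)$ is not $3$\dash connected. Proposition~\ref{contr2} then supplies a vertical $3$\dash partition $(Y_1,Y_2,y)$; relabel so $x\in Y_1$. Lemma~\ref{bigstep} forces $|X_1\cap Y_2|=1$; write $X_1\cap Y_2=\{z_y\}$. Since $y\in\cl(Y_2)$ there is a circuit $C_y\subseteq Y_2\cup y$ through $y$; and since $X_2\cup x$ is a flat not containing $y$, the circuit $C_y$ cannot lie in $X_2\cup y$, so $z_y\in C_y$. Orthogonality of $C_y$ with $C^*$ then produces a second element of $C_y$ in $C^*$, namely $z_y$ itself or some element of $C^*\cap X_2\cap Y_2$. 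This is the local triangle/cocircuit data from which the fan or the segment will be reconstructed.

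The proof then splits on $|C^*\cap X_1|$. If it equals $1$, the circuit $C_y$ and the cocircuit $C^*$ assemble into a four\dash element fan $(x_1,x_2,x_3,x_4)$ with $x_1,x_3\in C^*$ and $x_2=y$; Bixby's Lemma~\ref{bixby}, combined with the fact that the only vertical $3$\dash separation of $M/y$ is the one already produced, shows that $\si(M/x_2)$ is $3$\dash connected with an $N$\dash minor, yielding (ii). If $|C^*\cap X_1|\ge 2$, I iterate the previous step for each $y\in C^*\cap X_1$: the rank conclusions of Lemma~\ref{crossetc} applied to each vertical partition, together with comparison across partitions, force $C^*\cap X_1$ and the associated $z_y$'s into a common rank\dash two flat which is the segment $L$. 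The cocircuits supplied by the various $(Y_1,Y_2,y)$ then play the role of $L^*$ in Definition~\ref{dfn3}. Case (iii) arises when $\cl(L)-L$ is a singleton $\{e\}$ (the element $x$ from our minimal partition turns out to be this $e$), and (iv) arises when $\cl(L)=L$; the supplementary statements in both cases follow from Proposition~\ref{prop10}, Proposition~\ref{prop4}, and Lemma~\ref{crocspore}.

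The hardest step will be the assembly argument in the segment case: showing that the $z_y$'s collected from distinct $y\in C^*\cap X_1$ all lie in a single rank\dash two flat together with $C^*\cap X_1$, and that the cocircuits produced by the corresponding vertical partitions fit the pattern of Definition~\ref{dfn3}. This will rest on the closure and local\dash connectivity identities in Lemma~\ref{crossetc}, Proposition~\ref{prop8}, and Lemma~\ref{lem1}, on Proposition~\ref{prop7} to preclude the segment collapsing into a parallel pair, and on repeatedly applying orthogonality between the circuits $C_y$ produced at one stage and the cocircuits produced at another.
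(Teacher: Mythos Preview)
Your setup through the minimal partition $(X_1,X_2,x)$ and the use of Lemma~\ref{bigstep} to get $|X_1\cap Y_2|=1$ matches the paper. After that, however, the structure you describe is not the one that actually occurs, and several identifications are impossible as stated.

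In the fan case you set $x_2=y$, but $y\in C^*\cap X_1$ and you have already assumed $\si(M/y)$ is not $3$\dash connected; so you cannot conclude that $\si(M/x_2)$ is $3$\dash connected, and Bixby's Lemma applied to $y$ says nothing about this. In the paper the fan arises when $|X_1\cap Y_1|=1$ (so $|X_1|=3$): then $X_1$ itself is the triad, $(X_1\cap Y_1)\cup\{x,y\}$ is the triangle, and $x_2$ is the single element of $X_1\cap Y_1$, which need not lie in $C^*$. Even then one cannot simply assert that $\si(M/x_2)$ is $3$\dash connected; the paper must analyse the subcase where it is not, and that analysis (via further triads containing $x_3$) produces a small segment--cosegment pair and lands in~(iv). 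Your dichotomy on $|C^*\cap X_1|$ does not capture this.

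In the segment case the picture is inverted. The elements $z_y$ are not varying ingredients of $L$: the paper proves (its Sublemma~\ref{sub5}) that $z_y$ is the \emph{same} element for every $y\in C^*\cap X_1$, and this common element is the one point of $X_1$ \emph{excluded} from the segment; it becomes $y_1\in L^*$. The segment is $\cl(L)=(X_1\cap Y_1)\cup\{x,y\}$, and $L=\cl(L)\cap C^*$. Since $x\in C^*$ by construction of the minimal partition, $x$ is always in $L$; hence your claim that the extra element $e$ in case~(iii) is $x$ contradicts $e\notin C^*$. Finally, to build $L^*$ one needs the cocircuits $(\cl(L)-x_i)\cup y_i$ with $y_i\in X_2$, and this requires the nontrivial fact $|X_2\cap Y_1|=1$ (the paper's Sublemma~\ref{sub4}), which your outline omits entirely.
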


\begin{proof}
Assume that $M$ is a counterexample to the theorem.
Let $x_{0}$ be an element of $C^{*}$ such that $N$ is a minor of
$M / x_{0}$.
By hypothesis $\si(M / x_{0})$ is not $3$\dash connected, so
Proposition~\ref{contr2} implies there is a vertical $3$\dash partition
$(Z_{1},\, Z_{2},\, x_{0})$.
It follows easily that $|E(M)| \geq 7$.
By Proposition~\ref{prop5} we will assume, relabeling as necessary,
that $|E(N) \cap Z_{1}| \leq 1$.
Let $Z = Z_{1} - \cl(Z_{2})$.
Lemma~\ref{smallside} implies that $M / e$ has an $N$\dash minor for
every element $e \in Z$, and Lemma~\ref{minimal} implies that there
is a minimal partition $(X_{1},\, X_{2},\, x)$ with respect to
$C^{*}$ such that $x \in C^{*} \cap (Z \cup x_{0})$, and
$X_{1} \subseteq Z$.

Proposition~\ref{biglem} implies that $C^{*}$ has a non-empty
intersection with $X_{1} - \cl(X_{2})$.
If $s \in C^{*} \cap (X_{1} - \cl(X_{2}))$ then $\si(M / s)$ is
not $3$\dash connected by hypothesis.
Therefore there is a vertical $3$\dash partition
$(S_{1},\, S_{2},\, s)$.

\begin{sublemma}
\label{sub1}
Suppose that $s \in C^{*}$ is contained in $X_{1} - \cl(X_{2})$ and
that $(S_{1},\, S_{2},\, s)$ is a vertical $3$\dash partition such
that $x \in S_{1}$.
Then $|X_{1} \cap S_{1}| \geq 2$ and $(X_{1} \cap S_{1}) \cup \{s,\, x\}$
is a segment of $M$.
\end{sublemma}

\begin{proof}
Lemma~\ref{bigstep} tells us that $|X_{1} \cap S_{2}| = 1$.
By Lemma~\ref{crossetc}~\eqref{cross} we know that
$|X_{1} \cap S_{1}| \geq 1$.
Assume that $|X_{1} \cap S_{1}| = 1$.
Then $X_{1}$ contains exactly three elements: the unique element
in $X_{1} \cap S_{2}$, the unique element in $X_{1} \cap S_{1}$,
and $s$.
By the definition of a vertical $3$\dash partition it follows
that $\ra(X_{1}) = 3$ and that $X_{1}$ is a triad of $M$.
As $x \in \cl(X_{1})$ it follows that there is a circuit
$C \subseteq X_{1} \cup x$ that contains $x$.
It cannot be the case that the single element in $X_{1} \cap S_{2}$
is in $C$, for that would imply that
$X_{1} \cap S_{2} \subseteq \cl(S_{1})$,
contradicting Lemma~\ref{crossetc}~\eqref{air}.
As $C$ does not meet the triad $X_{1}$ in a single element
it follows that $(X_{1} \cap S_{1}) \cup \{x,\, s\}$ is a triangle.

If we let $x_{2}$ be the unique element in $X_{1} \cap S_{1}$,
let $x_{4}$ be the unique element in $X_{1} \cap S_{2}$, and
let $x_{1} = x$ and $x_{3} = s$, then
$(x_{1},\, x_{2},\, x_{3},\, x_{4})$ is a four-element fan of $M$.
If $\si(M / x_{2})$ is $3$\dash connected then statement~(ii)
of Theorem~\ref{bigstep2} holds, which is a contradiction as
$M$ is a counterexample to the theorem.
Therefore we will assume that $\si(M / x_{2})$ is not
$3$\dash connected.

Since $\si(M / x_{3})$ is not $3$\dash connected
Theorem~\ref{bixby} asserts that $\co(M \del x_{3})$ is
$3$\dash connected.
Assume that every triad of $M$ that contains $x_{3}$ also
contains $x_{2}$.
Then $\co(M \del x_{3}) \iso M \del x_{3} / x_{2}$.
However $x_{3}$ is contained in a parallel pair in $M / x_{2}$,
so $\si(M / x_{2})$ is obtained from $M \del x_{3} / x_{2}$
by possibly deleting parallel elements.
As $M \del x_{3} / x_{2}$ is $3$\dash connected it follows
that $\si(M / x_{2})$ is $3$\dash connected, contrary to
hypothesis.

Therefore there is a triad $T^{*}$ of $M$ that contains $x_{3}$
but not $x_{2}$.
Now $T^{*}$ cannot meet the triangle $\{x_{1},\, x_{2},\, x_{3}\}$
in exactly one element, and therefore $x_{1} \in T^{*}$.
Let $y_{2}$ be the unique element in $T^{*} - \{x_{1},\, x_{3}\}$.
Since every triad that contains $x_{3}$ must contain either
$x_{1}$ or $x_{2}$, and since both $\{x_{1},\, x_{3}\}$ and
$\{x_{2},\, x_{3}\}$ are contained in triads of $M$ it follows
that $\co(M \del x_{3}) \iso M \del x_{3} / x_{1} / x_{2}$.
Note that $x_{3}$ is a loop of $M / x_{1} / x_{2}$, so
$M \del x_{3} / x_{1} / x_{2} = M / x_{3} / x_{1} / x_{2}$.

As $\si(M / x_{3})$ is not $3$\dash connected there is a
vertical $3$\dash partition $(Z_{1},\, Z_{2},\, x_{3})$
of $M$.
By relabeling as necessary we may assume that $x_{1} \in Z_{2}$.
Hence $x_{2} \in \cl(Z_{2} \cup x_{3})$, so by Proposition~\ref{vertcl}
we may assume that $x_{2} \in Z_{2}$.
Now $(Z_{1},\, Z_{2})$ is an exact $2$\dash separation of
$M / x_{3}$, but $M / x_{3} / x_{1} / x_{2}$ is $3$\dash connected.
By Proposition~\ref{prop1} we see that $Z_{2} - \{x_{1},\, x_{2}\}$
must contain at most one element.
If $Z_{2} = \{x_{1},\, x_{2}\}$ then $\ra(Z_{2}) \leq 2$, a
contradiction.
Therefore $Z_{2} - \{x_{1},\, x_{2}\}$ contains exactly one element.
Let this element be $y_{3}$.
It is easy to see that $Z_{2}$ must be a triad of $M$.

We relabel $x_{4}$ with $y_{1}$.
Let $L = \{x_{1},\, x_{2},\, x_{3}\}$ and let
$L^{*} = \{y_{1},\, y_{2},\, y_{3}\}$.
Now $L$ is a segment of $M$.
Proposition~\ref{prop11} implies $X_{2} \cup x_{1}$ is a
hyperplane, and as $\{x_{1},\, x_{2},\, x_{3}\}$
is a triangle it is easy to see that
$\sqcap(X_{2} \cup x_{1},\, \{x_{2},\, x_{3}\}) = 1$.
If there were some element $e$ in $\cl(L) - L$ then
Proposition~\ref{prop7} would imply that
$\ra(\{e,\, x_{1}\}) \leq 1$, a contradiction.
Therefore $L$ is a flat of $M$.
Moreover $(L - x_{i}) \cup y_{i}$ is a cocircuit of
$M$ for all $i \in \{1,\, 2,\, 3\}$, so $(L,\, L^{*})$ is a
segment-cosegment pair of $M$.

By applying Proposition~\ref{prop10} and Lemma~\ref{crocspore}
we see that $M / L$ is $3$\dash connected, and that
$M / x_{i}$ is $3$\dash connected up to a unique spore
$(L - x_{i},\, y_{i})$ for all $i \in \{1,\, 2,\, 3\}$.
We know that $M / x_{3}$ has an $N$\dash minor.
However $\{x_{1},\, x_{2}\}$ is a parallel pair in $M / x_{3}$,
so $M / x_{3} \del x_{1}$ has an $N$\dash minor.
Furthermore $\{x_{2},\,y_{3}\}$ is a series pair of
$M / x_{3} \del x_{1}$, so $M / x_{3} \del x_{1} / x_{2}$, and hence
$M / L$, has an $N$\dash minor.
Thus statement~(iv) of Theorem~\ref{bigstep2} holds, a contradiction.
We conclude that $|X_{1} \cap S_{1}| \geq 2$.

Since $\la(X_{1} \cup x) = \la(S_{1} \cup s) = 2$
it follows that
\begin{displaymath}
\la((X_{1} \cap S_{1}) \cup \{s,\, x\}) + \la(X_{1} \cup S_{1}) \leq 4.
\end{displaymath}
Suppose that $\la((X_{1} \cap S_{1}) \cup \{s,\, x\}) \geq 3$.
Then $\la(X_{1} \cup S_{1}) \leq 1$, so
$\la(X_{2} \cap S_{2}) \leq 1$.
However, as $|X_{1} \cap S_{2}| = 1$ it follows that
$|X_{2} \cap S_{2}| \geq 2$, so $M$ contains a $2$\dash separation,
a contradiction.
Thus $\la((X_{1} \cap S_{1}) \cup \{s,\, x\}) \leq 2$ and
it follows from Lemma~\ref{crossetc}~\eqref{rank2} that
$(X_{1} \cap S_{1}) \cup \{s,\, x\}$ is a segment.
\end{proof}

\begin{sublemma}
\label{sub3}
The rank of $X_{1} \cup x$ is three.
Moreover, $X_{1}$ is a cocircuit of $M$.
\end{sublemma}

\begin{proof}
Let $s \in C^{*}$ be an element in $X_{1} - \cl(X_{2})$ and
suppose that $(S_{1},\, S_{2},\, s)$ is a vertical $3$\dash partition
such that $x \in S_{1}$.
Then $\ra((X_{1} \cap S_{1}) \cup \{s,\, x\}) = 2$ by~\ref{sub1},
and as $|X_{1} \cap S_{2}| = 1$, Lemma~\ref{crossetc}~\eqref{air}
implies that $\ra(X_{1} \cup x) = 3$.

Proposition~\ref{prop11} asserts that $X_{2} \cup x$ is a flat of $M$,
so $X_{1}$ is a cocircuit.
\end{proof}

\begin{sublemma}
\label{sub5}
Suppose that $y$ and $z$ are elements in $C^{*} \cap X_{1}$, and
$(Y_{1},\, Y_{2},\, y)$ and $(Z_{1},\, Z_{2},\, z)$ are
vertical $3$\dash partitions such that $x \in Y_{1} \cap Z_{1}$.
Then
\begin{displaymath}
|X_{1} \cap Y_{2}| = |X_{1} \cap Z_{2}| = 1 \quad \mbox{and} \quad
X_{1} \cap Y_{2} = X_{1} \cap Z_{2}.
\end{displaymath}
Moreover
\begin{displaymath}
(X_{1} \cap Y_{1}) \cup \{x,\, y\} = (X_{1} \cap Z_{1}) \cup \{x,\, z\}.
\end{displaymath}
\end{sublemma}

\begin{proof}
Let $x'$ be the unique element in $X_{1} \cap Y_{2}$.
From~\ref{sub1} we see that $(X_{1} \cap Y_{1}) \cup \{x,\, y\}$ is
a segment.
The only element of $X_{1}$ not in $(X_{1} \cap Y_{1}) \cup \{x,\, y\}$
is $x'$.
It cannot be the case that $x' \in \cl((X_{1} \cap Y_{1}) \cup \{x,\, y\})$
by Lemma~\ref{crossetc}~\eqref{rank2}.
The same arguments shows that $(X_{1} \cap Z_{1}) \cup \{x,\, z\}$
is a segment, and the only element of $X_{1}$ not in this segment
is $x'$.
Now the result follows easily.
\end{proof}

\begin{sublemma}
\label{sub4}
Let $y \in C^{*}$ be an element in $X_{1}$ and suppose
that $(Y_{1},\, Y_{2},\, y)$ is a vertical
$3$\dash partition such that $x \in Y_{1}$.
Then $|X_{2} \cap Y_{1}| = 1$.
\end{sublemma}

\begin{proof}
We know by~\ref{sub1} that $(X_{1} \cap Y_{1}) \cup \{x,\, y\}$
is a segment.
Let $L' = (X_{1} \cap Y_{1}) \cup \{x,\, y\}$ and let $x'$
be the unique element in $X_{1} \cap Y_{2}$.
Since the complement of $C^{*}$ is a flat of $M$ which
does not contain the segment $L'$ it follows that at most one
element of $L'$ is not contained in $C^{*}$.
As $|X_{1} \cap Y_{1}| \geq 2$ we can find an element
$z \in (X_{1} \cap Y_{1}) \cap C^{*}$.
There must be a vertical $3$\dash partition $(Z_{1},\, Z_{2},\, z)$
such that $x \in Z_{1}$.
From~\ref{sub5} we see that the unique element in $X_{1} \cap Z_{2}$
is $x'$, and that
$(X_{1} \cap Z_{1}) \cup \{x,\, z\} = L'$.

Let $Y_{i}'$ and $Z_{i}'$ denote $X_{2} \cap Y_{i}$ and
$X_{2} \cap Z_{i}$ respectively for $i = 1,\, 2$.
As $(X_{1},\, X_{2},\, x)$ is a minimal partition it follows that
$Y_{i}'$ and $Z_{i}'$ are non-empty for all $i \in \{1,\, 2\}$.
Henceforth we will assume that $|Y_{1}'| > 1$ in order to obtain
a contradiction.

\begin{sublemma}
\label{sub2}
$x \in \cl(Y_{1}')$.
\end{sublemma}

\begin{proof}
We know that $\la(Y_{1}' \cup x) \leq 2$ by
Lemma~\ref{crossetc}~\eqref{separating}.
Since $|Y_{1}'| \geq 2$ it follows that
$\la(Y_{1}' \cup x) = 2$
and hence $\la(X_{1} \cup Y_{2}) = 2$.
Since $x \in \cl(X_{1} \cup Y_{2})$ it follows that
$\la(Y_{1}') = 2$, so Lemma~\ref{guts} implies
that $x \in \cl(Y_{1}')$.
\end{proof}

\begin{sublemma}
\label{sublem2}
Neither $Y_{1}' \cap Z_{1}'$ nor $Y_{2}' \cap Z_{2}'$ is empty.
\end{sublemma}

\begin{proof}
We know from~\ref{sub2} that $x \in \cl(Y_{1}')$.
Since $z \in \cl(Z_{2})$ but
$(X_{1} \cap Z_{1}) \nsubseteq \cl(Z_{2})$, we deduce that
$x \notin \cl(Z_{2})$ as $L'$ is a segment containing both
$x$ and $z$.
Thus $x \notin \cl(Z_{2}' \cup x')$.
Hence $Y_{1}' - Z_{2}' \ne \emptyset$
so $Y_{1}' \cap Z_{1}' \ne \emptyset$.

Note that $z$ is in the closure of
$Z_{2} = Z_{2}' \cup x'$, but
$z \notin \cl(Z_{2}')$ as $X_{1}$ is a cocircuit
by~\ref{sub3}.
This observation means that
$x' \in \cl(Z_{2}' \cup z)$.
However $z \in Y_{1}$, and $x' \notin \cl(Y_{1})$
by Lemma~\ref{crossetc}~\eqref{air}.
Thus $x' \notin \cl(Y_{1}' \cup z)$.
It follows that
$Z_{2}' - Y_{1}' \ne \emptyset$, so
$Z_{2}' \cap Y_{2}' \ne \emptyset$.
\end{proof}

\begin{sublemma}
\label{sublem3}
$(L' \cup (Y_{1}' \cap Z_{1}'), Y_{2} \cup Z_{2})$ is a
$3$\dash separation of $M$.
\end{sublemma}

\begin{proof}
Note that $\la(Y_{2}) = \la(Z_{2}) = 2$, so
$\la(Y_{2} \cap Z_{2}) + \la(Y_{2} \cup Z_{2}) \leq 4$.
From~\ref{sublem2} we see that $Y_{2}' \cap Z_{2}' \ne \emptyset$.
Moreover $x' \in (Y_{2} \cap Z_{2}) - (Y_{2}' \cap Z_{2}')$, which implies
that $|Y_{2} \cap Z_{2}| \geq 2$.
Thus $\la(Y_{2} \cap Z_{2}) \geq 2$, so $\la(Y_{2} \cup Z_{2}) \leq 2$.
As both $L' \cup (Y_{1}' \cap Z_{1}')$ and $Y_{2} \cup Z_{2}$ have cardinality
at least three the claim follows.
\end{proof}

Note that $y,\, z \in \cl(Y_{2} \cup Z_{2})$.
As $y$ and $z$ are contained in the segment $L'$ it follows that
$L' \subseteq \cl(Y_{2} \cup Z_{2})$.
If $|Y_{1}' \cap Z_{1}'| \geq 2$ then it must be the case that
$L' \subseteq \cl(Y_{1}' \cap Z_{1}')$, for otherwise
$(Y_{1}'\cap Z_{1}',\, (Y_{2} \cup Z_{2}) \cup L')$
is a $2$\dash separation of $M$.
But $L' \subseteq \cl(Y_{1}' \cap Z_{1}')$ implies that
$X_{1} \cap Y_{1} \subseteq \cl(X_{2})$, a contradiction.

Therefore $|Y_{1}' \cap Z_{1}'| \leq 1$.
We know from~\ref{sublem2} that $Y_{1}' \cap Z_{1}'$ is not
empty.
Let $e$ be the unique element in $Y_{1}' \cap Z_{1}'$.
Suppose that $e \in \cl(L')$.
As $X_{2} \cup x$ is a hyperplane and $L'$ is a segment
we see that $\sqcap(X_{2} \cup x,\, L' - x) = 1$.
As $e,\, x \in \cl(L' - x)$ it follows from Proposition~\ref{prop7}
that $\ra(\{e,\, x\}) \leq 1$.
We deduce from this contradiction that $e \notin \cl(L')$.

Hence $\ra(L' \cup e) = 3$, so
$\ra(Y_{2} \cup Z_{2}) = \ra(M) - 1$ by~\ref{sublem3}.
Thus the complement of $\cl(Y_{2} \cup Z_{2})$ is a
cocircuit.
However $L' \subseteq \cl(Y_{2} \cup Z_{2})$, so $e$ is
a coloop of $M$, a contradiction.

Our assumption that $|X_{2} \cap Y_{1}| \geq 2$ has lead to an
impossibility.
Since $X_{2} \cap Y_{1}$ is non-empty by
Lemma~\ref{crossetc}~\eqref{cross} we conclude that~\ref{sub4}
is true.
\end{proof}

Now we are in a position to complete the proof of
Theorem~\ref{bigstep2}.
Let $x_{1} = x$, and let $x_{2}$ be some element in
$C^{*} \cap X_{1}$.
There is a vertical $3$\dash partition $(Y_{1}^{2},\, Y_{2}^{2},\, x_{2})$
such that $x_{1} \in Y_{1}^{2}$.
Lemma~\ref{bigstep} tells us that $|X_{1} \cap Y_{2}^{2}| = 1$.
Let $y_{1}$ be the unique element in $X_{1} \cap Y_{2}^{2}$.

We know that $|X_{1} \cap Y_{1}^{2}| \geq 2$ and
$(X_{1} \cap Y_{1}^{2}) \cup \{x_{1},\, x_{2}\}$ is a segment
by~\ref{sub1}.
It follows from Proposition~\ref{prop7}, and the fact that
$(X_{1} \cap Y_{1}^{2}) \cup x_{2}$ is a segment while $X_{2} \cup x_{1}$
is a hyperplane, that $(X_{1} \cap Y_{1}^{2}) \cup \{x_{1},\, x_{2}\}$
is a flat.
The complement of $C^{*}$ can contain at most one element of
$(X_{1} \cap Y_{1}^{2}) \cup \{x_{1},\, x_{2}\}$.
Let $L = C^{*} \cap ((X_{1} \cap Y_{1}^{2}) \cup \{x_{1},\, x_{2}\})$.
Then $\cl(L) = (X_{1} \cap Y_{1}^{2}) \cup \{x_{1},\, x_{2}\}$, and
$\cl(L) - L$ contains at most one element.

Suppose that $L = \{x_{1},\ldots, x_{t}\}$.
We know that $t \geq 3$.
Let $i$ be a member of $\{2,\ldots, t\}$.
As $x_{i} \in C^{*}$ the fact that $M$ is a counterexample
to the theorem means that $\si(M / x_{i})$ is not $3$\dash connected,
so there is a vertical $3$\dash partition $(Y_{1}^{i},\, Y_{2}^{i},\, x_{i})$
such that $x_{1} \in Y_{1}^{i}$.
Then
\begin{displaymath}
(X_{1} \cap Y_{1}^{i}) \cup \{x_{1},\, x_{i}\} =
(X_{1} \cap Y_{1}^{2}) \cup \{x_{1},\, x_{2}\}
\end{displaymath}
by~\ref{sub5}, and~\ref{sub4} implies that there is a unique
element in $X_{2} \cap Y_{1}^{i}$.
Let $y_{i}$ be this element.

Define $L^{*}$ to be $\{y_{1},\ldots, y_{t}\}$.
Note that $L \cap L^{*} = \emptyset$.
We already know that $(\cl(L) - x_{1}) \cup y_{1} = X_{1}$ is
a cocircuit.
Suppose that $i \in \{2,\ldots, t\}$.
Then $(\cl(L) - x_{i}) \cup y_{i}$ is $Y_{1}^{i}$.
As $Y_{1}^{i}$ contains only one element that is not in the segment
$\cl(L)$ it follows that $\ra(Y_{1}^{i}) = 3$.
Thus $\ra(Y_{2}^{i} \cup x_{i}) = r(M) - 1$.
Furthermore $Y_{2}^{i} \cup x_{i}$ is a flat, for otherwise
the complement of $\cl(Y_{2}^{i} \cup x_{i})$ is a cocircuit
of rank at most two, which cannot occur since $M$
is $3$\dash connected.
Hence $(\cl(L) - x_{i}) \cup y_{i}$ is a cocircuit.

We have shown that $(L,\, L^{*})$ is a segment-cosegment pair.
Proposition~\ref{prop10} says that $M / \cl(L)$ is $3$\dash connected.
It is easy to see that the hypotheses of Lemma~\ref{crocspore} are
satisfied, so $M / x_{i}$ is $3$\dash connected up to
the unique spore $(\cl(L) - x_{i},\, y_{i})$, for all
$i \in \{1,\ldots, t\}$.
We know that $M / x_{2}$ has an $N$\dash minor, but as
$\cl(L) - x_{2}$ is a parallel class of $M / x_{2}$ it follows
that $M / x_{2} \del (\cl(L) - \{x_{1},\, x_{2}\})$ has an $N$\dash minor.
Since $\{x_{1},\, y_{2}\}$ is a series pair of
$M / x_{2} \del (\cl(L) - \{x_{1},\, x_{2}\})$ it follows that
$M / x_{2} \del (\cl(L) - \{x_{1},\, x_{2}\}) / x_{1}$, and hence
$M / \cl(L)$, has an $N$\dash minor.

Suppose that $|\cl(L) - C^{*}| = 0$.
Then $L = \cl(L)$, and statement~(iv) of Theorem~\ref{bigstep2}
holds.
Therefore we must assume that there is a single element
$e$ in $\cl(L) - L$.
Lemma~\ref{smallside} tells us that $M / e$ has an $N$\dash minor.
If $\si(M / e)$ is $3$\dash connected, then statement~(iii)
holds.
Therefore we must assume $\si(M / e)$ is not $3$\dash connected.

Let $x_{t+1} = e$.
There must be a vertical $3$\dash partition
$(Y_{1}^{t+1},\, Y_{2}^{t+1},\, x_{t+1})$.
We assume that $x_{1} \in Y_{1}^{t+1}$.
Since $\cl(Y_{1}^{t+1})$ contains $x_{1}$ and $x_{t+1}$ it
follows that $\cl(L) \subseteq \cl(Y_{1}^{t+1})$.
By Proposition~\ref{vertcl} we may assume that $Y_{1}^{t+1}$
contains $\cl(L) - x_{t+1} = L$.

As $X_{2} \cup x_{1}$ is a flat it follows that $x_{t+1} \notin \cl(X_{2})$.
However $x_{t+1} \in \cl(Y_{2}^{t+1})$, so
$X_{1} \cap Y_{2}^{t+1} \ne \emptyset$.
We know that $X_{1} = (L \cup \{x_{t+1},\, y_{1}\}) - x_{1}$,
and as $L \subseteq Y_{1}^{t+1}$ it follows that
$X_{1} \cap Y_{2}^{t+1} = \{y_{1}\}$.

Since $x_{t+1} \in \cl(Y_{2}^{t+1})$, there is a circuit
$C_{1} \subseteq Y_{2}^{t+1} \cup x_{t+1}$ such that
$x_{t+1} \in C_{1}$.
But $Y_{1}^{2} = (L \cup \{x_{t+1},\, y_{2}\}) - x_{2}$ is a
cocircuit of $M$ and $C_{1}$ must meet this cocircuit in more
than one element.
The only element of $Y_{1}^{2} - x_{t+1}$ that can be in $C_{1}$ is
$y_{2}$.
Thus $y_{2} \in Y_{2}^{t+1}$.

Since $(X_{1},\, X_{2},\, x)$ is a minimal partition it follows
that $X_{2} \cap Y_{1}^{t+1}$ is non-empty.
Assume that $|X_{2} \cap Y_{1}^{t+1}| \geq 2$.
As $\la(X_{1}) + \la(Y_{2}^{t+1} \cup x_{t+1}) = 4$, it follows
that
\begin{displaymath}
\la((X_{1} \cap Y_{2}^{t+1}) \cup x_{t+1}) +
\la(X_{1} \cup Y_{2}^{t+1}) \leq 4.
\end{displaymath}
Furthermore
$\la(X_{1} \cup x_{1}) + \la(Y_{2}^{t+1} \cup x_{t+1}) = 4$, so
\begin{displaymath}
\la((X_{1} \cap Y_{2}^{t+1}) \cup x_{t+1}) +
\la(X_{1} \cup Y_{2}^{t+1} \cup x_{1}) \leq 4.
\end{displaymath}
As $(X_{1} \cap Y_{2}^{t+1}) \cup x_{t+1} = \{x_{t+1},\,y_{1}\}$
we deduce that
$\la((X_{1} \cap Y_{2}^{t+1}) \cup x_{t+1}) = 2$.
Thus
\begin{equation}
\label{eq1}
\la(X_{1} \cup Y_{2}^{t+1}),\,
\la(X_{1} \cup Y_{2}^{t+1} \cup x_{1}) \leq 2.
\end{equation}
Both of the sets in Equation~\eqref{eq1} contain at least two elements,
and by assumption $|X_{2} \cap Y_{1}^{t+1}| \geq 2$.
Therefore $X_{2} \cap Y_{1}^{t+1}$ and $(X_{2} \cap Y_{1}^{t+1}) \cup x_{1}$
are exactly $3$\dash separating.
Since $x_{1} \in \cl(X_{1})$ we see from Lemma~\ref{guts} that
$x_{1} \in \cl(X_{2} \cap Y_{1}^{t+1})$.
Thus there is a circuit $C_{2} \subseteq (X_{2} \cap Y_{1}^{t+1}) \cup x_{1}$
such that $x_{1} \subseteq C_{2}$.
We have already noted that $Y_{1}^{2}$ is a cocircuit, and as
$x_{1} \in Y_{1}^{2}$ it follows that $|C_{2} \cap Y_{1}^{2}| \geq 2$.
As $C_{2} - x_{1} \subseteq X_{2}$ the only element other than
$x_{1}$ that can be in $C_{2} \cap Y_{1}^{2}$ is $y_{2}$.
Hence $y_{2} \in C_{2} \subseteq Y_{1}^{t+1}$, a contradiction as we have
already deduced that $y_{2} \in Y_{2}^{t+1}$.

We are forced to conclude that $X_{2} \cap Y_{1}^{t+1}$ contains
a unique element.
Let this element be $y_{t+1}$.
Therefore $Y_{1}^{t+1} = L \cup y_{t+1}$.
Thus $\ra(Y_{1}^{t+1}) = 3$, so $\ra(Y_{2}^{t+1}) = \ra(M) - 1$.
If $Y_{2}^{t+1} \cup x_{t+1}$ is not a hyperplane, then the complement
of $\cl(Y_{2}^{t+1} \cup x_{t+1})$ is a cocircuit of rank at most two,
a contradiction.
Therefore $(\cl(L) - x_{t+1}) \cup y_{t+1} = Y_{1}^{t+1}$ is a
cocircuit.

Let $L_{0} = \{x_{1},\ldots, x_{t+1}\}$ and let
$L_{0}^{*} = \{y_{1},\ldots, y_{t+1}\}$.
Note that $L_{0} = \cl(L)$, so $L_{0}$ is a flat.
We have shown that $(L_{0},\, L_{0}^{*})$ is a segment-cosegment
pair.
Moreover, $M / x_{t+1}$ is $3$\dash connected up to a unique
spore $(L_{0} - x_{t+1},\, y_{t+1})$, by Lemma~\ref{crocspore}.
By relabeling $L_{0}$ and $L_{0}^{*}$ as $L$ and $L^{*}$
respectively we see that statement~(iv) of Theorem~\ref{bigstep2}
holds.
Hence $M$ is not a counterexample, and this contradiction completes
the proof of Theorem~\ref{bigstep2}.
\end{proof}

\section{Acknowledgements}
\label{thanks}

We thank Geoff Whittle for suggesting the problem, and for valuable
discussions.

\end{document}